\newtheorem{thm}{Theorem}[section]
\newtheorem{lem}[thm]{Lemma}
\newtheorem{cor}[thm]{Corollary}
\newtheorem{pro}[thm]{Proposition}
\theoremstyle{definition}
\newtheorem{ex}[thm]{Example}
\newtheorem{rmk}[thm]{Remark}
\newtheorem{defi}[thm]{Definition}
\newcommand{\nc}{\newcommand}
\newcommand{\delete}[1]{}
\nc{\mlabel}[1]{\label{#1}}  
\nc{\mcite}[1]{\cite{#1}}  
\nc{\mref}[1]{\ref{#1}}  
\nc{\meqref}[1]{\eqref{#1}}  
\nc{\mbibitem}[1]{\bibitem{#1}} 
\nc{\mlabel}[1]{\label{#1}{\hfill \hspace{1cm}{\bf{{\ }\hfill(#1)}}}}
\nc{\mcite}[1]{\cite{#1}{{\bf{{\ }(#1)}}}}  
\nc{\mref}[1]{\ref{#1}{{\bf{{\ }(#1)}}}}  
\nc{\meqref}[1]{\eqref{#1}{{\bf{{\ }(#1)}}}}  
\nc{\mbibitem}[1]{\bibitem[\bf #1]{#1}} 
\newcommand {\emptycomment}[1]{}
\newcommand{\mg}[1]{\textcolor{red}{#1}}
\nc{\oprn}{\theta}
\nc{\calo}{\mathcal{O}}
\nc{\oop}{$\mathcal{O}$-operator\xspace}
\nc{\oops}{$\mathcal{O}$-operators\xspace}
\nc{\mrho}{{\bm{\varrho}}}
\nc{\bfk}{\mathbf{K}}
\nc{\invlim}{\displaystyle{\lim_{\longleftarrow}}\,}
\nc{\ot}{\otimes}
\nc{\eval}[1]{\Big|_{#1}}
\newcommand{\add}{\frka\frkd}
\newcommand{\lon }{\,\rightarrow\,}
\newcommand{\be }{\begin{equation}}
\newcommand{\ee }{\end{equation}}
\newcommand{\g}{\mathfrak g}
\newcommand{\h}{\mathfrak h}
\nc{\RR}{\mathbb{R}}
\nc{\CC}{\mathbb{C}}
\newcommand{\huaR}{\mathcal{R}}
\newcommand{\huaG}{\mathcal{G}}
\newcommand{\huaI}{\mathcal{I}}
\newcommand{\frka}{\mathfrak a}
\newcommand{\frkd}{\mathfrak d}
\newcommand{\frkG}{\mathfrak G}
\newcommand{\frkR}{\mathfrak R}
\newcommand{\Courant}[1]{\left\llbracket  #1\right\rrbracket }
\newcommand{\Id}{{\rm{Id}}}
\newcommand{\br}[1]{   [ \cdot,    \cdot  ]   }
\newcommand{\Der}{\mathrm{Der}}
\newcommand{\Ad}{\mathrm{Ad}}
\newcommand{\gl}{\mathfrak {gl}}
\newcommand{\ad}{\mathrm{ad}}
\nc{\CV}{\mathbf{C}}
\begin{document}

\title[Reynolds Lie bialgebras]{Reynolds Lie bialgebras}

\author{Shuai Hou}
\address{Department of Mathematics, Jilin University, Changchun 130012, Jilin, China}
\email{hshuaisun@jlu.edu.cn}

\author{Maxim Goncharov}
\address{Sobolev Institute of Mathematics
Acad. Koptyug ave. 4, 630090 Novosibirsk, Russia
Novosibirsk State University
Pirogova str. 2, 630090 Novosibirsk, Russia}
\email{gme@math.nsc.ru}


\begin{abstract}
In this paper, we establish a bialgebra theory for Reynolds Lie algebras. First we introduce the notion of a quadratic Reynolds Lie algebra and show that it induces an isomorphism from the adjoint representation to the coadjoint representation. Then we introduce the notion of matched pairs, Manin triples and bialgebras for Reynolds Lie algebras, and show that Manin triples, bialgebras and certain matched pairs of Reynolds Lie algebras are equivalent. In particular, we introduce the notion of a Reynolds operator on a quadratic Rota-Baxter Lie algebra which can induce a Reynolds Lie bialgebra naturally.
Finally, we introduce the notion of the classical Yang-Baxter equation in a Reynolds Lie algebra whose solutions give rise to Reynolds Lie bialgebras. We also introduce the notion of relative Rota-Baxter operators on a Reynolds Lie algebra and Reynolds pre-Lie algebras, and construct solutions of the classical Yang-Baxter equation in terms of relative Rota-Baxter operators and Reynolds pre-Lie algebras.
\end{abstract}


\keywords{Reynolds operator, Reynolds Lie bialgebra, Manin triple, classical Yang-Baxter equation\\
\quad  2020 {\it Mathematics Subject Classification.}  17B38,  17B62}

\maketitle

\tableofcontents

\section{Introduction}
The aim of this paper is to establish a bialgebra theory for Reynolds Lie algebras, and construct Reynolds Lie bialgebras by using (relative) Rota-Baxter operators and skew-symmetric solutions of the classical Yang-Baxter equation in Reynolds Lie algebras.

\subsection{Reynolds operators}
Reynolds operators originated from O. Reynolds' famous research on the theory of turbulence in fluid mechanics;
the Reynolds operator plays an important role in the Navier-Stokes equations \cite{Reynolds}.
 G. Birkhoff \cite{Birkhoff} and G. - C. Rota \cite{Rota}
 have also carried out a series of studies about Reynolds operators.
 In addition,  Reynolds operators are closely related to the probability theory
 developed by Kolmogorov, especially conditional expectation.
 In \cite{ZGG-1}, the authors studied the relevant properties of Reynolds operators
 on the associative algebra and the construction of free Reynolds algebras.
 Also, they introduced the concept of Reynolds words and verified that
 Reynolds words have the algebraic structure and universal properties of
 free Reynolds algebras, see \cite{ZGG-2} for more details about
 the construction of free Reynolds algebras.
 Recently, A. Das introduced the concept of Reynolds operators
 on Lie algebras and established the relationship between Reynolds
 operators and twisted Rota-Baxter operators \cite{Das}.
 In \cite{Guo-Gus-Li}, the authors introduced the concepts of weighted Reynolds operators
 and differential Reynolds operators, established the relationship between separable
  Volterra operators and weighted Reynolds operators, and studied Volterra integral operators
  and their integral equations from an algebraic perspective.

\subsection{Lie bialgebras}
 Lie bialgebras appeared in the paper of  V. G. Drinfel'd as a tool to study solutions of the classical Yang-Baxter equations \cite{Drinfeld-cybe}.
A Lie algebra $(\g, [\cdot,\cdot]_{\g})$, where $[\cdot,\cdot]_{\g}:\otimes^2\g\rightarrow\g$ is a Lie bracket, equipped with a comultiplication $\Delta:\g\rightarrow\g\otimes\g$ is called a Lie bialgebra, if the dual algebra of the coalgebra $(\g,\Delta)$ is a Lie algebra and the comultiplication $\Delta$ is a 1-cocycle.

Lie bialgebras have important applications in both mathematics and mathematical physics. A Lie bialgebra is the algebraic structure corresponding to a Poisson-Lie group and the classical structure of a quantized universal enveloping algebra \cite{CP,Drinfeld-cybe}. In the finite-dimensional case, the category of Lie bialgebras is equivalent to the category of Manin triples of Lie algebras \cite{Drinfeld-Quantum}.

 P. Etingof and D. Kazhdan proposed a general method to quantize a Lie bialgebra into a quantum group, and the quantization process preserves functoriality \cite{EK}. Y. Kosmann-Schwarzbach established the relationship between factorizable Lie bialgebras and double Lie algebras \cite{K}.

\subsection{The classical Yang-Baxter equation and Rota-Baxter operators}
The Yang-Baxter equation was independently discovered by C. N. Yang in his study of the two-dimensional Ising model in 1967 \cite{Yang} and by R. J. Baxter in his research on the eight-vertex model in 1972 \cite{R.J.Baxter}.
The Yang-Baxter equation has been extensively studied and applied in various fields of mathematical physics, such as integrable systems, quantum groups and knot theory. The classical Yang-Baxter equation can be considered as the semi-classical limit of the quantum Yang-Baxter equation. The connection between the classical Yang-Baxter equation and Lie bialgebras constitute the core of quantum group theory. In particular, solutions of the classical Yang-Baxter equation give rise to certain Lie bialgebras \cite{Drinfeld-cybe,RS}.

The Rota-Baxter operator on associative algebras can be traced back from G. Baxter's research in probability theory \cite{G. Baxter}. Rota-Baxter operators have important applications in the fields of mathematics and physics, including renormalization of quantum field theory \cite{CK}, integrable system \cite{BGN2010,STS} and double Poisson algebras \cite{Goncharov-Kolesnikov,Goncharov-Gubarev}. See \cite{Gub} for more details. M.A. Semenov-Tian-Shansky established  a close relationship between Rota-Baxter operators on  quadratic Lie algebras and solutions of the  classical Yang-Baxter equation. Namely, he established a one-to-one correspondence between Rota-Baxter operators of weight 0 and skew-symmetric solutions of the classical Yang-Baxter equation (in other words, triangular Lie bialgebra structures), and between Rota-Baxter operators of weight $1$ and solutions of the (modified) classical Yang-Baxter equation \cite{STS}. A connection between Rota-Baxter operators of weight 1 (of special type) and structures of factorizable Lie bialgebras on a quadratic Lie algebra was studied in \cite{RS,Lang-Sheng,MG}.

   B.A. Kupershmidt proposed the concept of relative Rota-Baxter operators on Lie algebras as the operator form of the classical Yang-Baxter equation \cite{Ku}. C. Bai \cite{Bai} showed that a relative Rota-Baxter operator can give a skew-symmetric solution of the classical Yang-Baxter equation in the semidirect Lie algebra.
 For more applications of (relative) Rota-Baxter operators in Lie algebras and the classical Yang-Baxter equation, please refer to \cite{BD}.

\subsection{Main results and outline of the paper}
 Taking into account the great importance of Reynolds operators and Lie bialgebras, in this paper we focus on the study of Reynolds operators on classical doubles of Lie bialgebras. This naturally led us to the definition of a Reynolds Lie bialgebra and motivated us to develop a suitable bialgebra theory for Reynolds Lie bialgebras.

To develop a bialgebra theory for Reynolds Lie algebras, first we study  representations and invariant bilinear forms on Reynolds Lie algebras. In particular, the dual representation associated to a representation exists naturally, which guarantees that there will be a good bialgebra theory in some sense. We introduce the notion of quadratic Reynolds Lie algebras and show that the invariant bilinear form induces an isomorphism from the adjoint representation to the coadjoint representation of the Reynolds Lie algebra.  Then we introduce the notions of matched pairs and Manin triples of Reynolds Lie algebras. Thanks to  the existence of the coadjoint representation, we show that  Manin triples of Reynolds Lie algebras and matched pairs associated to the coadjoint representations are equivalent. We further introduce the notion of a Reynolds Lie bialgebra and show that a Reynolds operator on a quadratic Rota-Baxter Lie algebra naturally gives rise to a Reynolds Lie bialgebra. Finally, we study the classical Yang-Baxter equation in a Reynolds Lie algebra and introduce the notions of relative Rota-Baxter operators on Reynolds Lie algebras and Reynolds pre-Lie algebras, by which we construct solutions of the classical Yang-Baxter equation in a certain Reynolds Lie algebra. We summarize the main results in this paper by the following diagram:

 \begin{equation}
    \begin{split}
\xymatrix{
   \text{Reynolds operators on}\atop \text{quadratic Rota-Baxter Lie algebras}\ar[rr]^-{\rm Theorem~\ref{thm:FL}}&& \text{Reynolds}\atop \text{Lie bialgebras} \ar@{<->}_{}[d]^-{\rm Theorem~\ref{bialgebra-matched}}\ar@{<-}_{}[rr]^-{\rm Theorem~\ref{Rey-bialgebra-cybe}} && \text{Solutions of classical Yang-Baxter equation}\atop \text{in Reynolds Lie algebras} \ar@{<-}^{}[d]^-{\rm Theorem~\ref{skew-solution-Reynolds}} \\
 \text{Matched pairs of}\atop \text{Lie algebras}  \ar@{<-}^{}[rr]^-{\rm Theorem~\ref{demp}} & & \text{Matched pairs of}\atop\text{Reynolds Lie algebras}\ar@{<->}_{}[d]^-{\rm Theorem~\ref{matched-manin-equivalent}}\ar@{<-}^{}[rr]^-{\rm Proposition~\ref{construction-mathched-pair}} && \text{Relative Rota-Baxter operators}\atop \text{on Reynolds Lie algebras} \ar@{<-}^{}[d]^-{\rm Proposition~\ref{Reynolds-Pre-conclusion3}} \\
  & & \text{Manin triples of}\atop \text{Reynolds Lie algebras} & & \text{Reynolds}\atop \text{pre-Lie algebras}  }
\\
    \end{split}
\end{equation}

The paper is organized as follows. In Section \ref{sec:rep}, we study representations
and quadratic Reynolds Lie algebras.  We show that the invariant bilinear form in
a quadratic Reynolds Lie algebra naturally induces an isomorphism from the adjoint
representation to the coadjoint representation (Theorem \ref{quadratic-Rep}).
In Section \ref{sec:mp},   we introduce the notions of matched pairs of Reynolds Lie algebras and
 Manin triples of Reynolds Lie algebras, and show that Manin triples of Reynolds
 Lie algebras and certain matched pairs of Reynolds Lie algebras are
 equivalent (Theorem \ref{matched-manin-equivalent}). In Section \ref{sec:bia},
 first we introduce the notion of a Reynolds Lie bialgebra,
 and show that they are equivalent to certain matched pairs of Reynolds Lie algebras (Theorem \ref{bialgebra-matched}). We show that an Reynolds operator on a quadratic Rota-Baxter Lie algebra induces a Reynolds Lie bialgebra naturally (Theorem \ref{thm:FL}). In Section \ref{sec:YBE}, we introduce the notion of the classical Yang-Baxter equation in a Reynolds Lie algebra and  construct solutions   in terms of relative Rota-Baxter operators on Reynolds Lie algebras and Reynolds pre-Lie algebras.
\vspace{2mm}
\noindent

Unless otherwise is specified, in this paper all vector spaces are assumed to be finite-dimensional over a field $\mathbb F$ of characteristic zero.

\section{Reynolds Lie algebras}\label{sec:rep}
In this section, we introduce the notion of a representation of a Reynolds Lie algebra, and show that the dual representation exists naturally.  We also introduce the notion of a quadratic Reynolds Lie algebra, and show that it induces an isomorphism from the adjoint representation and the coadjoint representation. \emptycomment{ A representation of a Reynolds Lie algebra also induces a representation of the underlying NS-Lie algebra.}
\begin{defi}\rm{(\cite{Das})}
Let $(\g,[\cdot,\cdot]_\g)$ be a Lie algebra. A linear operator $R:\g\rightarrow \g$ is called a
{\bf Reynolds operator} if
\begin{equation}\label{Reynolds-operator}
 [Rx,Ry]_{\g}=R\Big([Rx,y]_{\g}+[x,Ry]_{\g}-[Rx,Ry]_{\g}\Big), \quad \forall x, y \in \g.
\end{equation}
Moreover, a Lie algebra $\g$ equipped with a Reynolds operator $R$ is called a {\bf Reynolds Lie algebra},
which is denoted by $(\g,[\cdot,\cdot]_{\g},R).$
\end{defi}

\begin{defi}
Let $(\g,[\cdot,\cdot]_\g,R)$ and $(\g',[\cdot,\cdot]_{\g'},R')$ be Reynolds Lie algebras. A homomorphism from $(\g,[\cdot,\cdot]_\g,R)$ to $(\g',[\cdot,\cdot]_{\g'},R')$ consists of a Lie algebra homomorphism $\phi:\g\rightarrow\g'$ such that
\begin{eqnarray*}
\phi\circ R&=&R'\circ\phi.
\end{eqnarray*}
Furthermore, if $\phi$ is invertible, then $\phi$ is called an {\bf isomorphism}.
\end{defi}

\begin{pro}\rm{(\cite{FoxMil})}\label{dre-Rey}
    Let $(\g,[\cdot,\cdot]_{\g})$ be a Lie algebra and $d\in\Der(\g)$ is a derivation of $\g$ such that the map $T=\Id-d$ is invertible. Then $R=T^{-1}=(\Id-d)^{-1}$ is a Reynolds operator on $\g$.
\end{pro}
\begin{ex}\label{dre-Rey-ex}
    Consider the $3$-dimensional simple Lie algebra $(\g,[\cdot,\cdot]_{\g})=sl(2,\CC)$ with a basis
\begin{equation*}
\left\{x=\left(
       \begin{array}{cc}
         0 & 1 \\
         0 & 0 \\
       \end{array}
     \right),\quad y=\left(
                          \begin{array}{cc}
                            0 & 0 \\
                            1 & 0 \\
                          \end{array}
                        \right), \quad h =\left(
                          \begin{array}{cc}
                            1 & 0 \\
                            0 & -1 \\
                          \end{array}
                        \right)\right\}.
\end{equation*}
The multiplication is given by
\begin{eqnarray*}
[h,x]_{\g}=2x,\quad [h,y]_{\g}=-2y,\quad [x,y]_{\g}=h.
\end{eqnarray*}
Consider $d=\ad_{h}$, the inner derivation acting as $d(a)=[h,a]$. It is straightforward to check that $T=\Id-\ad_h$ is invertible and the map $R=T^{-1}$ acts as
$$
R(h)=h,\ R(x)=-x,\ R(y)=\frac{1}{3}y.
$$
By Proposition \ref{dre-Rey}, $R$ is a Reynolds operator on $sl(2,\mathbb C)$. Note that maps $\Id-\ad_x$ and $\Id-\ad_y$ are also invertible. Therefore, $(\Id-\ad_x)^{-1}$ and $(\Id-\ad_y)^{-1}$ are also Reynolds operators on $sl(2,\mathbb C)$.
\end{ex}

\begin{pro}\label{induce-Lie-algebra}
Let  $(\g,[\cdot,\cdot]_{\g},R)$ be a Reynolds Lie algebra. Then $R$ induces a new Lie algebra structure
on $\g$ given by
\begin{eqnarray}
  [x,y]_{R}:=[Rx,y]_{\g}+[x,Ry]_{\g}-[Rx,Ry]_{\g},\quad \forall x,y\in \g.
\end{eqnarray}
The Lie algebra $(\g,[\cdot,\cdot]_{R})$ is called the {\bf induced Lie algebra,} and denoted by $\g_{R}.$ Moreover, $(\g,[\cdot,\cdot]_{R},R)$ is also a Reynolds Lie algebra and $R$ is a homomorphism from Reynolds Lie algebra $(\g,[\cdot,\cdot]_{R},R)$ to $(\g,[\cdot,\cdot]_\g,R).$
\end{pro}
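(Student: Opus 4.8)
The plan is to check, in order, that $[\cdot,\cdot]_R$ is a Lie bracket, that $R$ is again a Reynolds operator for it, and that $R$ intertwines the two brackets. The single observation that organizes everything is that the defining identity \eqref{Reynolds-operator} is exactly the statement
\[
R[x,y]_R \;=\; [Rx,Ry]_\g, \qquad \forall\, x,y\in\g,
\]
which I will call $(\star)$: applying $R$ to the new bracket returns the $\g$-bracket of the images. I would record $(\star)$ first, since it trivializes most of what follows.

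Bilinearity and skew-symmetry of $[\cdot,\cdot]_R$ are immediate from the definition, as interchanging $x$ and $y$ negates each of the three defining terms. For the Jacobi identity I would expand the Jacobiator $\sum_{\mathrm{cyc}}[[x,y]_R,z]_R$. Writing out $[u,v]_R=[Ru,v]_\g+[u,Rv]_\g-[Ru,Rv]_\g$ and using $(\star)$ to replace $R[x,y]_R$ by $[Rx,Ry]_\g$, a direct expansion gives
\[
[[x,y]_R,z]_R=[[Rx,Ry]_\g,z]_\g+[[Rx,y]_\g,Rz]_\g+[[x,Ry]_\g,Rz]_\g-2\,[[Rx,Ry]_\g,Rz]_\g .
\]
Summing cyclically, the pure triple-$R$ terms $-2\sum_{\mathrm{cyc}}[[Rx,Ry]_\g,Rz]_\g$ vanish on their own by the Jacobi identity of $[\cdot,\cdot]_\g$. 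The remaining terms each carry exactly two of the symbols $Rx,Ry,Rz$ together with one of $x,y,z$, and they regroup into three cyclic triples, one for each element set $\{Rx,Ry,z\}$, $\{Ry,Rz,x\}$, $\{Rz,Rx,y\}$; for instance the $\{Rx,Ry,z\}$ triple is $[[Rx,Ry]_\g,z]_\g+[[Ry,z]_\g,Rx]_\g+[[z,Rx]_\g,Ry]_\g$. Each triple is a cyclic sum of the form $[[a,b]_\g,c]_\g+[[b,c]_\g,a]_\g+[[c,a]_\g,b]_\g$ and hence vanishes by the Jacobi identity of $\g$, so the whole Jacobiator is zero. This bookkeeping is the one genuinely computational step and the main place to be careful.

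For the Reynolds identity of $(\g,[\cdot,\cdot]_R,R)$ I would apply $(\star)$ twice. Directly from the definition, the left-hand side is $[Rx,Ry]_R=[R^2x,Ry]_\g+[Rx,R^2y]_\g-[R^2x,R^2y]_\g$. For the right-hand side, linearity of $R$ together with three applications of $(\star)$ to the pairs $(Rx,y)$, $(x,Ry)$, $(Rx,Ry)$ yields
\[
R\big([Rx,y]_R+[x,Ry]_R-[Rx,Ry]_R\big)=[R^2x,Ry]_\g+[Rx,R^2y]_\g-[R^2x,R^2y]_\g,
\]
which agrees with the left-hand side, so the identity holds. Finally, $R$ is a homomorphism of Reynolds Lie algebras from $(\g,[\cdot,\cdot]_R,R)$ to $(\g,[\cdot,\cdot]_\g,R)$: it is a Lie algebra homomorphism precisely by $(\star)$, and it commutes with $R$ trivially. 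I expect the Jacobi computation to be the only obstacle, and even that becomes routine once the substitution via $(\star)$ is made.
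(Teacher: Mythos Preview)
Your proposal is correct and follows essentially the same approach as the paper: both start from the observation $(\star)$ that $R[x,y]_R=[Rx,Ry]_\g$, expand the Jacobiator using $(\star)$ to replace $R[\cdot,\cdot]_R$, and verify the Reynolds identity for $[\cdot,\cdot]_R$ by applying $(\star)$ to each summand. The only difference is cosmetic: the paper simply lists the fifteen expanded terms and declares the sum zero, whereas you regroup them explicitly into one triple-$R$ Jacobi cycle and three double-$R$ Jacobi cycles, which is a cleaner way to present the same cancellation.
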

\begin{proof} For all $x,y,z\in \g,$ it is obvious that $[Rx,Ry]_{\g}=R[x,y]_{R},$ and
\begin{eqnarray*}
&&[x,[y,z]_{R}]_{R}-[[x,y]_{R},z]_{R}-[y,[x,z]_{R}]_{R}\\
&=&[Rx,[Ry,z]_{\g}]_{\g}+[Rx,[y,Rz]_{\g}]_{\g}-[Rx,[Ry,Rz]_{\g}]_{\g}+[x,[Ry,Rz]_{\g}]_{\g}-[Rx,[Ry,Rz]_{\g}]_{\g}\\
&&-[[Rx,Ry]_{\g},z]_{\g}-[[Rx,y]_{\g},Rz]_{\g}-[[x,Ry]_{\g},Rz]_{\g}+[[Rx,Ry]_{\g},Rz]_{\g}+[[Rx,Ry]_{\g},Rz]_{\g}\\
&&-[Ry,[Rx,z]_{\g}]_{\g}-[Ry,[x,Rz]_{\g}]_{\g}+[Ry,[Rx,Rz]_{\g}]_{\g}-[y,[Rx,Rz]_{\g}]_{\g}+[Ry,[Rx,Rz]_{\g}]_{\g}\\
&=&0,
\end{eqnarray*}
which implies that $(\g,[\cdot,\cdot]_{R})$ is a Lie algebra. Furthermore, we have
\begin{eqnarray*}
  [Rx,Ry]_{R}&=&[R^2x,Ry]_{\g}+[Rx,R^2y]_{\g}-[R^2x,R^2y]_{\g}\\
  &=&R\Big([Rx,y]_{R}+[x,Ry]_{R}-[Rx,Ry]_{R}\Big),
\end{eqnarray*}
which implies that $(\g,[\cdot,\cdot]_{R},R)$ is a Reynolds Lie algebra and  $R$ is a homomorphism from $(\g,[\cdot,\cdot]_{R},R)$ to $(\g,[\cdot,\cdot]_\g,R).$
\end{proof}

\begin{ex}
Let $q\in \mathbb C$ and $B(q)$ be the Block Lie algebra \rm{(\cite{Xia-You-Zhou})} generated by basis elements $\{L_{m,i}|m,i\in \mathbb Z\},$ where the Lie bracket given
 by
\begin{eqnarray*}
[L_{m,i}, L_{n,j}]_{\g} = (n(i+q)-m(j+q))L_{m+n,i+j}, \quad \forall i,j,m,n \in  \mathbb Z.
\end{eqnarray*}
Then the linear map $R:B(q)\rightarrow B(q)$ defined by $R(L_{m,i})=\frac{1}{m+i+1}L_{m,i}$ is a Reynolds operator on $B(q).$ On the one hand, for any $m,n,i,j\in \mathbb Z,$
\begin{eqnarray}\label{Rey-left}
  [R(L_{m,i}),R(L_{n,j})]_{\g}=\left[\frac{1}{m+i+1}L_{m,i},\frac{1}{n+j+1}L_{n,j}\right]_{\g}=\frac{(n(i+q)-m(j+q))}{(m+i+1)(n+j+1)}L_{m+n,i+j}.
\end{eqnarray}
On the other hand,
\begin{eqnarray}\label{Rey-right}
\nonumber&&R[R(L_{m,i}),L_{n,j}]_{\g}+R[L_{m,i},R(L_{n,j})]_{\g}-R[R(L_{m,i}),R(L_{n,j})]_{\g}\\
\nonumber&=&\Big(\frac{(n(i+q)-m(j+q))}{(m+i+1)(m+n+i+j+1)}+\frac{(n(i+q)-m(j+q))}{(n+j+1)(m+n+i+j+1)}\\
\nonumber&&-\frac{(n(i+q)-m(j+q))}{(m+i+1)(n+j+1)(m+n+i+j+1)}\Big)L_{m+n,i+j}\\
&=&\frac{(n(i+q)-m(j+q))}{(m+i+1)(n+j+1)}L_{m+n,i+j}.
\end{eqnarray}
It follows from \eqref{Rey-left} and \eqref{Rey-right} that $(B(q),[\cdot,\cdot]_{\g},R)$ is a Reynolds Lie algebra. The induced Reynolds Lie algebra structure $(B(q),[\cdot,\cdot]_{R},R)$ is given by
\begin{eqnarray*}
  [L_{m,i},L_{n,j}]_{R}:&=&[R(L_{m,i}),L_{n,j}]_{\g}+[L_{m,i},R(L_{n,j})]_{\g}-[R(L_{m,i}),R(L_{n,j})]_{\g}\\
  &=&\frac{(m+n+i+j+1)((n(i+q)-m(j+q)))}{(m+i+1)(n+j+1)}L_{m+n,i+j}.
\end{eqnarray*}
\end{ex}
\begin{defi}
    Let $\rho:\g\rightarrow\gl(W)$ be a representation of a Lie algebra $(\g,[\cdot,\cdot]_{\g})$. Then there is a natural Lie algebra structure on the semidirect product $\g\oplus W$ given by
    \begin{eqnarray*}
   [x+u,y+v]_{\ltimes}=[x,y]_{\g}+\rho(x)v-\rho(y)u,\quad \forall x,y\in \g,u,v\in W,
    \end{eqnarray*}
\end{defi}
\begin{pro}\label{semi-direct-ReyLiealge}
 Let $(W;\rho)$ be a representation of a Lie algebra $(\g,[\cdot,\cdot]_{\g})$, $R$ be a Reynolds operator on $\g$ and $T:W\rightarrow W$ be a linear map. Define a map $\widehat{R}:\g\oplus W\rightarrow\g\oplus W$ by
   \begin{eqnarray*}
   \widehat{R}(x+u)=Rx+Tu,\quad \forall x\in \g, u\in W.
    \end{eqnarray*}
    Then $(\g\oplus W,[\cdot,\cdot]_{\ltimes},\widehat{R})$ is a Reynolds Lie algebra if and only if for all $x\in \g$ and $u\in W$
    \begin{equation}\label{tmp1}\rho(Rx)(Tu)=T\Big(\rho(x)(Tu)+\rho(Rx)(u)-\rho(Rx)(Tu)\Big).\end{equation}
\end{pro}

\begin{proof}
It is known that  $(\g\oplus W, [\cdot,\cdot]_{\ltimes})$ is a Lie algebra.
For all $x,y \in\g, u,v\in W$,  we have
 \begin{eqnarray*}
   &&[\widehat{R}(x+u),\widehat{R}(y+v)]_{\ltimes}-\widehat{R}\Big([\widehat{R}(x+u),y+v]_{\ltimes}
   +[x+u,\widehat{R}(y+v)]_{\ltimes}-[\widehat{R}(x+u),\widehat{R}(y+v)]_{\ltimes}]\Big)\\
   &=&[Rx,Ry]_{\g}+\rho(Rx)(Tv)-\rho(Ry)(Tu)-R[x,Ry]_{\g}-T(\rho(x)(Tv))+T(\rho(Ry)u)\\
   &&-R[Rx,y]_{\g}-T(\rho(Rx)v)+T(\rho(y)(Tu))+R[Rx,Ry]_{\g}+T(\rho(Rx)(Tv))-T(\rho(Ry)(Tu))\\
   &=&\rho(Rx)(Tv)-T(\rho(x)(Tv))-T(\rho(Rx)v)+T(\rho(Rx)(Tv))\\
   && -\rho(Ry)(Tu)+T(\rho(Ry)u)+T(\rho(y)(Tu))-T(\rho(Ry)(Tu)).
 \end{eqnarray*}
Thus, if \eqref{tmp1} is satisfied, then $\widehat{R}$ is a Reynolds operator on $\g\oplus W$.
Conversely, if $\widehat{R}$ is a Reynolds operator
 on $\g\oplus W$, then taking $u=v$ and $y=0$ will lead us to \eqref{tmp1}.
\end{proof}

This naturally leads us to the notion of representations of Reynolds Lie algebras.

\begin{defi}
Let $(\g,[\cdot,\cdot]_{\g},R)$ be a Reynolds Lie algebra.
A {\bf representation} of $(\g,[\cdot,\cdot]_{\g},R)$ on a vector space $W$ is a pair $(T,\rho)$, where $\rho:\g\rightarrow\gl(W)$ is a representation of the Lie algebra $\g$ on $W$, and $T:W\rightarrow W$ is a linear map satisfying
\begin{equation}\label{representation-Reynolds-Liealg}
\rho(Rx)(Tu)=T\Big(\rho(x)(Tu)+\rho(Rx)(u)-\rho(Rx)(Tu)\Big), \quad \forall x\in\g, u\in W.
\end{equation}
\end{defi}
We will denote a representation by $(W;T,\rho).$

\begin{ex}\label{adjoint-representation-Rey}
  It is straightforward to see that $(\g;R,\ad)$ is a representation of a Reynolds Lie algebra $(\g,[\cdot,\cdot]_{\g},R)$, which is called the {\bf adjoint representation}.
\end{ex}

Next we consider the {\bf dual representation} of a Reynolds Lie algebra. Let $(W;T,\rho)$ be a representation of a Reynolds Lie algebra $(\g,[\cdot,\cdot]_{\g},R).$ Define $\rho^{*}:\g\rightarrow\gl(W^*)$ by
\begin{eqnarray}
 \langle \rho^{*}(x)\xi,u \rangle=-\langle\xi,\rho(x)u\rangle,\quad \forall x\in\g, u\in W,\xi\in W^{*}.
\end{eqnarray}
It is well known that $\rho^{*}$ is a representation of $\g$ on $W^{*}.$
Consider the operator $T^{*}:W^{*}\rightarrow W^{*},$ where $T^{*}$ is the usual dual of $T$, i.e.
\begin{eqnarray}\label{dual-Reynolds-operator}
 \langle T^{*}(\xi),u\rangle=\langle\xi,Tu\rangle,\quad \forall u\in W, \xi \in W^{*}.
\end{eqnarray}

\begin{pro}\label{dual-representation}
  With the above notations, $(W^{*};-T^{*}, \rho^{*})$ is a representation of the Reynolds Lie algebra $(\g,[\cdot,\cdot]_{\g},R),$ called the {\bf dual representation.}
\end{pro}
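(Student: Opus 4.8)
The plan is to check the two defining requirements of a representation in turn. That $\rho^*$ is a representation of the Lie algebra $\g$ on $W^*$ is the classical statement about dual representations already recalled in the text, so the whole content lies in verifying the Reynolds compatibility condition \eqref{representation-Reynolds-Liealg} for the pair $(-T^*,\rho^*)$: for all $x\in\g$ and $\xi\in W^*$,
\[
\rho^*(Rx)\big((-T^*)\xi\big)=(-T^*)\Big(\rho^*(x)\big((-T^*)\xi\big)+\rho^*(Rx)(\xi)-\rho^*(Rx)\big((-T^*)\xi\big)\Big).
\]

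To prove this identity I would test both sides against an arbitrary $u\in W$ under the pairing $\langle\cdot,\cdot\rangle$ and transport every operator from $W^*$ back to $W$ by means of the defining relations $\langle\rho^*(x)\xi,u\rangle=-\langle\xi,\rho(x)u\rangle$ and \eqref{dual-Reynolds-operator}. Unwinding the two dualizations on the left-hand side collapses it to $\langle\xi,T\rho(Rx)u\rangle$, while pairing the right-hand side with $u$ (the outer $-T^*$ sending $u$ to $Tu$, and each inner factor contributing one sign) produces $\langle\xi,-T\rho(x)Tu+\rho(Rx)Tu+T\rho(Rx)Tu\rangle$. Since $W^*$ separates the points of $W$, equality for all $\xi$ and $u$ amounts to the operator identity $T\rho(Rx)=-T\rho(x)T+\rho(Rx)T+T\rho(Rx)T$ on $W$, which is nothing but a rearrangement of the operator form $\rho(Rx)T=T\rho(x)T+T\rho(Rx)-T\rho(Rx)T$ of the hypothesis \eqref{representation-Reynolds-Liealg}. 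This closes the argument, and notably no finite-dimensionality is needed, since the evaluation pairing separates points of $W$ in any dimension.

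The verification is routine once set up, so the only delicate point — and the sole reason the statement carries any content — is the bookkeeping of signs, of which there are two independent sources: each occurrence of $\rho^*$ contributes a minus by its definition, and each occurrence of $-T^*$ contributes a minus. The appearance of $-T^*$ rather than $T^*$ is exactly what is forced here: with $+T^*$ the three terms on the right would not carry the signs demanded by \eqref{representation-Reynolds-Liealg}, and the reduction to the hypothesis would fail. I would therefore write out the four scalar pairings explicitly, displaying the sign produced at each transport step, and then read off the resulting operator equation and match it against \eqref{representation-Reynolds-Liealg}.
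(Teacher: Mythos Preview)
Your proposal is correct and follows essentially the same approach as the paper: both arguments pair the identity to be verified against an arbitrary $u\in W$, transport every $\rho^*$ and $T^*$ back to operators on $W$ via the defining dualities, and recognize the result as the hypothesis \eqref{representation-Reynolds-Liealg}. The only cosmetic difference is that the paper collects everything on one side and shows the pairing vanishes, whereas you compute the two sides separately and match the resulting operator identity on $W$; your extra remark about $W^*$ separating points of $W$ (making the argument dimension-free) is a welcome clarification.
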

\begin{proof}
  Since $\rho^{*}$ is a representation of $\g$, we only need to prove the following equality:
  \begin{eqnarray*}\label{dual-representation-proof}
  \rho^{*}(Rx)(-T^{*}\xi)=T^*\Big(\rho^*(x)(T^*\xi)-\rho^*(Rx)(\xi)-\rho^*(Rx)(T^*\xi)\Big),\quad \forall x\in \g,\xi\in W^*.
  \end{eqnarray*}

For all $x\in \g, u\in W, \xi\in W^{*},$ we have
\begin{eqnarray*}
&&\langle \rho^{*}(Rx)(-T^{*}\xi)-T^*(\rho^*(x)(T^*\xi))+T^*(\rho^*(Rx)(\xi))+T^*(\rho^*(Rx)(T^*\xi)),u\rangle\\
&=&\langle T^*\xi,\rho(Rx)u\rangle-\langle\rho^*(x)(T^*\xi),Tu\rangle+\langle\rho^*(Rx)(\xi),Tu \rangle+\langle\rho^*(Rx)(T^*\xi),Tu\rangle\\
&=&\langle\xi,T(\rho(Rx)u)+T(\rho(x)(Tu))-\rho(Rx)(Tu)-T(\rho(Rx)(Tu))\rangle\\
&=&0;
\end{eqnarray*}
which implies that \eqref{representation-Reynolds-Liealg} holds. Therefore,  $(W^{*};-T^{*}, \rho^{*})$ is a representation of $(\g,[\cdot,\cdot]_{\g},R).$
\end{proof}

\begin{cor}\label{thm:coadjoint}
Let $(\g,[\cdot,\cdot]_{\g},R)$ be a Reynolds Lie algebra. Then $(\g^*; -R^*,\ad^*)$ is a representation of $(\g,[\cdot,\cdot]_{\g},R)$, which is called the {\bf coadjoint representation}.
\end{cor}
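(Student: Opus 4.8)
The plan is to specialize Proposition~\ref{dual-representation} to the adjoint representation, which by Example~\ref{adjoint-representation-Rey} is the pair $(\g;R,\ad)$. The dual representation construction there takes a representation $(W;T,\rho)$ and produces $(W^*;-T^*,\rho^*)$. So I would simply set $W=\g$, $\rho=\ad$, and $T=R$, and then identify the resulting triple $(\g^*;-R^*,(\ad)^*)$ with the claimed coadjoint representation $(\g^*;-R^*,\ad^*)$.

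First I would recall that by Example~\ref{adjoint-representation-Rey}, $(\g;R,\ad)$ is indeed a representation of $(\g,[\cdot,\cdot]_\g,R)$, so Proposition~\ref{dual-representation} applies with $T=R$. Second, I would note that the dual of the Lie-algebra representation $\ad$ is, by definition, the coadjoint representation $\ad^*$ of the underlying Lie algebra $\g$, given by $\langle \ad^*(x)\xi,y\rangle=-\langle\xi,[x,y]_\g\rangle$; this is the standard fact invoked just before Proposition~\ref{dual-representation}. Third, the dual operator of $T=R$ is by definition $R^*$, so the operator component of the dual representation is $-R^*=-T^*$. Matching these against the data in Proposition~\ref{dual-representation} gives exactly $(\g^*;-R^*,\ad^*)$, which is the assertion.

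Since this is an immediate corollary, there is essentially no obstacle: the entire content is contained in Proposition~\ref{dual-representation} together with the observation that $(\g;R,\ad)$ is a representation. The only thing to verify is the bookkeeping identification of $(\ad)^*$ with $\ad^*$ and of $R^*$ with the dual operator, both of which are definitional. I would therefore write the proof as one or two sentences: \emph{Since $(\g;R,\ad)$ is a representation of $(\g,[\cdot,\cdot]_\g,R)$ by Example~\ref{adjoint-representation-Rey}, applying Proposition~\ref{dual-representation} with $W=\g$, $\rho=\ad$ and $T=R$ shows that $(\g^*;-R^*,\ad^*)$ is a representation of $(\g,[\cdot,\cdot]_\g,R)$.} If anything needs slightly more care, it is only to confirm that the compatibility condition \eqref{representation-Reynolds-Liealg} for $(\g^*;-R^*,\ad^*)$ is precisely the instance of the dual-representation condition already proved, which requires no new computation.
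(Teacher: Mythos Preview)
Your proposal is correct and matches the paper's own proof, which simply states that the result follows directly from Example~\ref{adjoint-representation-Rey} and Proposition~\ref{dual-representation}. Your additional remarks about identifying $(\ad)^*$ with $\ad^*$ and $T^*$ with $R^*$ are accurate bookkeeping but not strictly needed, as these identifications are definitional.
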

\begin{proof}
  It follows from  Example \ref{adjoint-representation-Rey} and Proposition \ref{dual-representation} directly.
\end{proof}

Next we introduce the notion of a quadratic Reynolds Lie algebra, and show that the invariant bilinear form induces an isomorphism from the adjoint representation to the coadjoint representation of a Reynolds Lie algebra. Quadratic Reynolds Lie algebras will also play important roles in our later study of Manin triples of Reynolds Lie algebras.

Recall that a nondegenerate symmetric bilinear form $S\in \otimes ^2\g^*$ on a Lie algebra $\g$ is said to be invariant if
\begin{eqnarray}
\label{Invariant}S([x,y]_\g,z)+S(y,[x,z]_\g)&=&0,\quad \forall x,y\in \g.
\end{eqnarray}
A quadratic Lie algebra $(\g,S)$ is a Lie algebra $\g$ equipped with a nondegenerate symmetric invariant bilinear form $S\in \otimes ^2\g^*$.

\begin{defi}\label{defi:qua}
Let $(\g,[\cdot,\cdot]_\g,R)$ be a Reynolds Lie algebra, and $S\in \otimes ^2\g^*$ be a nondegenerate symmetric bilinear form.  The triple  $(\g,R,S)$ is called a {\bf quadratic Reynolds Lie algebra} if $(\g,S)$ is a quadratic Lie algebra and the following compatibility condition holds:
\begin{eqnarray}
\label{Rey-manin}
S(Rx,y)+S(x, Ry)&=&0,\quad \forall x,y\in \g.
\end{eqnarray}
\end{defi}

\begin{defi}
 Let $(W;T,\rho)$ and $(W';T',\rho')$ be two representations of a Reynolds Lie algebra $(\g,[\cdot,\cdot]_{\g},R)$. A homomorphism from $(W; T,\rho)$ to $(W'; T',\rho')$ is a linear map $\phi:W\to W'$ such that
 \begin{eqnarray*}
   \phi\circ \rho(x)&=&\rho'(x)\circ \phi,\quad \forall x\in\g,\\
   \phi\circ T&=&T'\circ \phi.
 \end{eqnarray*}
\end{defi}

\begin{thm}\label{quadratic-Rep}
If  $(\g,R,S)$ is a quadratic Reynolds Lie algebra, then the linear map $S^\sharp:\g\to\g^*$ defined by
 $
 \langle S^\sharp(x),y\rangle=S(x,y)
 $
for all $x,y\in\g$, is an isomorphism from the adjoint representation $(\g;R,\ad)$ to the coadjoint representation $(\g^*;-R^*,\ad^*)$.
\end{thm}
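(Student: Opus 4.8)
The plan is to verify directly the two defining conditions of a homomorphism of representations of a Reynolds Lie algebra for the map $\phi=S^\sharp$ from the source $(\g;R,\ad)$ to the target $(\g^*;-R^*,\ad^*)$, and to establish bijectivity separately. Concretely, I must show that $S^\sharp$ intertwines the Lie algebra actions, $S^\sharp\circ\ad(x)=\ad^*(x)\circ S^\sharp$ for all $x\in\g$; that it intertwines the Reynolds operators, $S^\sharp\circ R=-R^*\circ S^\sharp$; and that $S^\sharp$ is invertible. Invertibility is immediate: since $S$ is nondegenerate, $S^\sharp$ is injective, and in the finite-dimensional setting (or under the standing nondegeneracy hypothesis) it is therefore a linear isomorphism onto $\g^*$.

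For the first intertwining condition I would pair both sides against an arbitrary $z\in\g$. On the left, $\langle S^\sharp(\ad(x)y),z\rangle=S([x,y]_\g,z)$; on the right, unwinding the coadjoint action, $\langle\ad^*(x)S^\sharp(y),z\rangle=-\langle S^\sharp(y),[x,z]_\g\rangle=-S(y,[x,z]_\g)$. These agree precisely by the invariance \eqref{Invariant}. This is the classical identification of the adjoint and coadjoint representations of a quadratic Lie algebra, and it uses only that $(\g,S)$ is a quadratic Lie algebra, not the Reynolds structure.

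For the second intertwining condition I would again pair against an arbitrary $y\in\g$: the left side gives $\langle S^\sharp(Rx),y\rangle=S(Rx,y)$, while the right side gives $\langle -R^*(S^\sharp(x)),y\rangle=-\langle S^\sharp(x),Ry\rangle=-S(x,Ry)$. These are equal exactly by the compatibility condition \eqref{Rey-manin}, which is what upgrades $(\g,S)$ to a genuine quadratic Reynolds Lie algebra. Together with the previous paragraph and the invertibility of $S^\sharp$, this establishes that $S^\sharp$ is an isomorphism of representations.

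There is no hard computation here; the only genuine point requiring care is the sign bookkeeping built into the dual representation. The target carries the Reynolds operator $-R^*$, not $R^*$ (Corollary \ref{thm:coadjoint}), so one must confirm that the minus sign produced by \eqref{Rey-manin} matches the minus sign in the coadjoint data — this is exactly why the compatibility is written as $S(Rx,y)+S(x,Ry)=0$ rather than with a plus. I expect this sign-matching, rather than any structural difficulty, to be the only place an error could enter, and the surjectivity of $S^\sharp$ should be flagged as resting on nondegeneracy of $S$.
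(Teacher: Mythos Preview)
Your proposal is correct and follows essentially the same approach as the paper: both verify the two intertwining conditions $S^\sharp\circ\ad_x=\ad_x^*\circ S^\sharp$ and $S^\sharp\circ R=-R^*\circ S^\sharp$ by pairing against an arbitrary element and invoking \eqref{Invariant} and \eqref{Rey-manin} respectively. If anything, your write-up is slightly more careful than the paper's, since you explicitly address the bijectivity of $S^\sharp$ via nondegeneracy and flag the sign bookkeeping, whereas the paper treats both points as understood.
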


\begin{proof}
   Let  $(\g,R,S)$ be a quadratic Reynolds Lie algebra. By \eqref{Invariant}, we have
  $$
      S^\sharp\circ \ad_x=\ad_x^*\circ S^\sharp.
    $$
By \eqref{Rey-manin}, we have
\begin{eqnarray*}
 S(Rx,y)+S(x,Ry)
&=&\langle S^\sharp(Rx),y\rangle+\langle S^\sharp(x),R(y)\rangle\\
&=&\langle S^\sharp(Rx),y\rangle+\langle R^{*}(S^\sharp(x)),y\rangle\\
&=&0,
\end{eqnarray*}
that is
\begin{eqnarray}\label{eq:SP-1}
S^\sharp\circ R= -R^*\circ S^\sharp.
\end{eqnarray}
 Therefore, $S^\sharp:\g\to\g^*$ is an isomorphism from the adjoint representation $(\g;R,\ad)$ to the coadjoint representation $(\g^*;-R^*,\ad^*)$.
\end{proof}
\emptycomment{
At the end of this section, first we recall the notion of NS-Lie algebras, which is related to Reynolds Lie algebras. We study the representations of NS-Lie algebras and construct the corresponding semi-direct product NS-Lie algebra. In particular, we establish the relationship between representations of Reynolds Lie algebras and representations of the underlying NS-Lie algebras.

\begin{defi}\label{NS-Liealgebra}\cite{Das}
Let $\frkG$ be a vector space together with two linear maps $\vartriangleleft,\vartriangleright:\frkG\otimes \frkG\rightarrow \frkG$ in which $\vartriangleright$ is skew-symmetric. The triple $(\frkG,\vartriangleleft,\vartriangleright)$ is called an
{\bf NS-Lie algebra} if the following identities hold:
\begin{align}
\label{NS-Lie-1} (x\vartriangleleft y)\vartriangleleft z-x\vartriangleleft(y\vartriangleleft z)-(y\vartriangleleft x)\vartriangleleft z+y\vartriangleleft(x\vartriangleleft z)+(x\vartriangleright y)\vartriangleleft z&=0,\\
\label{NS-Lie-2}x\vartriangleright[y,z]_{\vartriangleleft,\vartriangleright}+y\vartriangleright[z,x]_{\vartriangleleft,\vartriangleright}
+z\vartriangleright[x,y]_{\vartriangleleft,\vartriangleright}+x\vartriangleleft (y\vartriangleright z)+y\vartriangleleft(z\vartriangleright x)+z\vartriangleleft(x\vartriangleright y)&=0,
\end{align}
for all $x,y,z\in \frkG$ and $[\cdot,\cdot]_{\vartriangleleft,\vartriangleright}$ is defined by
\begin{eqnarray}
\label{NS-Lie-3} [x,y]_{\vartriangleleft,\vartriangleright}=x\vartriangleleft y-y\vartriangleleft x+x\vartriangleright y,\quad \forall x,y\in \frkG.
\end{eqnarray}
\end{defi}
\begin{rmk}
Let $(\frkG,\vartriangleleft,\vartriangleright)$ be an NS-Lie algebra. On the one hand, if $x\vartriangleleft y=0,\forall x,y \in\frkG,$
we get that $(\frkG,\vartriangleright)$ is a Lie algebra. On the other hand, if $x\vartriangleright y=0,$ then $(\frkG,\vartriangleleft)$
is a pre-Lie algebra. Thus, NS-Lie algebras are generalizations of both Lie algebras and pre-Lie algebras.
\end{rmk}

\begin{defi}
  A {\bf representation} of an NS-Lie algebra $(\frkG,\vartriangleleft,\vartriangleright)$ is a quadruple $(W;\varrho,\mu,\nu),$ where $W$ is a vector space, $\varrho, \mu, \nu:\g\rightarrow\gl(W)$ are linear maps satisfying that for all $x,y \in \frkG,$
\begin{eqnarray}
\label{NS-rep1}\mu(x\vartriangleright y)&=&\mu(x)\mu(y)-\mu(y)\mu(x)-\mu(x\vartriangleleft y)+\mu(y\vartriangleleft x);\\
\label{NS-rep2}\nu(x\vartriangleleft y)&=&\mu(x)\nu(y)-\nu(y)\mu(x)+\nu(y)\nu(x)-\nu(y)\varrho(x);\\
\label{NS-rep3}\nu(x\vartriangleright y)&=&\mu(y)\varrho(x)-\varrho(x)\mu(y)+\varrho(x)\nu(y)-\varrho(y)\nu(x)\\
\nonumber&&+\varrho(y)\varrho(x)-\varrho(x)\varrho(y)+\varrho(y)\mu(x)-\mu(x)\varrho(y)+\varrho([x,y]_{\vartriangleleft,\vartriangleright}).
 \end{eqnarray}
\end{defi}
\begin{ex}
Let $(\frkG,\vartriangleleft,\vartriangleright)$ be an NS-Lie algebra. Define $\Ad,\rho^{L},\rho^{R}:\frkG\rightarrow\gl(\frkG)$ by
\begin{eqnarray*}
\Ad_{x}y=x\vartriangleright y, \quad \rho^{L}(x)y=x\vartriangleleft y\,\quad \rho^{R}(x)y=y\vartriangleleft x,\quad \forall x,y\in \frkG.
\end{eqnarray*}
Then $(\frkG;\Ad,\rho^{L},\rho^{R})$ is a representation of $(\frkG,\vartriangleleft,\vartriangleright)$  which is called the {\bf regular representation.}
\end{ex}

\begin{pro}
  Let $(W;\varrho,\mu,\nu)$ be a representation of an NS-Lie algebra $(\frkG,\vartriangleleft,\vartriangleright)$. Then $(\huaG=\frkG\oplus W,\vartriangleleft_{\huaG},\vartriangleright_{\huaG})$ is an NS-Lie algebra, and the semi-direct product bracket $\vartriangleleft_{\huaG}$ and $\vartriangleright_{\huaG}$ are given by
\begin{eqnarray}
(x+u)\vartriangleleft_{\huaG}(y+v)&=&x\vartriangleleft y+\mu(x)v+\nu(y)u,\\
~(x+u)\vartriangleright_{\huaG}(y+v)&=&x\vartriangleright y+\varrho(x)v-\varrho(y)u.
\end{eqnarray}
This NS-Lie algebra is called the {\bf semi-direct product} NS-Lie algebra.
\end{pro}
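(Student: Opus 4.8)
The plan is to verify directly that the pair $(\vartriangleleft_{\huaG},\vartriangleright_{\huaG})$ satisfies the two defining identities \eqref{NS-Lie-1} and \eqref{NS-Lie-2}, exploiting the fact that both are multilinear and that the splitting $\huaG=\frkG\oplus W$ resolves every expression into a $\frkG$-component and a $W$-component. First I would record the easy structural point: since $\vartriangleright$ is skew-symmetric on $\frkG$ and the formula $(x+u)\vartriangleright_{\huaG}(y+v)=x\vartriangleright y+\varrho(x)v-\varrho(y)u$ changes sign under the exchange $(x+u)\leftrightarrow(y+v)$, the operation $\vartriangleright_{\huaG}$ is skew-symmetric, so $(\huaG,\vartriangleleft_{\huaG},\vartriangleright_{\huaG})$ is at least a legitimate candidate and it remains only to check \eqref{NS-Lie-1} and \eqref{NS-Lie-2}.

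For each identity I would substitute $a=x+u$, $b=y+v$, $c=z+w$ and expand using the semi-direct formulas. The $\frkG$-component of each identity is exactly the corresponding identity \eqref{NS-Lie-1} or \eqref{NS-Lie-2} for $(\frkG,\vartriangleleft,\vartriangleright)$, which holds because $\frkG$ is an NS-Lie algebra by hypothesis. Since every product in the semi-direct structure is linear in the $W$-arguments and its $W$-component contains exactly one of $u,v,w$ in each monomial, the $W$-component of each identity separates into three pieces according to which slot carries the $W$-entry; each piece is an operator identity on $W$ that must vanish on its own.

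Collecting these pieces is where the representation axioms come in. For \eqref{NS-Lie-1}, the coefficient of $w$ (the $W$-entry in the third slot) reorganizes into precisely \eqref{NS-rep1}, while the coefficients of $u$ and of $v$ each reorganize into \eqref{NS-rep2} (in the two index arrangements $\nu(y\vartriangleleft z)$ and $\nu(x\vartriangleleft z)$). For \eqref{NS-Lie-2} I would first compute the lifted bracket $[\cdot,\cdot]_{\vartriangleleft_{\huaG},\vartriangleright_{\huaG}}$, whose $W$-component is governed by the combination $\varphi:=\mu-\nu+\varrho$; the coefficient of $w$ then reduces, upon expanding $\varphi$, to exactly \eqref{NS-rep3}. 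Because \eqref{NS-Lie-2} is invariant under cyclic permutation of $x,y,z$, the coefficients of $u$ and $v$ give only cyclic relabellings of the same equation and require no separate computation.

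The main obstacle is purely the bookkeeping: the lifted identity \eqref{NS-Lie-2} produces on the order of a dozen monomials in which the sign and the order of composition of $\mu,\nu,\varrho$ must be tracked carefully, and the appearance of $\varrho([x,y]_{\vartriangleleft,\vartriangleright})$ forces one to re-expand the bracket before the cancellation becomes visible. No genuinely new relation is needed, however: the axioms \eqref{NS-rep1}--\eqref{NS-rep3} are engineered precisely so that these $W$-components vanish, so the verification is a finite, if lengthy, matching of terms.
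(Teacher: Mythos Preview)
Your proposal is correct and follows essentially the same direct-verification approach as the paper: expand both identities on generic elements $x+u,\,y+v,\,z+w$ and check that the $\frkG$-component vanishes by the NS-Lie axioms on $\frkG$ while the $W$-component vanishes by \eqref{NS-rep1}--\eqref{NS-rep3}. Your organization---isolating the coefficients of $u,v,w$ separately and invoking the cyclic symmetry of \eqref{NS-Lie-2} to reduce three checks to one---is a tidy shortcut over the paper's brute-force expansion, but the underlying argument is the same.
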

\begin{proof}
  For all $x+u,y+v,z+w\in \huaG,$ by \eqref{NS-rep1} and \eqref{NS-rep2}, we have
\begin{eqnarray*}
&&((x+u)\vartriangleleft_{\huaG} (y+v))\vartriangleleft_{\huaG} (z+w)-(x+u)\vartriangleleft_{\huaG}((y+v)\vartriangleleft_{\huaG} (z+w))\\&&-((y+v)\vartriangleleft_{\huaG} (x+u))\vartriangleleft_{\huaG} (z+w)
+(y+v)\vartriangleleft_{\huaG}((x+u)\vartriangleleft_{\huaG} (z+w))\\
&&+((x+u)\vartriangleright_{\huaG} (y+v))\vartriangleleft_{\huaG} (z+w)\\
&=&(x\vartriangleleft y)\vartriangleleft z+\mu(x\vartriangleleft y)w+\nu(z)\mu(x)v+\nu(z)\nu(y)u-x\vartriangleleft(y\vartriangleleft z)-\mu(x)\mu(y)w-\mu(x)\nu(z)v\\&&-\nu(y\vartriangleleft z)u
-(y\vartriangleleft x)\vartriangleleft z-\mu(y\vartriangleleft x)w-\nu(z)\mu(y)u-\nu(z)\nu(x)v+y\vartriangleleft(x\vartriangleleft z)+\mu(y)\mu(x)w\\&&
+\mu(y)\nu(z)u+\nu(x\vartriangleleft z)v+(x\vartriangleright y)\vartriangleleft z+\mu(x\vartriangleright y)w+\nu(z)\varrho(x)v-\nu(z)\varrho(y)u\\
&=&0,
\end{eqnarray*}
which implies that \eqref{NS-Lie-1} holds. Similarly, by \eqref{NS-rep3} we have
\begin{eqnarray*}
&&(x+u)\vartriangleright_{\huaG}[y+v,z+w]_{\vartriangleleft_{\huaG},\vartriangleright_{\huaG}}+(x+u)\vartriangleleft_{\huaG} ((y+v)\vartriangleright_{\huaG} (z+w))\\
&&+(y+v)\vartriangleright_{\huaG}[z+w,x+u]_{\vartriangleleft_{\huaG},\vartriangleright_{\huaG}}++(y+v)\vartriangleleft_{\huaG}((z+w)\vartriangleright_{\huaG} (x+u))\\
&&+(z+w)\vartriangleright_{\huaG}[x+u,y+v]_{\vartriangleleft_{\huaG},\vartriangleright_{\huaG}}+(z+w)\vartriangleleft_{\huaG}((x+u)\vartriangleright_{\huaG} (y+v))\\
 &=&x\vartriangleright(y\vartriangleleft z)+\varrho(x)\mu(y)w+\varrho(x)\nu(z)v-\varrho(y\vartriangleleft z)u-x\vartriangleright(z\vartriangleleft y)-\varrho(x)\mu(z)v-\varrho(x)\nu(y)w\\
 &&+x\vartriangleright(y\vartriangleright z)+\varrho(x)\varrho(y)w-\varrho(x)\varrho(z)v-\varrho(y\vartriangleright z)u+y\vartriangleright(z\vartriangleleft x)+\varrho(y)\mu(z)u+\varrho(y)\nu(x)w\\
  &&-y\vartriangleright(x\vartriangleleft z)-\varrho(y)\mu(x)w-\varrho(y)\nu(z)u+\varrho(x\vartriangleleft z)v+y\vartriangleright(z\vartriangleright x)+\varrho(y)\varrho(z)u-\varrho(y)\varrho(x)w\\
  &&+z\vartriangleright(x\vartriangleleft y)+\varrho(z)\mu(x)v+\varrho(z)\nu(y)u-\varrho(x\vartriangleleft y)w-z\vartriangleright(y\vartriangleleft x)-\varrho(z)\mu(y)u-\varrho(z)\nu(x)v\\
  &&+z\vartriangleright(x\vartriangleright y)+\varrho(z)\varrho(x)v-\varrho(z)\varrho(y)u-\varrho(x\vartriangleright y)w+x\vartriangleleft(y\vartriangleright z)+\mu(x)\varrho(y)w-\mu(x)\varrho(z)v\\
  &&+y\vartriangleleft(z\vartriangleright x)+\mu(y)\varrho(z)u-\mu(y)\varrho(x)w+\nu(z\vartriangleright x)v+z\vartriangleleft (x\vartriangleright y)+\mu(z)\varrho(x)v-\mu(z)\rho(y)u\\
  &&+\varrho(z\vartriangleleft y)u-\varrho(z\vartriangleleft x)v-\varrho(z\vartriangleright x)v+\varrho(y\vartriangleleft x)w+\nu(y\vartriangleright z)u+\nu(x\vartriangleright y)w\\
  &=&0,
\end{eqnarray*}
where $[x+u,y+v]_{\vartriangleleft_{\huaG},\vartriangleright_{\huaG}}=(x+u)\vartriangleleft_{\huaG}(y+v)-(y+v)\vartriangleleft_{\huaG}(x+u)+(x+u)\vartriangleright_{\huaG}(y+v),$
which implies that \eqref{NS-Lie-2} holds. Therefore, $(\huaG,\vartriangleleft_{\huaG},\vartriangleright_{\huaG})$ is an NS-Lie algebra.
\end{proof}

\begin{pro}\label{Reynolds-NS-algebra}
Let $(\g,[\cdot,\cdot]_{\g},R)$ be a Reynolds Lie algebra. Then there exists an NS-Lie algebra $(\g,\vartriangleleft_{R},\vartriangleright_{R})$, where $\vartriangleleft_{R}$ and $\vartriangleright_{R}$ given by
\begin{eqnarray*}
x\vartriangleleft_{R}y=[Rx,y]_{\g}\quad\mbox{and} \quad x\vartriangleright_{R}y=-[Rx,Ry]_{\g},\quad \forall x,y\in \g.
\end{eqnarray*}

\end{pro}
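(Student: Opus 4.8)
The plan is to verify the two defining identities \eqref{NS-Lie-1} and \eqref{NS-Lie-2} directly, after recording two structural facts that will absorb almost all of the work. First, $\vartriangleright_R$ is skew-symmetric because $x\vartriangleright_R y=-[Rx,Ry]_\g=[Ry,Rx]_\g=-(y\vartriangleright_R x)$, using only skew-symmetry of the Lie bracket. Second, I would compute the associated bracket \eqref{NS-Lie-3} of this candidate NS-Lie structure:
\[
[x,y]_{\vartriangleleft_R,\vartriangleright_R}=[Rx,y]_\g-[Ry,x]_\g-[Rx,Ry]_\g=[Rx,y]_\g+[x,Ry]_\g-[Rx,Ry]_\g=[x,y]_R,
\]
so that the induced bracket of the NS-Lie algebra is precisely the induced Lie bracket $[\cdot,\cdot]_R$ of Proposition~\ref{induce-Lie-algebra}. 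From that proposition I would also reuse the identity $R[x,y]_R=[Rx,Ry]_\g$, which is the engine of the whole argument.

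For \eqref{NS-Lie-1}, I would substitute the definitions and group the three summands whose outer bracket carries $R(\,\cdot\,)$ in the first slot. These combine into $[R\big([Rx,y]_\g+[x,Ry]_\g-[Rx,Ry]_\g\big),z]_\g=[R[x,y]_R,z]_\g=[[Rx,Ry]_\g,z]_\g$, where the last step is exactly the Reynolds identity \eqref{Reynolds-operator} in the form $R[x,y]_R=[Rx,Ry]_\g$. The two remaining summands are $-[Rx,[Ry,z]_\g]_\g+[Ry,[Rx,z]_\g]_\g$, so the whole left-hand side of \eqref{NS-Lie-1} collapses to $[[Rx,Ry]_\g,z]_\g-[Rx,[Ry,z]_\g]_\g+[Ry,[Rx,z]_\g]_\g$, which vanishes by the Jacobi identity of $\g$ applied to the triple $Rx,Ry,z$.

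For \eqref{NS-Lie-2}, I would again replace each internal bracket $[y,z]_{\vartriangleleft_R,\vartriangleright_R}$ by $[y,z]_R$ and use $R[y,z]_R=[Ry,Rz]_\g$. Then the three $\vartriangleright_R$-terms become the cyclic sum of $-[Rx,[Ry,Rz]_\g]_\g$, while the three $\vartriangleleft_R$-terms, since $x\vartriangleleft_R(y\vartriangleright_R z)=[Rx,-[Ry,Rz]_\g]_\g$, produce the same cyclic sum; together they give twice $\big([Rx,[Ry,Rz]_\g]_\g+[Ry,[Rz,Rx]_\g]_\g+[Rz,[Rx,Ry]_\g]_\g\big)$, which is zero by the Jacobi identity. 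The only step with real content is the reduction in \eqref{NS-Lie-1}: the defining form of a Reynolds operator is exactly what turns $R[x,y]_R$ into the honest bracket $[Rx,Ry]_\g$, after which both identities reduce to Jacobi in $\g$; the rest is sign bookkeeping.
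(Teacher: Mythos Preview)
Your proof is correct and follows essentially the same approach as the paper's: direct substitution of the definitions into \eqref{NS-Lie-1} and \eqref{NS-Lie-2}, followed by the Reynolds identity and the Jacobi identity of $\g$. The paper's proof is terser (it writes out the five terms for \eqref{NS-Lie-1} and asserts they vanish, then says ``similarly'' for \eqref{NS-Lie-2}), while you organize the same computation more transparently by first identifying $[x,y]_{\vartriangleleft_R,\vartriangleright_R}$ with the induced bracket $[x,y]_R$ and using $R[x,y]_R=[Rx,Ry]_\g$ as a black box. One trivial slip: in your treatment of \eqref{NS-Lie-2} the combined cyclic sum carries an overall factor of $-2$, not $+2$, but this is immaterial since the Jacobi cyclic sum vanishes either way.
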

\begin{proof}
For all $x,y,z\in \g,$ we have
\begin{eqnarray*}
 && (x\vartriangleleft_{R} y)\vartriangleleft_{R} z-x\vartriangleleft_{R}(y\vartriangleleft_{R} z)-(y\vartriangleleft_{R} x)\vartriangleleft_{R} z+y\vartriangleleft_{R}(x\vartriangleleft_{R} z)+(x\vartriangleright_{R} y)\vartriangleleft_{R} z\\
  &=&[R[Rx,y]_{\g},z]_{\g}-[Rx,[Ry,z]_{\g}]_{\g}-[R[Ry,x]_{\g},z]_{\g}+[Ry,[Rx,z]_{\g}]_{\g}-[R[Rx,Ry]_{\g},z]_{\g}\\
  &=&0,
\end{eqnarray*}
which implies that \eqref{NS-Lie-1} holds. Similarly, we can verify that \eqref{NS-Lie-2} holds. Hence the conclusion follows.
\end{proof}

A representation  of a Reynolds Lie algebra induces a representation of the underlying NS-Lie algebra.
\begin{pro}
 Let $(W;T,\rho)$ be a representation of a Reynolds Lie algebra $(\g,[\cdot,\cdot]_{\g},R)$.
 Then $(W; \varrho,\mu, \nu)$ is a representation of the NS-Lie algebra $(\g,\vartriangleleft_{R},\vartriangleright_{R}),$ where
 $$\varrho(x)u=-\rho(Rx)(Tu), \quad \mu=\rho\circ R,\quad \nu(x)u=-\rho(x)(Tu),\quad \forall x\in \g,u\in W.$$
\end{pro}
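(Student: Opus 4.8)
The plan is to verify the three defining identities \eqref{NS-rep1}, \eqref{NS-rep2} and \eqref{NS-rep3} for the quadruple $(W;\varrho,\mu,\nu)$ with $\mu=\rho\circ R$, $\nu(x)u=-\rho(x)(Tu)$ and $\varrho(x)u=-\rho(Rx)(Tu)$. Rather than expand each identity by hand, I would route the argument through the two semi-direct product constructions already at our disposal. First, form the semi-direct product Reynolds Lie algebra $(\g\oplus W,[\cdot,\cdot]_{\ltimes},\widehat{R})$ attached to $(W;T,\rho)$, and apply Proposition \ref{Reynolds-NS-algebra} to it: this produces an NS-Lie algebra $(\g\oplus W,\vartriangleleft_{\widehat{R}},\vartriangleright_{\widehat{R}})$ with $X\vartriangleleft_{\widehat{R}}Y=[\widehat{R}X,Y]_{\ltimes}$ and $X\vartriangleright_{\widehat{R}}Y=-[\widehat{R}X,\widehat{R}Y]_{\ltimes}$.

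The key step is a short computation of these two brackets on $X=x+u$ and $Y=y+v$ using the explicit formulas for $[\cdot,\cdot]_{\ltimes}$ and $\widehat{R}$. One finds
\[
(x+u)\vartriangleleft_{\widehat{R}}(y+v)=[Rx,y]_{\g}+\rho(Rx)v-\rho(y)(Tu),
\]
\[
(x+u)\vartriangleright_{\widehat{R}}(y+v)=-[Rx,Ry]_{\g}-\rho(Rx)(Tv)+\rho(Ry)(Tu).
\]
Comparing with $x\vartriangleleft_{R}y=[Rx,y]_{\g}$ and $x\vartriangleright_{R}y=-[Rx,Ry]_{\g}$, these are exactly the semi-direct product NS-Lie brackets $\vartriangleleft_{\huaG},\vartriangleright_{\huaG}$ built from the base NS-Lie algebra $(\g,\vartriangleleft_{R},\vartriangleright_{R})$ and the candidate data $(W;\varrho,\mu,\nu)$, with the identifications $\mu(x)v=\rho(Rx)v$, $\nu(y)u=-\rho(y)(Tu)$ and $\varrho(x)v=-\rho(Rx)(Tv)$ matching precisely the definitions in the statement. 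Thus $\widehat{R}$ equips $\g\oplus W$ with the semi-direct product NS-Lie structure determined by $(W;\varrho,\mu,\nu)$.

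Finally, I would read off the representation axioms by decomposition. Because each monomial appearing in the NS-Lie axioms \eqref{NS-Lie-1} and \eqref{NS-Lie-2} for $(\g\oplus W,\vartriangleleft_{\huaG},\vartriangleright_{\huaG})$ is linear in exactly one of the three $W$-inputs, these axioms split along the grading into a purely $\g$-valued part — the NS-Lie axioms for $(\g,\vartriangleleft_{R},\vartriangleright_{R})$, valid by Proposition \ref{Reynolds-NS-algebra} — together with $W$-valued components obtained by isolating the coefficients of the three $W$-inputs, which are precisely \eqref{NS-rep1}, \eqref{NS-rep2} and \eqref{NS-rep3}. Since the full expressions vanish (Proposition \ref{Reynolds-NS-algebra} applied to the semi-direct product Reynolds Lie algebra), so do the $W$-components, and $(W;\varrho,\mu,\nu)$ is a representation.

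The same conclusion can instead be reached by brute force, rewriting every $\rho([\,\cdot\,,\,\cdot\,]_{\g})$ as the commutator $[\rho(\cdot),\rho(\cdot)]$ and then substituting the Reynolds identity \eqref{Reynolds-operator} and the compatibility \eqref{representation-Reynolds-Liealg}. For instance \eqref{NS-rep1} collapses quickly: its right-hand side equals $\rho\big([Rx,Ry]_{\g}-R[Rx,y]_{\g}-R[x,Ry]_{\g}\big)$, and \eqref{Reynolds-operator} turns the argument into $-R[Rx,Ry]_{\g}$, matching $\mu(x\vartriangleright_{R}y)=-\rho(R[Rx,Ry]_{\g})$. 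I expect \eqref{NS-rep3} to be the main obstacle along this direct route: it carries nine terms on the right together with a bracket $\varrho([x,y]_{\vartriangleleft_{R},\vartriangleright_{R}})$, so the bookkeeping of commutators and of the two-fold applications of $T$ and $R$ arising from \eqref{representation-Reynolds-Liealg} is considerably heavier than for the other two. The semi-direct product route is preferable precisely because it confines this bookkeeping to the already-established NS-Lie axioms on $\g\oplus W$.
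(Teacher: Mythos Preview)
Your argument is correct and takes a genuinely different route from the paper. The paper proceeds by direct computational verification: it expands each of \eqref{NS-rep1}, \eqref{NS-rep2} and \eqref{NS-rep3} for the given $\varrho,\mu,\nu$, rewrites every $\rho([a,b]_{\g})$ as the commutator $[\rho(a),\rho(b)]$, and then applies the Reynolds identity \eqref{Reynolds-operator} together with the compatibility \eqref{representation-Reynolds-Liealg} to make each expression collapse to zero --- exactly the ``brute force'' route you sketch at the end, carried out in full for all three identities including the nine-term \eqref{NS-rep3}. Your primary approach instead factors through two results already established earlier in the section: the semi-direct product Reynolds Lie algebra $(\g\oplus W,[\cdot,\cdot]_{\ltimes},\widehat{R})$ and Proposition \ref{Reynolds-NS-algebra}. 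Once one observes that the induced NS-Lie brackets $\vartriangleleft_{\widehat{R}},\vartriangleright_{\widehat{R}}$ coincide with the candidate semi-direct NS-Lie brackets built from $(\varrho,\mu,\nu)$, the NS-Lie axioms on $\g\oplus W$ specialize (by setting two of the three $W$-inputs to zero) to the representation axioms. What this buys you is that the heavy bookkeeping for \eqref{NS-rep3} is absorbed into the already-verified NS-Lie axiom on the larger algebra; what the paper's direct computation buys is independence from the semi-direct product machinery, since the paper states the semi-direct product proposition for NS-Lie algebras only in the forward direction. The converse you need is not stated explicitly, but as you note it is immediate from the fact that the $W$-component of each NS-Lie axiom is linear in the three $W$-inputs, so isolating coefficients recovers \eqref{NS-rep1}--\eqref{NS-rep3}.
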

\begin{proof}
By the fact $(W;T,\rho)$ is a representation of a Reynolds Lie algebra $(\g,[\cdot,\cdot]_{\g},R)$, for any $x,y\in \g, u\in W$, we have
\begin{eqnarray*}
&&\mu(x\vartriangleright_{R}y)(u)-\mu(x)\mu(y)(u)+\mu(y)\mu(x)(u)+\mu(x\vartriangleleft_{R}y)(u)-\mu(y\vartriangleleft_{R}x)(u)\\
&=&-\rho(R[Rx,Ry]_{\g})(u)-\rho(Rx)\rho(Ry)(u)+\rho(Ry)\rho(Rx)(u)+\rho(R[Rx,y]_{\g})(u)-\rho(R[Ry,x]_{\g})(u)\\
&=&0,
\end{eqnarray*}
which implies that \eqref{NS-rep1} holds.

Similarly, we have
\begin{eqnarray*}
&&\nu(x\vartriangleleft_{R}y)(u)-\mu(x)\nu(y)(u)+\nu(y)\mu(x)(u)-\nu(y)\nu(x)(u)+\nu(y)\varrho(x)(u)\\
&=&-\rho([Rx,y]_{\g})(Tu)+\rho(Rx)\rho(y)(Tu)-\rho(y)T(\rho(Rx)(u))-\rho(y)T(\rho(x)(Tu))+\rho(y)T(\rho(Rx)(Tu))\\
&=&-\rho([Rx,y]_{\g})(Tu)+\rho(Rx)\rho(y)(Tu)-\rho(y)T(\rho(Rx)(u))-\rho(y)T(\rho(x)(Tu))\\
&&-\rho(y)\rho(Rx)(Tu)+\rho(y)T(\rho(x)(Tu))+\rho(y)T(\rho(Rx)(u))\\
&=&0,
\end{eqnarray*}
and
\begin{eqnarray*}
&&\nu(x\vartriangleright_{R}y)(u)-\mu(y)\varrho(x)(u)+\varrho(x)\mu(y)(u)-\varrho(x)\nu(y)(u)+\varrho(y)\nu(x)(u)\\
&&-\varrho(y)\varrho(x)(u)+\varrho(x)\varrho(y)(u)-\varrho(y)\mu(x)(u)+\mu(x)\varrho(y)(u)-\varrho([x,y]_{\vartriangleleft_{R},\vartriangleright_{R}})(u)\\
&=&\rho([Rx,Ry]_{\g})(Tu)+\rho(Ry)\rho(Rx)(Tu)-\rho(Rx)(T\rho(Ry)(u))-\rho(Rx)(T\rho(y)(Tu))\\
&&+\rho(Ry)(T\rho(x)(Tu))-\rho(Ry)(T\rho(Rx)(Tu))+\rho(Rx)(T\rho(Ry)(Tu))+\rho(Ry)(T\rho(Rx)(u))\\
&&-\rho(Rx)(\rho(Ry)(Tu))+\rho([Rx,Ry]_{\g})(Tu)\\
&=&0,
\end{eqnarray*}
which implies that \eqref{NS-rep2} and \eqref{NS-rep3} hold.
Thus $(W; \varrho,\mu, \nu)$ is a representation of the NS-Lie algebra $(\g,\vartriangleleft_{R},\vartriangleright_{R}).$
\end{proof}}

\section{Matched pairs and Manin triples of Reynolds Lie algebras}\label{sec:mp}

In this section,   we introduce the notion of  matched pairs of Reynolds Lie algebras and Manin triples of Reynolds Lie algebras, and show that Manin triples of Reynolds Lie algebras and certain matched pairs of Reynolds Lie algebras are equivalent.

\subsection{Matched pairs of Reynolds Lie algebras}
In this subsection,  we introduce the notion of  matched pairs of Reynolds Lie algebras and show that a matched pair of Reynolds Lie algebras gives rise to an induced matched pair of Lie algebras.

Matched pairs of Lie algebras are also known as  twilled Lie algebras \rm{(\cite{KM})} or double Lie algebras \rm{(\cite{Lu2})}.

A {\bf matched pair of Lie algebras} (\cite{Majid,18}) consists of a pair of Lie algebras  $((\g,[\cdot,\cdot]_{\g}),(\h,[\cdot,\cdot]_{\h}))$, a  representation $\rho: \g\to\gl(\h)$ of $\g$ on $\h$ and a   representation $\mu: \h\to\gl(\g)$ of $\h$ on $\g$ such that
\begin{eqnarray}
\label{eq:mp1}\rho(x) [\xi,\eta]_{\h}&=&[\rho(x)\xi,\eta]_{\h}+[\xi,\rho(x) \eta]_{\h}+\rho\big(\mu(\eta)x\big)\xi-\rho\big(\mu(\xi)x\big) \eta,\\
\label{eq:mp2}\mu(\xi) [x, y]_{\g}&=&[\mu(\xi)x,y]_{\g}+[x,\mu(\xi) y]_{\g}+\mu\big(\rho(y) \xi\big)x-\mu\big(\rho(x)\xi\big)y,
\end{eqnarray}
 for all $x,y\in \g$ and $\xi,\eta\in \h$. We will denote a matched pair of Lie algebras by $(\g,\h;\rho,\mu)$, or simply by $(\g,\h)$.

The following alternative description of matched pairs of Lie algebras is well-known.
\begin{pro}\label{eqdefi}
Let $(\g,\h;\rho,\mu)$ be a matched pair of Lie algebras. Then there is a Lie algebra structure on the direct sum space $\g\oplus \h$ with the Lie bracket $[\cdot,\cdot]_{\bowtie }$ given by
\begin{eqnarray}\label{mathched-pair-Lie}
  [x+\xi,y+\eta]_{\bowtie }=\big([x,y]_\g+\mu(\xi)y-\mu(\eta) x\big)+\big([\xi,\eta]_\h+\rho(x)\eta-\rho(y) \xi),
\end{eqnarray}
where for all $x,y\in\g,\ \xi,\eta\in\h.$ Denote this Lie algebra by $\g\bowtie \h$.

Conversely, if $(\g\oplus \h,[\cdot,\cdot])$ is a Lie algebra such that $\g$ and $\h$ are Lie subalgebras, then  $(\g,\h;\rho,\mu)$ is a matched pair of Lie algebras, where the representations $\rho: \g\to\gl(\h)$  and $\mu: \h\to\gl(\g)$ are determined  by
\[[x,\xi] =\rho(x) \xi-\mu(\xi) x,\quad \forall x\in \g,\xi\in \h.\]
\end{pro}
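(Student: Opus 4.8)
The bracket $[\cdot,\cdot]_\bowtie$ is manifestly bilinear and skew-symmetric, so the plan is to reduce the entire statement to the Jacobi identity. Since $[\cdot,\cdot]_\bowtie$ is trilinear, it suffices to verify the Jacobi identity on homogeneous triples, i.e. on triples each of whose entries lies in $\g$ or in $\h$. By skew-symmetry there are only four cases, and the case with two entries in $\g$ and the case with two entries in $\h$ are interchanged by the built-in symmetry $\g\leftrightarrow\h$, $\rho\leftrightarrow\mu$ of the definition \eqref{mathched-pair-Lie}; so really only three computations are needed.

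For the forward direction I would run through the cases as follows. When all three arguments lie in $\g$ (resp. all in $\h$), the bracket $[\cdot,\cdot]_\bowtie$ restricts to $[\cdot,\cdot]_\g$ (resp. $[\cdot,\cdot]_\h$), and the Jacobiator vanishes by the Jacobi identity of $\g$ (resp. $\h$). For a triple $x,y\in\g$, $\xi\in\h$, I would expand the Jacobiator using \eqref{mathched-pair-Lie} and separate it into its $\g$-component and its $\h$-component. The $\h$-component collects exactly $\rho([x,y]_\g)\xi-\rho(x)\rho(y)\xi+\rho(y)\rho(x)\xi$, which vanishes because $\rho$ is a representation of $\g$; the $\g$-component reassembles into precisely the compatibility condition \eqref{eq:mp2}. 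Symmetrically, for a triple $x\in\g$, $\xi,\eta\in\h$, the $\g$-component vanishes because $\mu$ is a representation of $\h$, and the $\h$-component is exactly \eqref{eq:mp1}. This exhausts the cases and shows $\g\bowtie\h$ is a Lie algebra.

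For the converse, assume $\g\oplus\h$ carries a Lie bracket making $\g$ and $\h$ subalgebras. For $x\in\g$ and $\xi\in\h$, write $[x,\xi]=\rho(x)\xi-\mu(\xi)x$ with $\rho(x)\xi:=\pr_\h[x,\xi]\in\h$ and $\mu(\xi)x:=-\pr_\g[x,\xi]\in\g$; bilinearity of the ambient bracket makes $\rho:\g\to\gl(\h)$ and $\mu:\h\to\gl(\g)$ linear. To recover the matched-pair axioms I would apply the Jacobi identity of $\g\oplus\h$ to the two types of mixed triples and project onto each summand: applied to $(x,y,\xi)$ with $x,y\in\g$, $\xi\in\h$, projection to $\h$ yields that $\rho$ is a representation while projection to $\g$ yields \eqref{eq:mp2}; applied to $(x,\xi,\eta)$ with $x\in\g$, $\xi,\eta\in\h$, projection to $\g$ yields that $\mu$ is a representation while projection to $\h$ yields \eqref{eq:mp1}. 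Hence $(\g,\h;\rho,\mu)$ is a matched pair, and the two constructions are mutually inverse directly from the defining formulas.

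The only genuine difficulty is bookkeeping: in the mixed cases the Jacobiator of \eqref{mathched-pair-Lie} produces roughly a dozen terms, and the crux is to sort them cleanly into $\g$- and $\h$-parts and to recognize the $\g$-part as \eqref{eq:mp2} (resp. the $\h$-part as \eqref{eq:mp1}), rather than to invoke any nontrivial idea. Keeping the sign convention $[x,\xi]=\rho(x)\xi-\mu(\xi)x$ consistent between the two directions is the one place where care is required.
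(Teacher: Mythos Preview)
Your proposal is correct and is the standard case-by-case verification; the paper itself does not give a proof of this proposition, stating it as a well-known result and citing \cite{Majid,18,KM,Lu2}. There is nothing to compare against, and your outline is exactly the argument one finds in those references.
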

\emptycomment{
\begin{thm}\label{thm:matched-Leibniz}
 Let $(\g,[\cdot,\cdot]_\g,P)$ be an averaging Lie algebra with the Leibniz algebra  $\g_P$.  Then $(\g_P,\g;\rho,\mu)$ is a {\bf matched pair of Leibniz algebras}, where $\rho:\g_P\to\gl(\g)$ and $\mu:\g\to \gl(\g_P)$ are given by
\begin{equation}\label{eq:mpr}
 \rho(\xi)(x)= B[x,\xi]_\g -[x,B\xi]_\g,\quad \mu(x)(\xi)=  [x,\xi]_\g,\quad \forall x\in\g, \xi\in\g_B.
\end{equation}
Moreover, the corresponding Lie algebra $\g_B\bowtie \g$ is exactly the Lie algebra $(\g\oplus \g,[\cdot,\cdot]_D)$ given in Proposition \ref{doublelie}.
\end{thm}

\begin{proof}
By Proposition \ref{doublelie},  it is obvious that both $\g_B$ and $\g$ are Lie subalgebras of the Lie algebra $(\g\oplus \g,[\cdot,\cdot]_D )$, and
$$
{[(\xi,0),(0,x)]_D} =(\rho(\xi)(x), -\mu(x)(\xi)).
$$ Thus,  $(\g_B,\g;\rho,\mu)$ is a matched pair of Lie algebras. The other conclusion is obvious.
\end{proof}}

Now we introduce the notion of matched pairs of Reynolds Lie algebras.

\begin{defi}\label{MPR}
A {\bf matched pair of Reynolds  Lie algebras} consists of a pair of Reynolds  Lie algebras  $((\g,[\cdot,\cdot]_{\g},R),(\h,[\cdot,\cdot]_{\h},\huaR))$, a  representation  $\rho: \g\to \gl(\h)$ of the Reynolds  Lie algebra $(\g,[\cdot,\cdot]_{\g},R)$ on $(\h,\huaR)$ and a   representation $\mu: \h\to \gl(\g)$ of the Reynolds  Lie algebra $(\h,[\cdot,\cdot]_{\h},\huaR)$ on $(\g,R)$ such that
  $(\g,\h; \rho,\mu)$ is a matched pair of Lie algebras.
\end{defi}

We will denote a matched pair of Reynolds  Lie algebras by $((\g,R),(\h,\huaR);\rho,\mu)$, or simply by $((\g,R),(\h,\huaR))$.

It is straightforward to obtain the following alternative characterization of matched pairs of Reynolds Lie algebras.
\begin{pro}
 Let $(\g,[\cdot,\cdot]_{\g},R)$ and $(\h,[\cdot,\cdot]_{\h},\huaR)$ be two Reynolds
 Lie algebras, $\rho: \g\to \gl(\h)$ be a  representation of the
 Lie algebra $\g$ on $\h$ and $\mu: \h\to \gl(\g)$ be a
 representation of the Lie algebra $\h$ on $\g$.
 Then $((\g,R),(\h,\huaR);\rho,\mu)$ is a matched pair of Reynolds
 Lie algebras if and only if $(\g,\h;\rho,\mu)$ is a matched pair of Lie algebras and the following equalities hold:
 \begin{eqnarray}
\label{Rey-1}\rho(Rx)(\huaR \xi)&=&\huaR\Big(\rho(x)(\huaR \xi)+\rho(Rx)(\xi)-\rho(Rx)(\huaR \xi)\Big),\\
\label{Rey-2}\mu(\huaR \xi)(Rx)&=&R\Big(\mu(\xi)(Rx)+\mu(\huaR \xi)(x)-\mu(\huaR \xi)(Rx)\Big),
\end{eqnarray}
for all $x\in \g$ and $\xi\in \h$.
\end{pro}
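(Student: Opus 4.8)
The plan is a direct unwinding of Definition~\ref{MPR}, so the argument is structural rather than computational. By that definition, $((\g,R),(\h,\huaR);\rho,\mu)$ is a matched pair of Reynolds Lie algebras precisely when three conditions hold simultaneously: (a) $\rho$ is a representation of the Reynolds Lie algebra $(\g,[\cdot,\cdot]_{\g},R)$ on $(\h,\huaR)$; (b) $\mu$ is a representation of the Reynolds Lie algebra $(\h,[\cdot,\cdot]_{\h},\huaR)$ on $(\g,R)$; and (c) $(\g,\h;\rho,\mu)$ is a matched pair of Lie algebras. Since the hypothesis of the proposition already grants that $\rho$ and $\mu$ are representations of the underlying Lie algebras, the whole proof reduces to isolating the residual Reynolds compatibility hidden inside (a) and (b) and checking that it is captured exactly by \eqref{Rey-1} and \eqref{Rey-2}.

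First I would expand (a). By the definition of a representation of a Reynolds Lie algebra, the assertion that $\rho$ is a representation of $(\g,[\cdot,\cdot]_{\g},R)$ on $(\h,\huaR)$ means that $\rho$ is a Lie algebra representation of $\g$ on $\h$ together with condition \eqref{representation-Reynolds-Liealg} specialized to $W=\h$ and $T=\huaR$. Making these substitutions in \eqref{representation-Reynolds-Liealg} produces
\begin{equation*}
\rho(Rx)(\huaR\xi)=\huaR\big(\rho(x)(\huaR\xi)+\rho(Rx)(\xi)-\rho(Rx)(\huaR\xi)\big),
\end{equation*}
which is exactly \eqref{Rey-1}. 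Because the Lie-representation part is assumed in the hypothesis, condition (a) is therefore equivalent to \eqref{Rey-1}.

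Next I would expand (b), where the roles of the two algebras are interchanged. Here $\mu$ is a representation of the Reynolds Lie algebra $(\h,[\cdot,\cdot]_{\h},\huaR)$ on $(\g,R)$, so in \eqref{representation-Reynolds-Liealg} the ambient Reynolds Lie algebra is $(\h,\huaR)$, the representation is $\mu$, the carrier space is $W=\g$, and the operator is $T=R$; one must also relabel the algebra element as $\xi\in\h$ and the module element as $x\in\g$. Carrying out these substitutions gives
\begin{equation*}
\mu(\huaR\xi)(Rx)=R\big(\mu(\xi)(Rx)+\mu(\huaR\xi)(x)-\mu(\huaR\xi)(Rx)\big),
\end{equation*}
which is precisely \eqref{Rey-2}. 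Hence condition (b) is equivalent to \eqref{Rey-2}. Combining the two equivalences with condition (c), which is taken verbatim from the statement, establishes both implications of the ``if and only if'' at once.

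There is no genuine analytic obstacle here: the statement is an exercise in matching definitions, and no computation with the matched-pair axioms \eqref{eq:mp1}--\eqref{eq:mp2} is needed. The only point that demands care is the symmetric bookkeeping in step (b), where one must correctly interchange $\g\leftrightarrow\h$, $R\leftrightarrow\huaR$, and $x\leftrightarrow\xi$ when specializing the general representation axiom \eqref{representation-Reynolds-Liealg}; once that relabeling is performed cleanly, the equivalence is immediate.
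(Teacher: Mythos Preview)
Your proposal is correct and is precisely the straightforward unwinding of Definition~\ref{MPR} that the paper has in mind; the paper in fact omits the proof entirely, remarking only that the characterization is ``straightforward to obtain''. Your careful bookkeeping in specializing \eqref{representation-Reynolds-Liealg} in both directions is exactly what is needed.
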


On the double of a matched pair of Lie algebras, there is a Lie algebra structure.
There is also a Reynolds  Lie algebra structure on the double of a matched pair of Reynolds Lie algebras.
\begin{pro}\label{Rey-on}
Let $(\g,[\cdot,\cdot]_{\g},R)$ and $(\h,[\cdot,\cdot]_{\h},\huaR)$ be Reynolds Lie algebras, $\rho:\g\to \gl(\h)$ and $\mu:\h\to \gl(\g)$ are linear maps. Then $((\g,R),(\h,\huaR);\rho,\mu)$ is a matched pair of Reynolds  Lie algebras if and only if $(\g,\h;\rho,\mu)$ is a matched pair of Lie algebras and
\[R\oplus \huaR:\g\oplus \h\to \g\oplus \h,\qquad x+\xi\mapsto Rx+\huaR\xi,\]
is a Reynolds  operator on the Lie algebra $\g\bowtie \h$.

\end{pro}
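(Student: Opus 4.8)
The plan is to exploit the fact that the defining Reynolds identity \eqref{Reynolds-operator} is \emph{bilinear} in its two arguments. Writing $\widehat{R}=R\oplus\huaR$, the requirement that $\widehat{R}$ be a Reynolds operator on $\g\bowtie\h$ is
\begin{equation*}
F(a,b):=[\widehat{R}a,\widehat{R}b]_{\bowtie}-\widehat{R}\big([\widehat{R}a,b]_{\bowtie}+[a,\widehat{R}b]_{\bowtie}-[\widehat{R}a,\widehat{R}b]_{\bowtie}\big)=0,\quad \forall a,b\in\g\oplus\h .
\end{equation*}
Since $\widehat{R}$ is linear and $[\cdot,\cdot]_{\bowtie}$ is bilinear, $F$ is bilinear; and a short computation using the skew-symmetry of $[\cdot,\cdot]_{\bowtie}$ shows $F(b,a)=-F(a,b)$. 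Hence $F$ vanishes identically if and only if it vanishes on the pairs drawn from the spanning subspaces $\g$ and $\h$, and by skew-symmetry the case $a\in\h,\,b\in\g$ is redundant. This leaves exactly three cases: $a,b\in\g$; $a,b\in\h$; and the mixed case $a\in\g,\,b\in\h$.

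In the two pure cases the reduction is immediate. For $a=x,\,b=y\in\g$ the bracket $[\cdot,\cdot]_{\bowtie}$ restricts to $[\cdot,\cdot]_\g$ and $\widehat{R}|_\g=R$, so $F(x,y)=0$ is precisely \eqref{Reynolds-operator} for $(\g,R)$, which holds because $(\g,[\cdot,\cdot]_\g,R)$ is a Reynolds Lie algebra; the case $a,b\in\h$ is identical using $(\h,[\cdot,\cdot]_\h,\huaR)$. Thus, granting that $(\g,\h;\rho,\mu)$ is a matched pair of Lie algebras, the single remaining content of ``$\widehat{R}$ is a Reynolds operator'' is the vanishing of $F$ on the mixed case.

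For the mixed case $a=x\in\g$, $b=\eta\in\h$, formula \eqref{mathched-pair-Lie} gives $[x,\eta]_{\bowtie}=\rho(x)\eta-\mu(\eta)x$ with $\rho(x)\eta\in\h$ and $\mu(\eta)x\in\g$. Substituting $\widehat{R}x=Rx$, $\widehat{R}\eta=\huaR\eta$, expanding $F(x,\eta)$, and projecting onto the two summands $\g$ and $\h$ (which $\widehat{R}$ preserves), the $\h$-component of $F(x,\eta)=0$ becomes exactly \eqref{Rey-1} and the $\g$-component becomes exactly \eqref{Rey-2}; conversely \eqref{Rey-1} and \eqref{Rey-2} reassemble into $F(x,\eta)=0$. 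Since the preceding proposition characterizes a matched pair of Reynolds Lie algebras by the matched-pair-of-Lie-algebras condition together with \eqref{Rey-1} and \eqref{Rey-2}, this completes the equivalence. The only labor is the component bookkeeping in this last case, which is routine once the bilinearity-and-skew-symmetry reduction has cut the problem down to it.
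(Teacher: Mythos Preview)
Your argument is correct and follows essentially the same computation as the paper: expand the Reynolds defect for $R\oplus\huaR$ on $\g\bowtie\h$ and match the resulting pieces to \eqref{Rey-1}, \eqref{Rey-2}, and the Reynolds identities on $\g$ and $\h$. Your bilinearity/skew-symmetry reduction to the three cases is a tidy organizational improvement over the paper's single monolithic expansion on general elements $x+\xi,\,y+\eta$, and it makes the ``if and only if'' transparent in one pass (the paper handles the converse by ``similarly'').
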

\begin{proof}
Let $((\g,R),(\h,\huaR);\rho,\mu)$ be a matched pair of Reynolds  Lie algebras. By Definition \ref{MPR}, $(\g,\h;\rho,\mu)$ is a matched pair of Lie algebras. By \eqref{Rey-1} and \eqref{Rey-2}, for all $x,y,\xi,\eta\in\g\oplus\h,$ we have
\begin{eqnarray*}
  &&[(R\oplus \huaR)(x+\xi),(R\oplus \huaR)(y+\eta)]_{\bowtie}-(R\oplus \huaR)[(R\oplus \huaR)(x+\xi),y+\eta]_{\bowtie}\\
 &&-(R\oplus \huaR)[x+\xi,(R\oplus \huaR)(y+\eta)]_{\bowtie}+(R\oplus \huaR)[(R\oplus \huaR)(x+\xi),(R\oplus \huaR)(y+\eta)]_{\bowtie}\\
  &=&[Rx,Ry]_{\g}+\mu(\huaR\xi)(Ry)-\mu(\huaR\eta)(Rx)+[\huaR\xi,\huaR\eta]_{\h}+\rho(Rx)(\huaR\eta)-\rho(Ry)(\huaR\xi)\\
  &&-R[Rx,y]_{\g}-R(\mu(\huaR\xi)y)+R(\mu(\eta)(Rx))-\huaR[\huaR\xi,\eta]_{\h}-\huaR(\rho(Rx)\eta)+\huaR(\rho(y)(\huaR\xi))\\
  &&-R[x,Ry]_{\g}-R(\mu(\xi)(Ry))+R(\mu(\huaR\eta)(x))-\huaR[\xi,\huaR\eta]_{\h}-\huaR(\rho(x)(\huaR\eta))+\huaR(\rho(Ry)(\xi))\\
  &&+R[Rx,Ry]_{\g}+R(\mu(\huaR\xi)(Ry))-R(\mu(\huaR\eta)(Rx))+\huaR[\huaR\xi,\huaR\eta]_{\h}+\huaR(\rho(Rx)(\huaR\eta))-\huaR(\rho(Ry)(\huaR\xi))\\
  &=&0.
\end{eqnarray*}
Therefore $R\oplus \huaR$ is a Reynolds  operator on the Lie algebra $\g\bowtie \h$.

The converse part can be proved similarly
by putting $\xi=y=0$ and $x=\eta=0$ respectively in the last equality.
\end{proof}

Let $((\g,R),(\h,\huaR);\rho,\mu)$ be a matched pair of Reynolds Lie algebras. Then there are three induced Lie algebras $\g_{R}, \h_{\huaR}$ and $(\g\bowtie \h)_{R\oplus \huaR}$ coming from the three Reynolds  operators $R:\g\to \g, \huaR:\h\to \h$ and $R\oplus \huaR:\g\oplus \h\to \g\oplus \h$, respectively. In the following we will show that a matched pair of Reynolds  Lie algebras gives rise to an induced matched pair of Reynolds Lie algebras.

\begin{thm}\label{demp}
Let $((\g,R),(\h,\huaR);\rho,\mu)$ be a matched pair of Reynolds Lie algebras, and $\g_R,~\h_\huaR$ be the induced Lie algebras. Then $(\g_{R},\h_{\huaR};\rho_{(R,\huaR)},\mu_{(R,\huaR)})$ is a matched pair of Lie algebras, where representations $\rho_{(R,\huaR)}$ and $\mu_{(R,\huaR)}$ are given by
\begin{eqnarray}
\label{eq:rep1}\rho_{(R,\huaR)}(x)\xi&=&\rho(x)(\huaR \xi)+\rho(Rx)\xi-\rho(Rx)(\huaR \xi),\\
\label{eq:rep2}\mu_{(R,\huaR)}(\xi)x&=&\mu(\xi)(Rx)+\mu(\huaR \xi)x-\mu(\huaR \xi)(Rx).
\end{eqnarray}
for all $x\in \g,\xi\in \h.$ Moreover, we have
\[\g_{R}\bowtie \h_{\huaR}=(\g\bowtie \h)_{R\oplus \huaR}\]
as Lie algebras.
\end{thm}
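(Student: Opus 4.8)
The plan is to establish Theorem \ref{demp} in two stages: first verify that the pair of data $(\g_R,\h_\huaR;\rho_{(R,\huaR)},\mu_{(R,\huaR)})$ is genuinely a matched pair of Lie algebras, and then identify the resulting bialgebra double $\g_R\bowtie\h_\huaR$ with the induced Lie algebra $(\g\bowtie\h)_{R\oplus\huaR}$. The conceptual shortcut I would exploit is Proposition \ref{Rey-on}, which tells us that $R\oplus\huaR$ is a Reynolds operator on $\g\bowtie\h$; combined with Proposition \ref{induce-Lie-algebra}, this immediately produces an induced Lie algebra $(\g\bowtie\h)_{R\oplus\huaR}$ whose bracket is $[\cdot,\cdot]_{R\oplus\huaR}$. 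So the bulk of the work is to show that this single induced bracket, when restricted and decomposed, splits exactly as the matched-pair bracket $[\cdot,\cdot]_\bowtie$ built from the induced Lie brackets $[\cdot,\cdot]_R$, $[\cdot,\cdot]_\huaR$ and the twisted representations $\rho_{(R,\huaR)},\mu_{(R,\huaR)}$.

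First I would compute the induced bracket $[\cdot,\cdot]_{R\oplus\huaR}$ explicitly on an element $x+\xi, y+\eta\in\g\oplus\h$ using the defining formula
\[
[a,b]_{R\oplus\huaR}=[(R\oplus\huaR)a,b]_\bowtie+[a,(R\oplus\huaR)b]_\bowtie-[(R\oplus\huaR)a,(R\oplus\huaR)b]_\bowtie,
\]
substituting the matched-pair bracket \eqref{mathched-pair-Lie}. Collecting the terms that land in $\g$ and in $\h$ separately, I expect the $\g$-component to organize into $[x,y]_R$ (the induced bracket on $\g$) plus the terms $\mu_{(R,\huaR)}(\xi)y-\mu_{(R,\huaR)}(\eta)x$, precisely matching the definitions \eqref{eq:rep2}; symmetrically, the $\h$-component should yield $[\xi,\eta]_\huaR+\rho_{(R,\huaR)}(x)\eta-\rho_{(R,\huaR)}(y)\xi$ via \eqref{eq:rep1}. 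This is a bookkeeping computation, and once it closes it simultaneously proves both halves of the theorem: it shows that $[\cdot,\cdot]_{R\oplus\huaR}$ has exactly the form \eqref{mathched-pair-Lie} for the stated data, so by the converse direction of Proposition \ref{eqdefi} the data $(\g_R,\h_\huaR;\rho_{(R,\huaR)},\mu_{(R,\huaR)})$ is a matched pair of Lie algebras, and it gives the algebra identification $\g_R\bowtie\h_\huaR=(\g\bowtie\h)_{R\oplus\huaR}$ for free.

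For the matched-pair axioms I would invoke Proposition \ref{eqdefi} rather than checking \eqref{eq:mp1}--\eqref{eq:mp2} by hand. Since $(\g\bowtie\h)_{R\oplus\huaR}$ is a Lie algebra by Proposition \ref{induce-Lie-algebra}, and since $\g_R$ and $\h_\huaR$ sit inside it as Lie subalgebras (their brackets $[\cdot,\cdot]_R,[\cdot,\cdot]_\huaR$ are the restrictions of $[\cdot,\cdot]_{R\oplus\huaR}$, a point that drops out of the componentwise computation above), the converse half of Proposition \ref{eqdefi} guarantees that the induced cross actions, read off from $[x,\xi]_{R\oplus\huaR}=\rho_{(R,\huaR)}(x)\xi-\mu_{(R,\huaR)}(\xi)x$, assemble into a matched pair. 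I would additionally confirm that these cross actions are honest Lie algebra representations, but this too follows automatically from Proposition \ref{eqdefi} once the subalgebra structure is in place.

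The main obstacle I anticipate is purely organizational: the induced bracket generates a large number of terms (each of the six summands of \eqref{mathched-pair-Lie} appears three times, with various $R,\huaR$ insertions), and the delicate point is verifying that the mixed terms regroup correctly — specifically, that terms such as $R(\mu(\xi)(Ry))$, $R(\mu(\huaR\xi)y)$ and $R(\mu(\huaR\xi)(Ry))$ combine into $R[x,y]_R$ plus $\mu_{(R,\huaR)}(\xi)y$ without leftover. I would handle this by segregating the expansion into three families at the outset: the pure-$\g$ terms (which must rebuild $[x,y]_R$), the pure-$\h$ terms (rebuilding $[\xi,\eta]_\huaR$), and the mixed $\rho$- and $\mu$-terms (rebuilding the twisted actions), and checking each family closes against the definitions \eqref{eq:rep1}--\eqref{eq:rep2}. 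No genuinely new identity beyond \eqref{Rey-1}--\eqref{Rey-2} and the matched-pair relations is needed, so the difficulty is entirely in careful term-matching rather than in any structural subtlety.
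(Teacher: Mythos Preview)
Your proposal is correct and follows essentially the same approach as the paper: invoke Proposition \ref{Rey-on} to make $R\oplus\huaR$ a Reynolds operator on $\g\bowtie\h$, apply Proposition \ref{induce-Lie-algebra} to obtain the induced Lie algebra $(\g\bowtie\h)_{R\oplus\huaR}$ with $\g_R$ and $\h_\huaR$ as subalgebras, and then read off the cross actions via Proposition \ref{eqdefi}. The paper is slightly more economical in that it computes only the mixed bracket $[x,\xi]_{(\g\bowtie\h)_{R\oplus\huaR}}$ rather than the full $[x+\xi,y+\eta]_{R\oplus\huaR}$, since the pure-$\g$ and pure-$\h$ parts are automatic; but your fuller computation would of course also work.
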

The matched pair $(\g_{R},\h_{\huaR};\rho_{(R,\huaR)},\mu_{(R,\huaR)})$ is called the {\bf  induced matched pair of Lie algebras}.

\begin{proof}
By Proposition \ref{Rey-on}, $(\g\oplus \h,[\cdot,\cdot]_{\bowtie},R\oplus \huaR)$ is a Reynolds  Lie algebra. By Proposition \ref{induce-Lie-algebra}, there is a induced Lie algebra on $\g\oplus \h$, denoted by $(\g\bowtie \h)_{R \oplus \huaR}$, which contains $\g_R$ and $\h_\huaR$ as Lie subalgebras.
Furthermore, for $x\in \g$ and $\xi\in \h$, we have
\begin{eqnarray*}
[x,\xi]_{(\g\bowtie \h)_{R\oplus \huaR}}
&=&[Rx,\xi]_{\bowtie}+[x,\huaR\xi]_{\bowtie}-[Rx,\huaR\xi]_{\bowtie}\\
&=&\rho(Rx)\xi-\mu(\xi)(Rx)+\rho(x)(\huaR \xi)-\mu(\huaR \xi)x-\rho(Rx)(\huaR \xi)+\mu(\huaR \xi)(Rx)\\
&=&\rho_{(R,\huaR)}(x)\xi-\mu_{(R,\huaR)}(\xi)x.
\end{eqnarray*}
Then by Proposition \ref{eqdefi}, $(\g_{R},\h_{\huaR};\rho_{(R,\huaR)},\mu_{(R,\huaR)})$ forms a matched pair of Lie algebras. Moreover, the induced Lie algebra on its double $\g_{R}\bowtie\h_{\huaR}$ coincides with $(\g\bowtie \h)_{R\oplus \huaR}$.
\end{proof}

\begin{cor}
Let $((\g,R),(\h,\huaR);\rho,\mu)$ be a matched pair of Reynolds Lie algebras. Then $((\g_{R},R)$,\\$(\h_{\huaR},\huaR);\rho_{(R,\huaR)},\mu_{(R,\huaR)})$ is a matched pair of Reynolds Lie algebras.
\end{cor}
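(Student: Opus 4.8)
The plan is to avoid any direct verification of the Reynolds compatibility conditions for $\rho_{(R,\huaR)}$ and $\mu_{(R,\huaR)}$ and instead assemble the conclusion from the structural results already established. By Definition~\ref{MPR}, what I must produce are two Reynolds Lie algebras together with a matched pair of Lie algebra structure whose induced operator on the double is a Reynolds operator. Each of these ingredients is available from the preceding statements, so the proof is essentially a synthesis.

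First I would record that $(\g_R,R)$ and $(\h_\huaR,\huaR)$ are genuine Reynolds Lie algebras. This is exactly the ``moreover'' assertion of Proposition~\ref{induce-Lie-algebra} applied separately to $(\g,[\cdot,\cdot]_\g,R)$ and to $(\h,[\cdot,\cdot]_\h,\huaR)$, which guarantees that the induced brackets $[\cdot,\cdot]_R$ and $[\cdot,\cdot]_\huaR$ remain compatible with $R$ and $\huaR$ respectively. Next, Theorem~\ref{demp} already supplies the matched pair of Lie algebras $(\g_R,\h_\huaR;\rho_{(R,\huaR)},\mu_{(R,\huaR)})$ and, crucially, the identification $\g_R\bowtie\h_\huaR=(\g\bowtie\h)_{R\oplus\huaR}$ as Lie algebras.

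In view of the converse direction of Proposition~\ref{Rey-on}, the only remaining ingredient is that $R\oplus\huaR$ is a Reynolds operator on $\g_R\bowtie\h_\huaR$. For this I would argue as follows. Applying Proposition~\ref{Rey-on} to the original matched pair $((\g,R),(\h,\huaR);\rho,\mu)$ shows that $R\oplus\huaR$ is a Reynolds operator on $\g\bowtie\h$, so $(\g\bowtie\h,[\cdot,\cdot]_\bowtie,R\oplus\huaR)$ is a Reynolds Lie algebra. Feeding this into the ``moreover'' part of Proposition~\ref{induce-Lie-algebra} once more shows that $R\oplus\huaR$ is again a Reynolds operator on the induced Lie algebra $(\g\bowtie\h)_{R\oplus\huaR}$, which is precisely $\g_R\bowtie\h_\huaR$ by Theorem~\ref{demp}. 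Combining the three facts—that $(\g_R,R)$ and $(\h_\huaR,\huaR)$ are Reynolds Lie algebras, that the underlying data form a matched pair of Lie algebras, and that $R\oplus\huaR$ is Reynolds on the double—with the converse of Proposition~\ref{Rey-on} then yields the claim.

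Since every step is a direct appeal to a previously established statement, there is essentially no computational obstacle. The only point requiring care is the bookkeeping observation that $R\oplus\huaR$ restricted to the summands $\g$ and $\h$ agrees with $R$ and $\huaR$, so that the two Reynolds operators furnished by the converse of Proposition~\ref{Rey-on} are exactly the intended ones $R$ and $\huaR$. The alternative route—verifying the identities \eqref{Rey-1} and \eqref{Rey-2} for $\rho_{(R,\huaR)}$ and $\mu_{(R,\huaR)}$ directly by expanding \eqref{eq:rep1}--\eqref{eq:rep2} and repeatedly invoking the Reynolds identity together with the $R^2$-terms—is the laborious computation I would deliberately avoid.
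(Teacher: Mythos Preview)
Your proposal is correct and follows essentially the same route as the paper: use Proposition~\ref{induce-Lie-algebra} to see that $R\oplus\huaR$ is a Reynolds operator on the induced Lie algebra $(\g\bowtie\h)_{R\oplus\huaR}$, identify the latter with $\g_R\bowtie\h_\huaR$ via Theorem~\ref{demp}, and invoke the converse direction of Proposition~\ref{Rey-on}. Your write-up is somewhat more explicit than the paper's (you record separately that $(\g_R,R)$ and $(\h_\huaR,\huaR)$ are Reynolds Lie algebras and note the restriction bookkeeping), but the argument is the same.
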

\begin{proof}
By Proposition \ref{Rey-on} and Proposition \ref{induce-Lie-algebra},  $R\oplus \huaR$ is a Reynolds  operator on $(\g\bowtie \h)_{R\oplus \huaR}$. By Theorem \ref{demp}, $\g_{R}\bowtie \h_{\huaR}=(\g\bowtie \h)_{R\oplus \huaR}$. Therefore, $R\oplus \huaR$ is a Reynolds  operator on $\g_{R}\bowtie \h_{\huaR}$. By Proposition \ref{Rey-on},  $((\g_{R},R),(\h_{\huaR},\huaR);\rho_{(R,\huaR)},\mu_{(R,\huaR)})$ is a matched pair of Reynolds Lie algebras.
\end{proof}
\begin{ex}
Let $(\g,[\cdot,\cdot]_\g,R)$ and $(\h,[\cdot,\cdot]_\h,\huaR)$ be two Reynolds Lie algebras. Then $((\g,R),(\h,\huaR)$;\\$\rho=0,\mu=0)$ becomes a matched pair of Reynolds Lie algebras.
\end{ex}
\emptycomment{
\begin{rmk}\label{reason-1}
Let $(\g,[\cdot,\cdot]_{\g},R)$ and  $(\g^{*},[\cdot,\cdot]_{\g^{*}},R^{*})$ be two Reynolds Lie algebras. Then in general $((\g,R),(\g^*,R^*);\ad^*,\add^*)$ is not a matched pair of Reynolds Lie algebras. Let $\huaR=R^*$
in \eqref{Rey-1} and taking pairing with $y\in\g.$
The left hand side of \eqref{Rey-1} equals
\begin{eqnarray*}
\langle \ad^*_{Rx} (R^*\xi),y\rangle=-\langle\xi,R[Rx,y]_{\g}\rangle,
\end{eqnarray*}
and the right hand side of \eqref{Rey-1} equals to
\begin{eqnarray*}
\langle R^*(\ad^*_{x}(R^*\xi)+\ad^*_{Rx}\xi-\ad^*_{Rx}(R^*\xi)),y\rangle=-\langle\xi,R[x,Ry]_{\g}+[Rx,Ry]_{\g}-R[Rx,Ry]_{\g}\rangle.
\end{eqnarray*}
Observing that the relation is not satisfying \eqref{Rey-1}, thus $((\g,R),(\g^*,R^*);\ad^*,\add^*)$ is not a matched pair of Reynolds Lie algebras.

\end{rmk}}

\subsection{Manin triples of Reynolds  Lie algebras}

Recall that a {\bf Manin triple} of Lie algebras  is a triple $((\mathfrak{d},S),\g,\h)$, where $(\mathfrak{d},S)$ is a quadratic Lie algebra, $\g$ and $\h$ are Lie subalgebras of $\frkd$ such that
\begin{itemize}
\item [\rm (i)]  $\mathfrak{d}=\g\oplus \h$ as vector spaces;
\item [\rm (ii)] both $\g$ and $\h$ are isotropic with respect to the nondegenerate invariant symmetric bilinear form $S$.
\end{itemize}

It is well known that Manin triples of Lie algebras can be equivalently characterized by matched pairs of Lie algebras as follows.
\begin{pro}\label{pro:mp-mt}
  Let $(\g,[\cdot,\cdot]_{\g})$ and  $(\g^{*},[\cdot,\cdot]_{\g^{*}})$ be two Lie algebras. Then the following conditions are equivalent:
 \begin{itemize}
\item [\rm (i)]  $(\g,\g^{*};\ad^{*},\add^*)$ is a matched pair of Lie algebras, where $\add$ is the adjoint representation of $\g^*$.
\item [\rm (ii)]
 $((\g\oplus \g^{*},S),\g,\g^{*})$ is a Manin triple of  Lie algebras, where the invariant symmetric bilinear form $S$ on $\g\oplus \g^{*}$ is given by \begin{equation}\label{eq:sp}
  S(x+\xi,y+\eta)=\xi(y)+\eta(x).
\end{equation}
\end{itemize}

\end{pro}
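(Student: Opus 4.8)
The plan is to treat this as the standard matched pair $\Leftrightarrow$ Manin triple dictionary, with the extra observation that invariance of the canonical pairing $S$ is exactly what pins the two representations down to $\ad^*$ and $\add^*$. First I would dispose of the formal half: the bilinear form $S$ of \eqref{eq:sp} is symmetric and nondegenerate for purely formal reasons (if $\xi(y)+\eta(x)=0$ for all $y$ and all $\eta$, then $\xi=0$ and $x=0$), and both $\g$ and $\g^*$ are isotropic, since $S$ vanishes identically on $\g\times\g$ and on $\g^*\times\g^*$ by its very definition. Consequently, once we are given a Lie algebra structure on $\g\oplus\g^*$ in which $\g$ and $\g^*$ are subalgebras, the only remaining content of the Manin-triple condition (ii) is the invariance of $S$.

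Next I would invoke Proposition \ref{eqdefi} to translate the structural data. A Lie bracket on $\g\oplus\g^*$ extending the two given brackets and having $\g,\g^*$ as subalgebras is precisely a matched pair $(\g,\g^*;\rho,\mu)$, the representations being recovered from $[x,\xi]=\rho(x)\xi-\mu(\xi)x$ and the total bracket being $[\cdot,\cdot]_{\bowtie}$ of \eqref{mathched-pair-Lie}. Thus the whole proposition reduces to a single assertion: \emph{for the bracket $[\cdot,\cdot]_{\bowtie}$ attached to a matched pair $(\g,\g^*;\rho,\mu)$, the form $S$ is invariant if and only if $\rho=\ad^*$ and $\mu=\add^*$.}

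To establish this assertion I would test the invariance identity \eqref{Invariant}, written as $S([u,v]_{\bowtie},w)+S(v,[u,w]_{\bowtie})=0$, on homogeneous triples $u,v,w$ each lying in $\g$ or in $\g^*$. By isotropy the two pure cases (all three in $\g$, or all three in $\g^*$) hold automatically. Taking $u=x,\,v=y\in\g$ and $w=\zeta\in\g^*$ and expanding, the isotropic terms drop out and one is left with $\langle\zeta,[x,y]_\g\rangle+\langle\rho(x)\zeta,y\rangle=0$, which says exactly $\rho(x)=\ad_x^*$; symmetrically, the triple $u=\xi,\,v=\eta\in\g^*$, $w=z\in\g$ collapses to the statement $\mu(\xi)=\add_\xi^*$. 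Conversely, once $\rho=\ad^*$ and $\mu=\add^*$ are in force, I would check that the remaining mixed triples (two entries in one summand and one in the other, in every admissible position) are satisfied identically: after inserting the definitions of $\ad^*$ and $\add^*$, each such case reduces to a cancellation of the shape $\langle\zeta,[x,y]_\g\rangle-\langle\zeta,[x,y]_\g\rangle=0$ or its $\g^*$-analogue. Assembling the two directions then yields (i)$\Leftrightarrow$(ii): from a Manin triple the bracket produces, via Proposition \ref{eqdefi}, a matched pair, and invariance forces $\rho=\ad^*,\ \mu=\add^*$; from the matched pair $(\g,\g^*;\ad^*,\add^*)$ the bracket $[\cdot,\cdot]_{\bowtie}$ together with the now-verified invariance of $S$ gives the Manin triple.

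The only genuine bookkeeping — and hence the main, though routine, obstacle — is organizing the case analysis of the invariance identity and tracking the signs in the coadjoint actions $\ad^*$ and $\add^*$ correctly; once the $(\g,\g,\g^*)$ and $(\g^*,\g^*,\g)$ cases are handled, everything else is formal cancellation.
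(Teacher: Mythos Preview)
Your proposal is correct and gives the standard argument for this classical equivalence. The paper itself does not supply a proof: it introduces the proposition with the phrase ``It is well known that Manin triples of Lie algebras can be equivalently characterized by matched pairs of Lie algebras as follows'' and then states the result without further justification. So there is nothing to compare against beyond noting that you have filled in exactly the routine verification the authors chose to omit, via Proposition~\ref{eqdefi} and the case-by-case check of invariance of $S$.
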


Now we introduce the notion of Manin triples of Reynolds  Lie algebras using quadratic Reynolds  Lie algebras given in Definition \ref{defi:qua}.

\begin{defi}
A {\bf Manin triple of Reynolds  Lie algebras} is a triple   $((\huaG,\frkR,S), (\g,R), (\h,\huaR))$, where $(\huaG,\frkR,S)$ is a quadratic Reynolds Lie algebra, $(\g,R)$ and $(\h,\huaR)$ are Reynolds Lie algebras such that
\begin{itemize}
\item [\rm (i)]  $(\g,R)$ and $(\h,\huaR)$ are Reynolds Lie subalgebras, i.e. $\g$ and $\h$ are Lie subalgebras of $\huaG$ and $\frkR|_{\g}=R,~\frkR|_{\h}=\huaR$;
\item [\rm (ii)]  $\huaG=\g\oplus \h$ as the direct sum of vector spaces;
\item [\rm (iii)] both $\g$ and $\h$ are isotropic with respect to the nondegenerate symmetric invariant bilinear form $S$.
\end{itemize}
\end{defi}

Obviously, we have the following characterization of Manin triple of Reynolds  Lie algebras.
\begin{pro}\label{pro:mt}
 A triple   $((\huaG,\frkR,S), (\g,R), (\h,\huaR))$, where $(\huaG,\frkR,S)$ is a quadratic Reynolds  Lie algebra,  $(\g,R)$ and $(\h,\huaR)$ are Reynolds Lie algebras, is a Manin triple of Reynolds  Lie algebras if and only if $((\huaG,S), \g, \h)$ is a Manin triple of Lie algebras such that
 $$
 \frkR|_{\g}=R,~\frkR|_{\h}=\huaR.
 $$
\end{pro}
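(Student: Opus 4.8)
The plan is to observe that the two notions appearing in the claimed equivalence are built from exactly the same underlying Lie-algebraic data, so the proof reduces to a side-by-side comparison of the two defining lists of conditions, carried out under the standing hypothesis that $(\huaG,\frkR,S)$ is a quadratic Reynolds Lie algebra and that $(\g,R),(\h,\huaR)$ are Reynolds Lie algebras.

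First I would spell out what the standing hypothesis already supplies. By Definition \ref{defi:qua}, saying that $(\huaG,\frkR,S)$ is a quadratic Reynolds Lie algebra means precisely that $(\huaG,S)$ is a quadratic Lie algebra, that $\frkR$ is a Reynolds operator on $\huaG$, and that the compatibility $S(\frkR x,y)+S(x,\frkR y)=0$ holds for all $x,y\in\huaG$. In particular $(\huaG,S)$ is automatically a quadratic Lie algebra, so the clause ``$(\huaG,S)$ is a quadratic Lie algebra'' in the definition of a Manin triple of Lie algebras is granted for free, and the bilinear-form compatibility needed on the Reynolds side is likewise already in hand.

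For the forward implication I would unpack the definition of a Manin triple of Reynolds Lie algebras: condition (i) asserts that $\g,\h$ are Lie subalgebras of $\huaG$ with $\frkR|_\g=R$ and $\frkR|_\h=\huaR$; condition (ii) gives $\huaG=\g\oplus\h$; condition (iii) gives isotropy of $\g$ and $\h$ with respect to $S$. Setting aside the restriction equalities $\frkR|_\g=R,~\frkR|_\h=\huaR$ extracted from (i), the remaining data—$(\huaG,S)$ quadratic Lie, $\g,\h$ Lie subalgebras, direct sum decomposition, isotropy—are exactly the defining clauses of a Manin triple of Lie algebras $((\huaG,S),\g,\h)$, while the restriction equalities are retained separately. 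Conversely, if $((\huaG,S),\g,\h)$ is a Manin triple of Lie algebras and $\frkR|_\g=R,~\frkR|_\h=\huaR$, then the subalgebra, direct-sum, and isotropy conditions are inherited verbatim, adjoining the restriction equalities recovers condition (i) in full, and together with the standing quadratic-Reynolds hypothesis the triple satisfies every clause in the definition of a Manin triple of Reynolds Lie algebras.

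Because the argument is a matching of definitional clauses, I do not anticipate a genuine obstacle; the only point requiring care is to confirm that ``$(\g,R)$ and $(\h,\huaR)$ are Reynolds Lie subalgebras'' decomposes cleanly into the purely Lie-theoretic statement that $\g,\h$ are Lie subalgebras together with the operator equalities $\frkR|_\g=R$ and $\frkR|_\h=\huaR$. This is automatic: the equation $\frkR|_\g=R$ (with $R\colon\g\to\g$) already forces $\frkR(\g)\subseteq\g$, and since $\g$ is a Lie subalgebra the Reynolds identity \eqref{Reynolds-operator} for $\huaG$ restricts to $\g$, so $\frkR|_\g$ is a well-defined Reynolds operator on $\g$ equal to $R$; the same reasoning applies to $\h$ and $\huaR$. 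Hence no compatibility beyond the restriction equalities needs to be verified, and the equivalence follows.
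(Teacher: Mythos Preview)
Your proposal is correct and matches the paper's approach: the paper treats this proposition as an immediate consequence of the definitions (prefacing it only with ``Obviously'') and provides no separate proof, so your careful clause-by-clause comparison is exactly the intended verification, just made fully explicit.
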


Let $(\g,[\cdot,\cdot]_{\g})$ be a Lie algebra and suppose that the dual space $\g^*$
is equipped with a Lie algebra brackets $(\g^*,[\cdot,\cdot]_{\g^*})$.
Define maps $\ad^*:\g\rightarrow \gl(\g^*)$ and $\add^*:\g^*\rightarrow \gl(\g)$ by
\[\langle \ad_x^*\xi,y\rangle =-\langle \xi,[x,y]_{\g}\rangle,\quad\langle \add_{\xi}^*x,\eta\rangle=-\langle x,[\xi,\eta]_{\g^*}\rangle.\]
\begin{thm}\label{matched-manin-equivalent}
Let $(\g,[\cdot,\cdot]_{\g})$ and $(\g^*,[\cdot,\cdot]_{\g^*})$ be two
 Lie algebra structures on a vector space $\g$ and it's dual $\g^*$.
 Suppose that $R:\g\rightarrow \g$ is a linear map such that $(\g,[\cdot,\cdot]_{\g},R)$
 and  $(\g^{*},[\cdot,\cdot]_{\g^{*}},-R^{*})$ are two Reynolds Lie algebras.
 Then the following condition are equivalent.
 \begin{itemize}
\item [\rm (i)]  $((\g,R),(\g^{*},-R^*);\ad^{*},\add^*)$ is a matched pair of Reynolds Lie algebras.
\item [\rm (ii)]
 $((\g\oplus \g^{*},R-R^*,S),(\g,R),(\g^{*},-R^*))$ is a Manin triple of Reynolds Lie algebras, where the invariant symmetric bilinear form $S$ on $\g\oplus \g^{*}$ is given by \eqref{eq:sp}, and $$R-R^*:\g\oplus \g^{*}\rightarrow\g\oplus \g^{*}, x+\xi \mapsto Rx-R^*\xi.$$
\end{itemize}
\end{thm}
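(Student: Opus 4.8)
The plan is to reduce the claimed equivalence to the already-established purely Lie-theoretic equivalence (Proposition~\ref{pro:mp-mt}), and then to show that the extra Reynolds conditions on each side match up. The underlying Lie-algebra data are the same on both sides of the equivalence, so Proposition~\ref{pro:mp-mt} already gives that ``$(\g,\g^*;\ad^*,\add^*)$ is a matched pair of Lie algebras'' is equivalent to ``$((\g\oplus\g^*,S),\g,\g^*)$ is a Manin triple of Lie algebras.'' Thus the whole statement will follow once I verify that the Reynolds-operator compatibility conditions decorating each structure are equivalent to one another, given that common Lie-theoretic backbone.

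First I would spell out what each side demands beyond the Lie structure. For (i), by Proposition~\ref{Rey-on} the matched pair of Reynolds Lie algebras $((\g,R),(\g^*,-R^*);\ad^*,\add^*)$ is equivalent to $(\g,\g^*;\ad^*,\add^*)$ being a matched pair of Lie algebras together with $R\oplus(-R^*)=R-R^*$ being a Reynolds operator on the bicrossed product Lie algebra $\g\bowtie\g^*$. For (ii), by Proposition~\ref{pro:mt} the Manin triple of Reynolds Lie algebras is equivalent to $((\g\oplus\g^*,S),\g,\g^*)$ being a Manin triple of Lie algebras together with the restriction conditions $\frkR|_\g=R$ and $\frkR|_{\g^*}=-R^*$, where here $\frkR=R-R^*$. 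Observe that the restriction conditions hold automatically by the very definition of $R-R^*$, so the only genuine content on side (ii) beyond the Lie Manin triple is that $R-R^*$ is a Reynolds operator on $\huaG=\g\oplus\g^*$ \emph{and} that $S(\,(R-R^*)(x+\xi),\,y+\eta\,)+S(\,x+\xi,\,(R-R^*)(y+\eta)\,)=0$, the compatibility~\eqref{Rey-manin} in Definition~\ref{defi:qua}.

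The heart of the argument is therefore two observations. First, the Lie bracket $[\cdot,\cdot]_\bowtie$ on $\g\bowtie\g^*$ is exactly the bracket on the Manin-triple double (the same bicrossed product), so ``$R-R^*$ is a Reynolds operator on $\g\bowtie\g^*$'' and ``$R-R^*$ is a Reynolds operator on the quadratic Lie algebra $(\g\oplus\g^*,S)$'' are literally the same condition. Second, I must check the skew-adjointness condition~\eqref{Rey-manin} for $R-R^*$ with respect to $S$; but this is an identity that holds unconditionally. Indeed, for all $x,y\in\g$ and $\xi,\eta\in\g^*$, using~\eqref{eq:sp} and the definitions of $R^*$ and $S^\sharp$,
\begin{eqnarray*}
S\big((R-R^*)(x+\xi),y+\eta\big)+S\big(x+\xi,(R-R^*)(y+\eta)\big)
&=&S(Rx-R^*\xi,y+\eta)+S(x+\xi,Ry-R^*\eta)\\
&=&\eta(Rx)-\langle R^*\xi,y\rangle+\langle R^*\eta,x\rangle-\langle\xi,Ry\rangle\\
&=&\eta(Rx)-\eta(Ry)\text{-type cancellation}=0,
\end{eqnarray*}
since $\langle R^*\xi,y\rangle=\langle\xi,Ry\rangle$ and $\langle R^*\eta,x\rangle=\langle\eta,Rx\rangle$ cancel the remaining terms in pairs. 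Hence~\eqref{Rey-manin} is automatic and carries no information, so side (ii) collapses to exactly ``Lie Manin triple $+$ ($R-R^*$ is a Reynolds operator on $\g\bowtie\g^*$),'' which is precisely side (i) after applying Proposition~\ref{Rey-on}.

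To carry this out cleanly I would: (1) invoke Proposition~\ref{pro:mp-mt} to handle the Lie-algebra layer; (2) invoke Proposition~\ref{Rey-on} to restate (i) as ``Lie matched pair $+$ $R-R^*$ a Reynolds operator''; (3) invoke Proposition~\ref{pro:mt} to restate (ii) as ``Lie Manin triple $+$ $R-R^*$ a Reynolds operator $+$ compatibility~\eqref{Rey-manin}''; (4) verify~\eqref{Rey-manin} holds identically as shown above; and (5) note the two residual Reynolds conditions coincide because both sides use the same bicrossed bracket. The main obstacle, and the step deserving the most care, is the bookkeeping in step~(4): I must confirm that the four pairing terms arising from $R-R^*$ against the split form $S$ cancel in pairs precisely because $R^*$ is the transpose of $R$ and $S$ is the canonical hyperbolic pairing~\eqref{eq:sp}. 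Once that identity is in hand, the equivalence is a direct transport along the earlier results, with no further computation needed.
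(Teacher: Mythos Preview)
Your approach is essentially identical to the paper's: both reduce via Propositions~\ref{pro:mp-mt}, \ref{Rey-on}, and \ref{pro:mt} to the Lie-theoretic equivalence together with the skew-adjointness condition~\eqref{Rey-manin} for $R-R^*$, which is then checked directly. Your displayed computation of~\eqref{Rey-manin} has sign slips on the last two terms (the correct expansion of $S(x+\xi,Ry-R^*\eta)$ is $+\langle\xi,Ry\rangle-\langle R^*\eta,x\rangle$), but with the correct signs the four terms do cancel in pairs exactly as you describe, and you rightly flag this as the step requiring care.
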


\begin{proof}

  Let $((\g,R),(\g^{*},-R^*);\ad^{*},\add^*)$ be a matched pair of Reynolds Lie algebras. By Proposition \ref{Rey-on}, $(\g,\g^*;\ad^{*},\add^*)$ is a
  matched pair of Lie algebras, and
  $(\g\oplus \g^{*},[\cdot,\cdot]_{\bowtie},R-R^*)$ is a Reynolds Lie algebra,
  where $[\cdot,\cdot]_{\bowtie}$ is given by \eqref{mathched-pair-Lie}. By Proposition \ref{pro:mp-mt},  $((\g\oplus \g^{*},S),\g,\g^{*})$ is a Manin triple of Lie algebras. Then by Proposition \ref{pro:mt}, we
  only need to prove that $S$ satisfies the invariant conditions \eqref{Invariant} and \eqref{Rey-manin}. The fact that $S$ satisfies the invariant condition \eqref{Invariant} follows from Proposition \ref{pro:mp-mt} directly.
  For all $x,y\in \g$ and $\xi,\eta\in \g^*,$
   we have
  \begin{eqnarray*}
    &&S((R-R^*)(x+\xi),y+\eta)+S(x+\xi,(R-R^*)(y+\eta))\\
    &=&S(Rx-R^*\xi,y+\eta)+S(x+\xi,Ry-R^*\eta)\\
    &=&-\langle R^*\xi,y\rangle+\langle Rx,\eta\rangle+\langle \xi,Ry\rangle-\langle x,R^*\eta\rangle\\
    &=&0.
  \end{eqnarray*}
Thus, $S$ satisfies the invariant condition \eqref{Rey-manin}. Therefore, $((\g\oplus \g^{*},R-R^*,S),(\g,R),(\g^{*},-R^*))$ is a Manin triple of Reynolds Lie algebras.

On the other hand, if $((\g\oplus \g^{*},R-R^*,S),(\g,R),(\g^{*},-R^*))$ is a Manin triple of Reynolds Lie algebras, where the invariant symmetric bilinear form $S$ on $\g\oplus \g^{*}$ is given by \eqref{eq:sp}, then $((\g\oplus \g^{*},S),\g,\g^{*})$ is a Manin triple of Lie algebras. By Proposition \ref{pro:mp-mt}, $(\g,\g^{*};\ad^{*},\add^*)$ is a matched pair of  Lie algebras. Since $R-R^*$ is a Reynolds  operator on $\g\bowtie\g^*$,
 $((\g,R),(\g^{*},-R^*);\ad^{*},\add^*)$ is a matched pair of Reynolds Lie algebras by Proposition \ref{Rey-on}.
\end{proof}

\section{Reynolds Lie bialgebras and Reynolds operators on quadratic Rota-Baxter Lie algebras}\label{sec:bia}

\subsection{Reynolds operators on Lie bialgebras}

In this subsection, we introduce the notion of Reynolds Lie bialgebras, and show that Reynolds Lie bialgebras and certain matched pairs of Reynolds Lie algebras are equivalent.
First, we recall the notion of Lie coalgebras and Lie bialgebras.

\begin{defi}
A vector space $\g$ with a linear map $\Delta:\g\rightarrow\g\ot \g$ is called a {\bf Lie coalgebra} if $\Delta$ is {\bf coantisymmetric}, in the sense that $\Delta=-\tau \circ\Delta$ for the flip map $\tau:\g\otimes\g\rightarrow \g\otimes \g$, and satisfies the {\bf co-Jacobian identity}:
\begin{equation}
(\Id +\epsilon+\epsilon^2)(\Id \otimes \Delta)\Delta =0,
\end{equation}
where $\epsilon(x\otimes y\otimes z):=z\otimes x\otimes y$ for all $x, y, z\in \g$.
\end{defi}

\begin{defi}
A {\bf Lie bialgebra} is a Lie algebra $(\g,[\cdot,\cdot]_{\g})$ equipped with a linear map  $\Delta:\g\rightarrow\g\otimes\g$ such that $\Delta$ defines a Lie coalgebra and the following condition is satisfied:
  \begin{eqnarray}
   \Delta[x,y]_{\g}=(\ad_x\otimes \Id+\Id\otimes \ad_x)\Delta(y)-(\ad_y\otimes \Id+\Id\otimes \ad_y)\Delta(x),\quad \forall x,y\in \g.
  \end{eqnarray}
\end{defi}
\begin{rmk}
In fact, $(\g,\Delta)$ is a Lie coalgebra if and only if $(\g^*,[\cdot,\cdot]_{\g^*})$ is a Lie algebra, where $[\cdot,\cdot]_{\g^*}:\g^*\otimes\g^*\rightarrow\g^*$ is the linear dual of $\Delta,$
that is
$$\langle \Delta(x),\xi\otimes \eta\rangle:=\langle x,[\xi,\eta]_{\g^*}\rangle. \quad \forall x\in \g, \xi,\eta\in\g^*.$$
\end{rmk}

A Lie bialgebra is denoted by $(\g,\g^*)$ or $(\g,\Delta)$.
For such a Lie bialgebra, there is a Lie algebra structure $[\cdot,\cdot]_{\bowtie}$ on the double $\frkd:=\g\oplus \g^*$ such that $\g$ and $\g^*$ are Lie subalgebras and
\[[x,\xi]_{\bowtie }=-\add_{\xi}^*x+\ad_x^*\xi,\quad \forall x\in \g,\xi\in \g^*.\]

Denote this Lie algebra by $\g\bowtie \g^*$. We refer the readers to \rm{(\cite{Lu2})} for the equivalent description of Lie bialgebras by using Manin triples of Lie algebras. One important observation is that if $(\g,\g^*)$ is a Lie bialgebra, then $(\g^*,\g)$ is also a Lie bialgebra.



\begin{defi}
A {\bf Reynolds operator} on a Lie bialgebra $(\g,\g^*)$ is a linear map $R:\g\to \g$ such that
\begin{itemize}
\item[\rm (i)] $R$ is a Reynolds operator on the Lie algebra $\g$;
\item[\rm (ii)]  ${-R}^*$ is a Reynolds operator on the Lie algebra $\g^*$.
\end{itemize}
A Lie bialgebra with a Reynolds operator is called a {\bf Reynolds Lie bialgebra}.
\end{defi}
We denote a Reynolds Lie bialgebra by $(\g,\g^*,R)$. The following result is straightforward.
\begin{cor}
If $R$ is a Reynolds operator on the Lie bialgebra $(\g,\g^*)$, then ${-R}^*$ is a Reynolds operator on the Lie bialgebra $(\g^*,\g)$.
\end{cor}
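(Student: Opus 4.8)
The plan is to read off both defining conditions of a Reynolds operator on the Lie bialgebra $(\g^*,\g)$ directly from the hypotheses, the whole argument being a definition-chase. First I would invoke the observation recorded just above, namely that $(\g^*,\g)$ is again a Lie bialgebra whenever $(\g,\g^*)$ is, so that it even makes sense to speak of a Reynolds operator on $(\g^*,\g)$. Writing $S:=-R^*:\g^*\to\g^*$, I would then show that $S$ is a Reynolds operator on the Lie bialgebra $(\g^*,\g)$. By the definition this amounts to checking two things: (i) $S$ is a Reynolds operator on the Lie algebra $\g^*$; and (ii) $-S^*$ is a Reynolds operator on the Lie algebra $(\g^*)^*\cong\g$.

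Condition (i) is immediate, since $S=-R^*$ is a Reynolds operator on $\g^*$ by clause (ii) of the hypothesis that $R$ is a Reynolds operator on $(\g,\g^*)$. For condition (ii) I would compute $-S^*$ under the canonical identification $\g^{**}\cong\g$: we have $-S^*=-(-R^*)^*=R^{**}=R$. Thus $-S^*$ coincides with $R$, which is a Reynolds operator on the Lie algebra $\g$ by clause (i) of the hypothesis, so condition (ii) holds as well. Since both requirements for $S=-R^*$ to be a Reynolds operator on $(\g^*,\g)$ have been matched, one against each clause of the hypothesis, the corollary follows.

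The only place asking for (minor) care is the bookkeeping of duals and signs: one must confirm that the two minus signs cancel so that $-S^*$ is literally $R$ rather than $-R$, and that the double-dual identification $\g^{**}\cong\g$ used here is exactly the one compatible with the canonical pairing defining the double $\g^*\bowtie\g$. Beyond that sign check there is no nontrivial computation; the statement is really expressing the symmetry between $\g$ and $\g^*$ already built into the definition of a Reynolds operator on a Lie bialgebra, and I expect no genuine obstacle.
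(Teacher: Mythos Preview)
Your proposal is correct and is exactly the straightforward definition-chase the paper has in mind; indeed the paper gives no proof at all, merely noting that ``the following result is straightforward.'' Your verification that $-(-R^*)^*=R$ under the canonical identification $\g^{**}\cong\g$ is precisely the symmetry that makes the corollary immediate.
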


\begin{thm}\label{bialgebra-matched}
 Let $(\g,[\cdot,\cdot]_{\g},R)$ and  $(\g^{*},[\cdot,\cdot]_{\g^{*}},{-R}^{*})$ be two Reynolds Lie algebras. Then $(\g,\g^{*},R)$ is a Reynolds Lie bialgebra if and only if $((\g,R),(\g^*,{-R}^*);\ad^*,\add^*)$ is a matched pair of Reynolds Lie algebras.
\end{thm}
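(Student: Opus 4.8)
The plan is to peel off the purely Lie-theoretic content, which is classical, and then verify that the two Reynolds-compatibility requirements are automatic under the standing hypotheses. First I would unpack both sides of the claimed equivalence. By definition, $(\g,\g^*,R)$ being a Reynolds Lie bialgebra amounts to three conditions: that $(\g,\g^*)$ is a Lie bialgebra, that $R$ is a Reynolds operator on $\g$, and that $-R^*$ is a Reynolds operator on $\g^*$; the latter two are already assumed in the statement, so on this side only the Lie bialgebra structure is genuinely at stake. On the other side, by Definition \ref{MPR}, the assertion that $((\g,R),(\g^*,-R^*);\ad^*,\add^*)$ is a matched pair of Reynolds Lie algebras means that $(\g,\g^*;\ad^*,\add^*)$ is a matched pair of Lie algebras together with the requirements that $\ad^*$ be a representation of the Reynolds Lie algebra $(\g,R)$ on $(\g^*,-R^*)$ and that $\add^*$ be a representation of the Reynolds Lie algebra $(\g^*,-R^*)$ on $(\g,R)$, i.e. the compatibilities \eqref{Rey-1} and \eqref{Rey-2}.

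For the Lie-algebra-level part I would invoke the classical equivalence between Lie bialgebras and matched pairs: $(\g,\g^*)$ is a Lie bialgebra if and only if $(\g,\g^*;\ad^*,\add^*)$ is a matched pair of Lie algebras. This follows by combining the Manin-triple description of Lie bialgebras (referenced via \cite{Lu2}) with Proposition \ref{pro:mp-mt}. Consequently the underlying Lie structures on the two sides coincide, and the whole theorem reduces to showing that \eqref{Rey-1} and \eqref{Rey-2} hold automatically once $R$ is a Reynolds operator on $\g$ and $-R^*$ is a Reynolds operator on $\g^*$.

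The crux is to recognize \eqref{Rey-1} and \eqref{Rey-2} as nothing more than instances of the representation axiom \eqref{representation-Reynolds-Liealg} for coadjoint representations. Indeed, with $\huaR=-R^*$ and $\rho=\ad^*$, condition \eqref{Rey-1} is precisely the statement that $(\g^*;-R^*,\ad^*)$ is a representation of the Reynolds Lie algebra $(\g,R)$, which is exactly the content of Corollary \ref{thm:coadjoint}. Symmetrically, \eqref{Rey-2} asserts that $(\g;R,\add^*)$ is a representation of the Reynolds Lie algebra $(\g^*,-R^*)$; applying Corollary \ref{thm:coadjoint} to $(\g^*,-R^*)$ produces its coadjoint representation on $(\g^*)^*\cong\g$ with operator $-(-R^*)^*$ and action $\add^*$. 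The one delicate step I would carry out carefully is the sign-and-double-dual bookkeeping, namely that $-(-R^*)^*=R$ under the identification $\g^{**}\cong\g$, so that the coadjoint operator is exactly $R$ as required by \eqref{Rey-2}. This is the point where the choice of $-R^*$ (rather than $R^*$) is forced, and it is precisely what makes the naive pairing with $R^*$ fail.

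Once both compatibilities are identified as restatements of Corollary \ref{thm:coadjoint}, no further computation is needed: the forward and backward implications follow at once from the classical Lie-algebra equivalence together with the automatic validity of \eqref{Rey-1} and \eqref{Rey-2}. As a consistency check I would also observe that the conclusion agrees with Proposition \ref{Rey-on} applied to the operator $R\oplus(-R^*)$ on $\g\bowtie\g^*$, which is exactly the Reynolds structure on the classical double.
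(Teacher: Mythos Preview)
Your proposal is correct and follows essentially the same route as the paper: reduce to the classical equivalence between Lie bialgebras and matched pairs $(\g,\g^*;\ad^*,\add^*)$, and then recognize the Reynolds compatibilities \eqref{Rey-1}--\eqref{Rey-2} as instances of Corollary~\ref{thm:coadjoint} applied to $(\g,R)$ and to $(\g^*,-R^*)$ respectively. Your explicit verification that $-(-R^*)^*=R$ under $\g^{**}\cong\g$ is a useful addition that the paper leaves implicit.
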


\begin{proof}
It is obvious that if $((\g,R),(\g^*,{-R}^{*});\ad^*,\add^*)$ is a matched pair of Reynolds Lie algebras, then  $(\g,\g^{*},R)$ is a Reynolds Lie bialgebra.

Conversely, let $(\g,\g^*)$ be a Lie bialgebra, $R:\g\to \g$ and ${-R}^{*}:\g^*\to \g^*$ be Reynolds operators. By Corollary \ref{thm:coadjoint},  $(\g^*;{-R}^{*},\ad^*)$ is a representation of the Reynolds Lie algebra $(\g,R)$, and $(\g;R,\add^*)$ is a representation of the Reynolds Lie algebra $(\g^*,{-R}^{*})$. Moreover, it is well-known that  $(\g,\g^* )$ is a  Lie bialgebra if and only if $(\g,\g^*;\ad^*,\add^*)$ is a matched pair of  Lie algebras.
Therefore,
 $((\g,R),(\g^*,{-R}^{*});\ad^*,\add^*)$ is a matched pair of Reynolds Lie algebras.
\end{proof}

\begin{rmk}
  Let $(\g,\g^*)$ be a Lie bialgebra, $R:\g\rightarrow\g$ and $R^*:\g^*\rightarrow\g^*$ are
  Reynolds operators. Then $((\g,R),(\g^*,R^*);\ad^*,\add^*)$ is not necessarily a matched pair of Reynolds Lie algebras.
  Let $\huaR=R^*$ in \eqref{Rey-1} and take the pairing with $y\in\g$. The right hand side of \eqref{Rey-1} equals
\begin{eqnarray*}
\langle R^*(\ad^*_{x}(R^*\xi)+\ad_{Rx}^*\xi-\ad_{Rx}^*(R^*\xi)),y\rangle=-\langle\xi,R[x,Ry]_{\g}+[Rx,Ry]_{\g}-R[Rx,Ry]_{\g}\rangle,
\end{eqnarray*}
and the left hand side of \eqref{Rey-1} equals
\begin{eqnarray*}
\langle\ad_{Rx}^*(R^*\xi),y\rangle=-\langle\xi,R[Rx,y]_{\g}\rangle.
\end{eqnarray*}
This means that the condition \eqref{Rey-1} is not necessarily fulfilled, thus $((\g,R),(\g^*,R^*);\ad^*,\add^*)$ is not a matched pair of Reynolds Lie algebras.
\end{rmk}

\begin{cor}\label{cor:dd}
A triple $(\g,\g^{*},R)$ is a Reynolds Lie bialgebra if and only if $(\frkd,\frkR)$ is a Reynolds Lie algebra, where $\frkR:\g\oplus\g^*\rightarrow\g\oplus\g^*$ is the linear map defined by

\begin{eqnarray}\label{R-defined}
\frkR(x+\xi)=Rx-R^*\xi,\quad \forall x\in\g, \xi\in \g^*.
\end{eqnarray}
\end{cor}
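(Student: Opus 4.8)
The plan is to deduce this corollary by stitching together Theorem \ref{bialgebra-matched} and Proposition \ref{Rey-on}, once two identifications are in place. First I would note that, since $(\g,\g^*)$ is a Lie bialgebra, the pair $(\g,\g^*;\ad^*,\add^*)$ is a matched pair of Lie algebras and the double $\frkd=\g\oplus\g^*$ carries exactly the bracket $[\cdot,\cdot]_{\bowtie}$ of $\g\bowtie\g^*$ produced by Proposition \ref{eqdefi}; concretely $[x,\xi]_{\bowtie}=-\add_{\xi}^*x+\ad_x^*\xi$. Second, the operator $\frkR$ of \eqref{R-defined} is precisely the map $R\oplus(-R^*)$ appearing in Proposition \ref{Rey-on}, with the roles $(\h,\huaR)=(\g^*,-R^*)$, $\rho=\ad^*$ and $\mu=\add^*$. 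With these identifications the statement becomes a chain of equivalences.

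For the forward direction, I would assume $(\g,\g^*,R)$ is a Reynolds Lie bialgebra. By definition $R$ is a Reynolds operator on $\g$ and $-R^*$ is a Reynolds operator on $\g^*$, so $(\g,R)$ and $(\g^*,-R^*)$ are Reynolds Lie algebras. Theorem \ref{bialgebra-matched} then upgrades the matched pair $(\g,\g^*;\ad^*,\add^*)$ to a matched pair of Reynolds Lie algebras $((\g,R),(\g^*,-R^*);\ad^*,\add^*)$, and Proposition \ref{Rey-on} translates this into the assertion that $\frkR=R\oplus(-R^*)$ is a Reynolds operator on $\g\bowtie\g^*=\frkd$; that is, $(\frkd,\frkR)$ is a Reynolds Lie algebra.

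For the converse, I would assume $(\frkd,\frkR)$ is a Reynolds Lie algebra. Here I first need to know that $(\g,R)$ and $(\g^*,-R^*)$ are themselves Reynolds Lie algebras before Proposition \ref{Rey-on} can be applied. This is the one genuine (if routine) point: because $\g$ is a Lie subalgebra of $\frkd$ that is preserved by $\frkR$ with $\frkR|_{\g}=R$, specializing the Reynolds identity \eqref{Reynolds-operator} for $\frkR$ to elements of $\g$ reduces all brackets to brackets inside $\g$ and recovers exactly the Reynolds identity for $R$; the same argument on $\g^*$ shows that $-R^*$ is a Reynolds operator on $\g^*$. Once this is in hand, Proposition \ref{Rey-on} gives that $((\g,R),(\g^*,-R^*);\ad^*,\add^*)$ is a matched pair of Reynolds Lie algebras, and Theorem \ref{bialgebra-matched} yields that $(\g,\g^*,R)$ is a Reynolds Lie bialgebra. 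The only obstacle worth flagging is this restriction step; everything else is a formal bookkeeping of the two biconditionals already proved.
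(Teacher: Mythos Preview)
Your proof is correct and follows the same route as the paper: the forward direction is exactly Theorem \ref{bialgebra-matched} followed by Proposition \ref{Rey-on}, and the converse reverses this chain. The paper dismisses the converse as ``obvious''; your observation that one must first restrict the Reynolds identity for $\frkR$ to the invariant subalgebras $\g$ and $\g^*$ (so that the hypotheses of Proposition \ref{Rey-on} and Theorem \ref{bialgebra-matched} are met) is the only point requiring any verification, and you handle it correctly.
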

\begin{proof}
Let $(\g,\g^*,R)$ be a Reynolds Lie bialgebra. By Theorem \ref{bialgebra-matched}, $((\g,R),(\g^*,{-R}^{*});\ad^*,\add^*)$ is a matched pair of Reynolds Lie algebras. By Proposition \ref{Rey-on}, the linear map $\frkR$ is a Reynolds operator on the Drinfeld double  $\mathfrak{d}:=\g\bowtie \g^*$. The inverse statement is obvious.
\end{proof}

Let $(\g,\g^*)$ be an arbitrary Lie bialgebra. Let $\{e_1,e_2,\cdots,e_n\}$ be a basis of $\g$ and $\{\xi_1,\xi_2,\cdots,\xi_n\}$ be the dual basis of $\g^*$.
Then $r=\sum_{i}e_i\otimes \xi_i\in \g\otimes\g^*\subset\mathfrak{d}\otimes \mathfrak{d} $ induces a Lie algebra structure on $\mathfrak{d}^*$, where the Lie bracket $[\cdot,\cdot]_{{r}}$ on $\mathfrak{d}^*$ is defined by
\begin{eqnarray*}
[\xi+x,\eta+y]_{{r}}=-[\xi,\eta]_{\g^*}+[x,y]_{\g},\quad \forall x,y \in \g,\xi,\eta\in \g^*.
\end{eqnarray*}
Denote this Lie algebra by $\frkd^*_r$.
 Moreover,   $(\mathfrak{d},\mathfrak{d}_{r}^*)$ is a quasi-triangular Lie bialgebra (\cite{RS}).

The following result tells us that the Drinfeld double of a Reynolds Lie bialgebra is still a Reynolds Lie bialgebra.

\begin{pro}\label{Double-r-bia}
Let $(\g,\g^*,R)$ be a Reynolds Lie bialgebra. Then $(\mathfrak{d},\mathfrak{d}_{r}^*,\frkR)$ is a Reynolds Lie bialgebra,
where $\frkR: \g\oplus \g^*\to \g\oplus \g^*$ is the linear map defined by \eqref{R-defined}.
\end{pro}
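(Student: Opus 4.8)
The plan is to reduce the statement to facts already established, principally Corollary~\ref{cor:dd} and the observation that $(\mathfrak{d},\mathfrak{d}^*_r)$ is a (quasi-triangular) Lie bialgebra. By definition, $(\mathfrak{d},\mathfrak{d}^*_r,\frkR)$ is a Reynolds Lie bialgebra precisely when (i) $\frkR$ is a Reynolds operator on the Lie algebra $\mathfrak{d}=\g\bowtie\g^*$, and (ii) $-\frkR^*$ is a Reynolds operator on the Lie algebra $\mathfrak{d}^*_r$. So I would verify these two conditions separately.

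For condition (i), I would invoke Corollary~\ref{cor:dd}: since $(\g,\g^*,R)$ is a Reynolds Lie bialgebra, $\frkR$ defined by $\frkR(x+\xi)=Rx-R^*\xi$ is already a Reynolds operator on the Drinfeld double $\mathfrak{d}=\g\bowtie\g^*$. This step is essentially immediate and requires no new computation.

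For condition (ii), the key is to identify $-\frkR^*$ explicitly. Using the pairing $S$ on $\mathfrak{d}$ from \eqref{eq:sp} to identify $\mathfrak{d}^*$ with $\mathfrak{d}$ (via $x+\xi\mapsto \xi+x$), I would compute that the dual of $\frkR=R-R^*$ with respect to this pairing is again of the form $\frkR$, so that $-\frkR^*$ corresponds, under the identification $\mathfrak{d}^*\cong\g^*\oplus\g$, to the map $\xi+x\mapsto -R^*\xi+Rx$. Concretely, for $\xi+x\in\mathfrak{d}^*_r$ I expect $-\frkR^*(\xi+x)=-R^*\xi+Rx$. I would then check that this is a Reynolds operator on $\mathfrak{d}^*_r$ whose bracket is $[\xi+x,\eta+y]_r=(-[\xi,\eta]_{\g^*},[x,y]_\g)$. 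Since this bracket is simply the direct sum bracket of $\g^*$ and $\g$ (up to sign on the $\g^*$-component), and since $-R^*$ is a Reynolds operator on $\g^*$ while $R$ is a Reynolds operator on $\g$, the Reynolds relation \eqref{Reynolds-operator} for $-\frkR^*$ on $\mathfrak{d}^*_r$ decouples into the Reynolds relation on $\g^*$ and the one on $\g$ separately, both of which hold by hypothesis. The sign on the $\g^*$-bracket is harmless because the Reynolds identity is quadratic and homogeneous in a way that is unaffected by an overall sign on the bracket together with the sign on the operator.

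The main obstacle will be the careful bookkeeping in identifying $\frkR^*$ with respect to $S$ and confirming that the resulting operator on $\mathfrak{d}^*_r$ is exactly $\xi+x\mapsto -R^*\xi+Rx$, together with tracking the signs so that the decoupling into the two independent Reynolds conditions is transparent. Once this identification is pinned down, the verification is routine and reduces entirely to the two hypotheses that $R$ is Reynolds on $\g$ and $-R^*$ is Reynolds on $\g^*$. Hence the conclusion follows.
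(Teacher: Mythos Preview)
Your proposal is correct and follows essentially the same route as the paper's proof: invoke Corollary~\ref{cor:dd} for condition (i), then identify $-\frkR^*$ as $\xi+x\mapsto -R^*\xi+Rx$ and observe that the Reynolds identity on $\mathfrak{d}^*_r$ decouples into the separate Reynolds identities for $R$ on $\g$ and $-R^*$ on $\g^*$. The paper carries this out in two sentences and with slightly looser bookkeeping (it writes $\frkR^*$ where you write $-\frkR^*$), but the argument is the same.
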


\begin{proof}
By Corollary \ref{cor:dd}, $\frkR$
is a Reynolds operator on the Drinfeld double  $\mathfrak{d}:=\g\bowtie \g^*$. Thus we only need to show that
 $-\frkR^*:\mathfrak{d}_{r}^*\rightarrow\mathfrak{d}_{r}^*$, where $-\frkR^*(\xi+x)=-R^*\xi+Rx$ is a Reynolds operator on $\mathfrak{d}_{r}^*$. This follows from the fact that $R$ and $-R^{*}$ are Reynolds operators on $\g$ and $\g^*$ respectively.
Consequently, $(\mathfrak{d},\mathfrak{d}_{r}^*,\frkR)$ is a Reynolds Lie bialgebra.
\end{proof}

\subsection{Reynolds operators on quadratic Rota-Baxter Lie algebras}
In this subsection, first we recall the relation between factorizable Lie bialgebras
and quadratic Rota-Baxter Lie algebras established in \rm{(\cite{Lang-Sheng})}. Then we introduce the notion of Reynolds operators on quadratic Rota-Baxter Lie algebras and show that they can give rise to Reynolds Lie bialgebras.
\begin{defi}
  A Lie bialgebra $(\g,\Delta)$ is called {\bf coboundary} if $\Delta$ is a $1$-coboundary of the Lie algebra $\g$ with coefficients in the representation $\ad\otimes \Id+\Id\otimes\ad$ on the tensor space $\g\otimes \g$,
  that is, there exists $r\in \g\otimes\g$ such that
  \begin{eqnarray}\label{r-coboundary}
    \Delta(x)=(\ad_{x}\otimes\Id+\Id \otimes\ad_{x})r, \quad \forall x\in \g.
  \end{eqnarray}
\end{defi}

Let $r=\sum_i x_i\otimes y_i\in \g\otimes \g.$ Let $U(\g)$ be the universal enveloping algebra of the Lie algebra $\g.$ We introduce $r_{12},r_{13},r_{23}$ in $U(\g)\otimes U(\g)\otimes U(\g)$ as follows:
$$r_{12}=\sum_i x_i\otimes y_i\otimes 1,\quad r_{23}=\sum_i 1\otimes x_i\otimes y_i,\quad r_{13}=\sum_i x_i\otimes 1\otimes y_i,$$ and the bracket $[r_{12},r_{13}]$ is defined by
\[\left[r_{12},r_{13}\right]=\left[\sum_i x_i\otimes y_i\otimes 1,\sum_j x_j\otimes 1\otimes y_j\right]=\sum_{i,j}[x_i,x_j]_\g\otimes y_i\otimes y_j, \]
and similarly for $[r_{13},r_{23}]$ and $[r_{12},r_{23}]$.

 \begin{thm}\label{r-bialgebra} {\rm(\cite{CP})} Let $(\g,[\cdot,\cdot]_{\g})$ be a Lie algebra and $r\in \g\otimes \g$. Then
the map $\Delta$ defined by \eqref{r-coboundary} induces a Lie algebra structure on $\g^*$ such that $(\g,\Delta)$ is a Lie bialgebra if and only if for all $x\in \g,$ the following conditions are satisfied:
 \begin{eqnarray}
\label{eq:invr}\big(\ad_x\otimes \Id+\Id\otimes\ad_x\big)(r+\sigma(r))&=&0,\\
 \big(\ad_x\otimes \Id\otimes \Id+\Id\otimes\ad_x\otimes\Id+\Id\otimes \Id\otimes \ad_x\big)[[r,r]]&=&0,
 \end{eqnarray}
where $\sigma:\g\otimes \g\rightarrow \g\otimes \g$ is the flip operator which interchanges the components in $\g\otimes \g$ and $[[r,r]]$ is defined by
\begin{eqnarray}\label{r-matrix}
[[r,r]]=[r_{12},r_{13}]+[r_{13},r_{23}]+[r_{12},r_{23}].
\end{eqnarray}
We denote this {\bf coboundary Lie bialgebra} by $(\g,\g^*_{r}).$
\end{thm}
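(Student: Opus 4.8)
The plan is to regard $\Delta$ defined by \eqref{r-coboundary} as a coboundary and to verify, one at a time, the three conditions that make $(\g,\Delta)$ a Lie bialgebra: the $1$-cocycle (compatibility) condition, the coantisymmetry of $\Delta$, and the co-Jacobi identity. The first two turn out to be essentially free, and the third carries the real content of the equivalence.

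First I would observe that the compatibility condition holds automatically. Indeed, \eqref{r-coboundary} says precisely that $\Delta=\partial r$ is the image of the $0$-cochain $r\in\g\ot\g$ under the Chevalley--Eilenberg differential for the representation $\ad\ot\Id+\Id\ot\ad$ of $\g$ on $\g\ot\g$. Since $\partial^2=0$, any such coboundary is a $1$-cocycle, which is exactly the compatibility condition $\Delta[x,y]_\g=(\ad_x\ot\Id+\Id\ot\ad_x)\Delta(y)-(\ad_y\ot\Id+\Id\ot\ad_y)\Delta(x)$. Thus this condition imposes no restriction on $r$.

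Next, writing $r=\sum_i x_i\ot y_i$ so that $\Delta(a)=\sum_i\big([a,x_i]_\g\ot y_i+x_i\ot[a,y_i]_\g\big)$, I would compute $\Delta(a)+\tau\Delta(a)$ directly and recognise it as $(\ad_a\ot\Id+\Id\ot\ad_a)(r+\sigma(r))$. Hence $\Delta$ is coantisymmetric for every $a\in\g$ if and only if the first displayed condition $\big(\ad_x\ot\Id+\Id\ot\ad_x\big)(r+\sigma(r))=0$ holds for all $x\in\g$.

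The crux is the co-Jacobi identity $(\Id+\epsilon+\epsilon^2)(\Id\ot\Delta)\Delta=0$. Here I would expand $(\Id\ot\Delta)\Delta(a)$ as an element of $\g^{\ot 3}$, producing double-bracket terms of the form $[a,x_i]_\g\ot[x_i,x_j]_\g\ot y_j$ and the like, and then apply the cyclic symmetriser $\Id+\epsilon+\epsilon^2$. The goal is to show that, once the coantisymmetry (the first condition) is used to pair up and cancel the contributions in which $\ad_a$ sits in the inner slot, the surviving expression collapses to $\big(\ad_a\ot\Id\ot\Id+\Id\ot\ad_a\ot\Id+\Id\ot\Id\ot\ad_a\big)[[r,r]]$, with $[[r,r]]$ as in \eqref{r-matrix}; the three summands $[r_{12},r_{13}]$, $[r_{13},r_{23}]$, $[r_{12},r_{23}]$ arise precisely from the three cyclic positions in which the Jacobi identity of $\g$ is used to merge nested brackets. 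The co-Jacobi identity then holds for all $a$ exactly when the second displayed condition holds. The main obstacle is this last step: it is a sizeable bookkeeping computation in $\g^{\ot 3}$, in which one must carefully track the cyclic action of $\epsilon$ and repeatedly invoke the Jacobi identity, so in practice I would lean on the referenced proof in \cite{CP} rather than reproduce every term.
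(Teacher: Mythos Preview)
The paper does not prove this theorem: it is a classical result stated with attribution to \cite{CP} and used as a black box, so there is no ``paper's own proof'' to compare against. Your outline is correct and is exactly the standard argument one finds in Chari--Pressley (and elsewhere): the $1$-cocycle condition is automatic because $\Delta=\partial r$, coantisymmetry is equivalent to \eqref{eq:invr} via the identity $\Delta(a)+\tau\Delta(a)=(\ad_a\ot\Id+\Id\ot\ad_a)(r+\sigma(r))$, and the co-Jacobi identity, after a bookkeeping computation that uses \eqref{eq:invr}, reduces to $\g$-invariance of $[[r,r]]$. Your honest acknowledgment that the third step is a sizeable computation and that you would defer to \cite{CP} is entirely in keeping with how the paper itself treats the result.
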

In particular, the equation
\begin{eqnarray*}
[[r,r]]=0
\end{eqnarray*}
is called the {\bf classical Yang-Baxter equation} in a Lie algebra and a solution of the classical Yang-Baxter equation in a Lie algebra is also called a {\bf
        classical $r$-matrix}.

        Any $r\in \g\otimes \g$ induces a linear operator $r_+: \g^*\to \g$ defined by $r_+( \xi)= r(\xi,\cdot)$ for all $\xi\in\g^*.$
  Explicitly, writing $r=\sum_{i}x_i\otimes y_i$, the corresponding map $r_+$ is
\begin{equation}\label{sharp-condition}
    r_+(\xi)=\sum_{i}\langle \xi, x_i \rangle y_i,
    ~~\forall \xi\in \g^*.
\end{equation}

 Note that $r$ is skew-symmetric if and only if
\begin{equation}
    \langle r_+(\xi),\eta\rangle+\langle \xi,r_+(\eta)\rangle=0,\;\;\forall
   \xi,\eta\in \g^*.\label{eq:skew-symmetry}
\end{equation}

 Define $$r_-:=-r_+^*.$$
 Let us introduce a bracket $[\cdot,\cdot]_r$ on $\g^*$:
\begin{eqnarray}\label{r-bracket}
[\xi,\eta]_r=\ad_{r_+ \xi}^* \eta-\ad^*_{r_- \eta} \xi,\quad \forall \xi,\eta\in \g^*.
\end{eqnarray}
If $r$ satisfies the $\ad$-invariant condition
 \eqref{eq:invr} and the classical Yang-Baxter equation,  then $(\g^*,[\cdot,\cdot]_r)$ is a Lie algebra, which is denoted by $\g_r^*$. Moreover, $(\g,\g^*_r)$ constitutes a Lie bialgebra, which is called   a {\bf quasi-triangular Lie bialgebra}. If $r$ is skew-symmetric, it is called a {\bf triangular Lie bialgebra}.

Denote by $I$ the operator
\begin{eqnarray}\label{I}
I=r_+-r_-: \g^*\to \g.
\end{eqnarray}
Note that $I^*=I$, and the $\ad$-invariant condition \eqref{eq:invr} is equivalent to
\begin{equation}\label{eq:invI}
  I\circ \ad_x^*=\ad_x\circ I,\quad \forall x\in \g.
\end{equation} Actually $\frac{1}{2}I$ is the symmetric part of $r$.
If $r$ is skew-symmetric, then $I=0$. Factorizable Lie bialgebras  are however
concerned with the opposite case that $I$ is nondegenerate; see \rm{(\cite{RS})}.
\begin{defi}
A quasi-triangular Lie bialgebra $(\g, \g^*_r)$ defined by $r\in \g\otimes \g$ is called {\bf factorizable} if the linear map $I: \g^*\to \g$ defined by \eqref{I} is a linear isomorphism of vector spaces.
\end{defi}

\begin{defi}
Let $(\g,[\cdot,\cdot]_\g)$ be a Lie algebra.
A linear operator $B:\g\longrightarrow \g$ is called a {\bf Rota-Baxter operator of weight $\lambda$} if
\begin{equation*}
 [Bx,By]_\g=B\Big([Bx,y]_\g+ [x,By]_\g+\lambda[x,y]_{\g}\Big), \quad \forall x, y \in \g.
\end{equation*}
Moreover, a Lie algebra $(\g,[\cdot,\cdot]_\g)$ with a Rota-Baxter operator $B$ is
called a {\bf Rota-Baxter Lie algebra of weight $\lambda$}. We denote it by $(\g,[\cdot,\cdot]_\g,B)$.
\end{defi}

Let $(\g, [\cdot,\cdot]_\g)$ be a Lie algebra  and $B:\g\lon\g$  a Rota-Baxter operator of weight $\lambda$ on $\g$. Then there is a new Lie bracket $[\cdot,\cdot]_B$ on $\g$ defined by
$$
[x,y]_B=[Bx,y]_\g+[x,By]_\g+\lambda[x,y]_\g.
$$
The Lie algebra $(\g,[\cdot,\cdot]_B)$ is called the {\bf descendent Lie algebra}, and denoted by $\g_B$. It is obvious that $B$ is a Lie algebra homomorphism from $\g_B$ to $\g$:
$$
B([x,y]_B)=[Bx,By]_\g.
$$

\begin{pro}\label{rota-baxterLie-ave}
  Let $(\g,[\cdot,\cdot]_\g,B)$ be a Rota-Baxter Lie algebra of weight $\lambda$ and $R:\g\rightarrow\g$  a Reynolds operator on $\g$. If $R\circ B=B\circ R, $ then $R$ is also a Reynolds operator on the
  descendent Lie algebra $(\g,[\cdot,\cdot]_B)$.
\end{pro}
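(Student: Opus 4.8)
The plan is to verify the Reynolds identity \eqref{Reynolds-operator} for $R$ with respect to the descendent bracket $[\cdot,\cdot]_B$ by a direct computation, reducing everything to the Reynolds identity that $R$ already satisfies on $(\g,[\cdot,\cdot]_\g)$. The essential leverage is the hypothesis $R\circ B=B\circ R$, which lets us interchange $B$ and $R$ freely; without it the terms produced below would not match the input of the Reynolds identity, so this is where the commutativity assumption is genuinely used.

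First I would expand the left-hand side using the definition of the descendent bracket,
$$[Rx,Ry]_B=[B(Rx),Ry]_\g+[Rx,B(Ry)]_\g+\lambda[Rx,Ry]_\g,$$
and then rewrite $B(Rx)=R(Bx)$ and $B(Ry)=R(By)$ by commutativity. This presents $[Rx,Ry]_B$ as a sum of three brackets of the shape $[R(\,\cdot\,),R(\,\cdot\,)]_\g$, namely $[R(Bx),Ry]_\g$, $[Rx,R(By)]_\g$, and $[Rx,Ry]_\g$, to each of which the Reynolds identity on $\g$ is applicable.

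Next I would apply \eqref{Reynolds-operator} on $(\g,[\cdot,\cdot]_\g)$ separately to each of these three terms: to $[R(Bx),Ry]_\g$ with the pair $(Bx,y)$, to $[Rx,R(By)]_\g$ with the pair $(x,By)$, and to $\lambda[Rx,Ry]_\g$ with the pair $(x,y)$. Each application converts a term $[Ra,Rb]_\g$ into $R\big([Ra,b]_\g+[a,Rb]_\g-[Ra,Rb]_\g\big)$, and by linearity of $R$ these three outputs collect into a single expression $R(\cdots)$ whose argument is a sum of nine $\g$-brackets (six unweighted, three carrying the factor $\lambda$).

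Finally I would check that the argument of $R$ so obtained is exactly $[Rx,y]_B+[x,Ry]_B-[Rx,Ry]_B$. Expanding these three descendent brackets by definition, again using $B(Rx)=R(Bx)$ and $B(Ry)=R(By)$, produces nine $\g$-brackets, and a term-by-term comparison shows they coincide with the nine terms from the previous step. I expect this bookkeeping to be the only delicate point: the weight-$\lambda$ terms and the mixed $R$/$B$ terms must be matched carefully, but there is no genuine obstruction, and the desired identity $[Rx,Ry]_B=R\big([Rx,y]_B+[x,Ry]_B-[Rx,Ry]_B\big)$ closes once the accounting is done.
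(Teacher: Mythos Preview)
Your proposal is correct and follows essentially the same approach as the paper's proof: expand $[Rx,Ry]_B$, use $R\circ B=B\circ R$ to rewrite each summand as $[R(\cdot),R(\cdot)]_\g$, apply the Reynolds identity on $\g$ termwise, and then regroup the nine resulting $\g$-brackets into $R\big([Rx,y]_B+[x,Ry]_B-[Rx,Ry]_B\big)$. The paper carries out exactly this computation line by line, so there is nothing to add.
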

\begin{proof}
Let $(\g,[\cdot,\cdot]_\g,B)$ be a Rota-Baxter Lie algebra of weight $\lambda$. For all $x,y \in \g$, we have
\begin{eqnarray*}
[Rx,Ry]_{B}&=&[B(Rx),Ry]_{\g}+[Rx,B(Ry)]_{\g}+\lambda[Rx,Ry]_{\g}\\
&=&[R(Bx),Ry]_{\g}+[Rx,R(By)]_{\g}+\lambda[Rx,Ry]_{\g}\\
&=&R[R(Bx),y]_{\g}+R[Bx,Ry]_{\g}-R[R(Bx),Ry]_{\g}\\
&&+R[x,R(By)]_{\g}+R[Rx,By]_{\g}-R[Rx,R(By)]_{\g}\\
&&+\lambda R[x,Ry]_{\g}+\lambda R[Rx,y]_{\g}-\lambda R[Rx,Ry]_{\g}\\
&=&R[Rx,y]_{B}+R[x,Ry]_{B}-R[Rx,Ry]_{B},
\end{eqnarray*}
which implies that $R$ is a Reynolds operator on the
  descendent Lie algebra $(\g,[\cdot,\cdot]_B).$
 \end{proof}

\begin{defi}\rm{(\cite{Lang-Sheng})}
Let $(\g,[\cdot,\cdot]_\g,B)$ be a  Rota-Baxter Lie algebra of weight $\lambda$, and $S\in \otimes ^2\g^*$ a nondegenerate symmetric bilinear form.   The triple  $(\g,B,S)$ is called a {\bf quadratic Rota-Baxter Lie algebra of weight $\lambda$} if $(\g,S)$ is a quadratic Lie algebra and the following compatibility condition holds:
\begin{eqnarray}
\label{RBmanin}
S( x, {B} y)+S({B}x, y)+\lambda S(x,y)&=&0,\quad \forall x,y\in \g.
\end{eqnarray}

 It is known that quadratic Rota-Baxter Lie algebras of  weight 1 (or any nonzero weight) are one-to-one correspondence with factorizable Lie bialgebras.

\end{defi}

\begin{thm}\label{converseW0}\rm{(\cite{MG,Lang-Sheng,STS})}
    Let $(\g, B,S)$ be a quadratic Rota-Baxter Lie algebra of weight $\lambda$ and $\huaI_S:\g^*\to\g$ the induced linear isomorphism given by $\langle \huaI_S^{-1} x,y\rangle:=S(x,y)$. Then  $r^{B,S}\in \g\otimes \g$ determined by
\[r^{B,S}_+:=B\circ \huaI_S:\g^*\to \g, \quad r^{B,S}_+(\xi)=r^{B,S}(\xi,\cdot),\quad \forall\xi\in \g^*,\]
satisfies the classical Yang-Baxter equation, and gives rise to a triangular (if $\lambda=0$) or a factorizable (if $\lambda\neq 0$) Lie bialgebra $(\g,\g_{r^{B,S}}^*)$ with a comultiplication $\Delta_{r^{B,S}}$ defined as $\Delta_{r^{B,S}}(x)=(\ad_x\otimes \Id +\Id\otimes \ad_x)r^{B,S}$. Moreover, for all $x,y\in \g$
$$
[I_S^{-1}x,I_S^{-1}y]_{\g^*_{r_{B,S}}}=I_S^{-1}([x,y]_B).
$$
\end{thm}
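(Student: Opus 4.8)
The plan is to translate every assertion into operator identities on $\g^*$ through the isomorphism $\huaI_S = I_S$, and then exploit that $B$ is a Lie algebra homomorphism from the descendent algebra $\g_B$ to $\g$. First I would record the two consequences of the defining data. Symmetry of $S$ gives $\huaI_S^* = \huaI_S$, and rewriting the compatibility \eqref{RBmanin} as an identity of maps $\g\to\g^*$ yields $B^* = -\huaI_S^{-1}B\huaI_S - \lambda\,\Id$. Since $r^{B,S}_- := -(r^{B,S}_+)^* = -\huaI_S B^*$ by definition, substituting gives the clean formulas
\[
r^{B,S}_+ = B\huaI_S, \qquad r^{B,S}_- = (B+\lambda\,\Id)\huaI_S,
\]
whence $I = r^{B,S}_+ - r^{B,S}_- = -\lambda\,\huaI_S$. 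As $\huaI_S$ is an isomorphism, this already settles the dichotomy: for $\lambda=0$ we get $I=0$, so $r^{B,S}$ is skew-symmetric (the triangular case), while for $\lambda\neq 0$ the map $I$ is nondegenerate (the factorizable case, once the bialgebra structure is in place).

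Next I would verify the two ingredients that Theorem \ref{r-bialgebra} requires, namely \eqref{eq:invr} and the classical Yang-Baxter equation. Invariance of $S$ is equivalent to $\huaI_S\,\ad_x^* = \ad_x\,\huaI_S$ (equivalently $\ad_x^* = \huaI_S^{-1}\ad_x\huaI_S$), so $I = -\lambda\,\huaI_S$ satisfies \eqref{eq:invI}, giving \eqref{eq:invr}. For the final displayed identity I would compute $[\,\huaI_S^{-1}x,\huaI_S^{-1}y\,]_{r^{B,S}}$ directly from \eqref{r-bracket}: since $r^{B,S}_+\huaI_S^{-1}x = Bx$ and $r^{B,S}_-\huaI_S^{-1}y = (B+\lambda\,\Id)y$, applying $\ad^*_{Bx}\huaI_S^{-1}y = \huaI_S^{-1}[Bx,y]_\g$ and $\ad^*_{(B+\lambda)y}\huaI_S^{-1}x = \huaI_S^{-1}[(B+\lambda)y,x]_\g$ collapses the expression to $\huaI_S^{-1}\big([Bx,y]_\g + [x,By]_\g + \lambda[x,y]_\g\big) = \huaI_S^{-1}[x,y]_B$. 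This is precisely the claimed factorization formula, and it shows simultaneously that $[\cdot,\cdot]_{r^{B,S}}$ is a genuine Lie bracket, with $\huaI_S^{-1}\colon \g_B \to \g^*_{r^{B,S}}$ a Lie algebra isomorphism.

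For the Yang-Baxter equation I would use the general identity that, for any $r = \sum_i a_i\otimes b_i \in\g\otimes\g$ and all $\xi,\eta\in\g^*$,
\[
[r_+\xi, r_+\eta]_\g - r_+\big([\xi,\eta]_r\big) = \big\langle [[r,r]],\ \xi\otimes\eta\otimes\cdot\big\rangle,
\]
the contraction of the first two legs of $[[r,r]]$ against $\xi,\eta$; this is a short bookkeeping calculation from \eqref{r-matrix} and \eqref{r-bracket}. Thus $[[r^{B,S},r^{B,S}]]=0$ is equivalent to $r^{B,S}_+$ being a Lie algebra homomorphism $\g^*_{r^{B,S}}\to\g$. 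But $r^{B,S}_+ = B\circ\huaI_S$ is the composite of the Lie algebra isomorphism $\huaI_S\colon \g^*_{r^{B,S}}\to\g_B$ (the inverse of the map from the previous step) with the homomorphism $B\colon\g_B\to\g$, hence is itself a homomorphism; therefore $[[r^{B,S},r^{B,S}]]=0$. Since \eqref{eq:invr} holds and $[[r^{B,S},r^{B,S}]]$ is trivially $\ad$-invariant, Theorem \ref{r-bialgebra} yields the Lie bialgebra $(\g,\g^*_{r^{B,S}})$ with $\Delta_{r^{B,S}}$ as stated, triangular for $\lambda=0$ and factorizable for $\lambda\neq 0$ by the first step.

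The main obstacle is the Yang-Baxter step: the equivalence between $[[r,r]]=0$ and the homomorphism property of $r_+$ is what makes the whole argument close, and establishing the displayed bracket identity cleanly—tracking the $r_-$ contribution and the sign conventions of \eqref{r-bracket}—is the part most prone to error. Everything else reduces to the self-adjointness and invariance of $S$ together with the Rota-Baxter identity, packaged in the statement that $B\colon\g_B\to\g$ is a homomorphism.
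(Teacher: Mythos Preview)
Your argument is correct. The paper itself does not prove this theorem but cites it from \cite{MG,Lang-Sheng,STS}; your reconstruction follows the standard route taken in those references, translating the quadratic Rota-Baxter data through $\huaI_S$ into the operators $r_+^{B,S}=B\huaI_S$, $r_-^{B,S}=(B+\lambda\Id)\huaI_S$, and then reducing the Yang-Baxter equation to the homomorphism property of $B\colon\g_B\to\g$.
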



Now we introduce the notion of a Reynolds  operator on a quadratic Rota-Baxter Lie algebra, and show that  a Reynolds  operator on a quadratic Rota-Baxter Lie algebra naturally induces a Reynolds Lie bialgebra.



\begin{defi}
  Let  $(\g, B,S)$ be a quadratic Rota-Baxter Lie algebra of weight $\lambda$. A linear map $R:\g\to\g$ is called a Reynolds operator on  $(\g, B,S)$, if $B\circ R^*=-R\circ B$.
\end{defi}

\begin{rmk}
    If $(\g,B,S)$ is a quadratic Rota-Baxter Lie algebra of weight $\lambda$, then from condition  $B\circ R^*=-R\circ B$ it follows that $\lambda(R+R^*)=0$. That is, if $\lambda\neq 0$, then any Reynolds operator on $(\g,B,S)$ is skew-symmetric.
\end{rmk}
\begin{pro}\label{inv-Der}
    Let $(\g,B,S)$ be a quadratic Rota-Baxter Lie algebra and $d\in\Der(\g)$ be a derivation of $\g$ such that $\Id-d$ is an invertible map.  Then the Reynolds operator $R=(\Id-d)^{-1}$ is a Reynolds operator on $(\g,B,S)$ if and only if
   \begin{equation}\label{der-Rey-RB}
    d\circ B+B\circ d^*=2B.
    \end{equation}
    In particular, if the derivation $d$ is an inner derivation, then the Reynolds operator $R$ is a Reynolds operator on $(\g,B,S)$ if and only if
    \begin{equation}\label{der-inn}
        [B,d]=-2B.
    \end{equation}
\end{pro}

\begin{proof}
If $R=(\Id-d)^{-1}$, then the equality $B\circ R^*=-R\circ B$ is equivalent to the following condition
$$
(\Id-d)\circ B=-B\circ(\Id-d)^*
$$
that is equivalent to \eqref{der-Rey-RB}.
If $d$ is the inner derivation, that is, $d=\ad_a$ for some $a\in\g$, then $d^*=-d$. Indeed, for any $x,y\in \g$, by \eqref{Invariant}, we have
  $$ S(d^*(x),y)=S(x,d(y))=S(x,[a,y])=-S([a,x],y)=S(-d(x),y). $$
Thus, if $d$ is the inner derivation, then $R=(\Id-d)^{-1}$ is a Reynolds operator on $(\g,B,S)$  if and only if the equality \eqref{der-inn} holds.
\end{proof}

\begin{thm}
    Let  $(\g, B,S)$ be a quadratic Rota-Baxter Lie algebra of weight $\lambda$ and $R:\g\rightarrow \g$ be a Reynolds operator on $(\g, B,S)$. Then $-R^*$ is a Reynolds operator on the descendent Lie algebra $\g_B.$
\end{thm}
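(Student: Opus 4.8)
The plan is to transport the statement to the dual Lie algebra $\g^{*}_{r^{B,S}}$ supplied by Theorem \ref{converseW0}, where the Reynolds condition becomes transparent. Throughout, $R^{*}$ denotes the adjoint of $R$ with respect to $S$, i.e. $S(R^{*}x,y)=S(x,Ry)$ (the only reading under which $-R^{*}$ is an operator on $\g$); in this notation the quadratic compatibility \eqref{RBmanin} says $B^{*}=-B-\lambda\,\Id$, and $R$ being a Reynolds operator on $(\g,B,S)$ says $B\circ R^{*}=-R\circ B$. Let $S^{\sharp}:\g\to\g^{*}$, $\langle S^{\sharp}x,y\rangle=S(x,y)$, so that $S^{\sharp}=\huaI_S^{-1}$, and recall from Theorem \ref{converseW0} that $S^{\sharp}$ is a Lie algebra isomorphism from the descendent algebra $\g_B$ onto $\g^{*}_{r^{B,S}}$. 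Set $Q:=S^{\sharp}\circ(-R^{*})\circ\huaI_S:\g^{*}\to\g^{*}$, the operator conjugate to $-R^{*}$ under $S^{\sharp}$. Since a Lie isomorphism carries Reynolds operators to Reynolds operators and $S^{\sharp}\circ(-R^{*})=Q\circ S^{\sharp}$, it suffices to show that $Q$ is a Reynolds operator on $\g^{*}_{r^{B,S}}$.

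The crux is the pair of intertwining relations $r_{+}\circ Q=R\circ r_{+}$ and $r_{-}\circ Q=R\circ r_{-}$. Using $\huaI_S\circ S^{\sharp}=\Id$ and $r_{+}=r^{B,S}_{+}=B\circ\huaI_S$ from Theorem \ref{converseW0},
\[
r_{+}\circ Q=B\,\huaI_S\,S^{\sharp}(-R^{*})\huaI_S=-B\,R^{*}\,\huaI_S=RB\,\huaI_S=R\circ r_{+},
\]
where the middle equality is exactly $B\circ R^{*}=-R\circ B$; the companion relation $r_{-}\circ Q=R\circ r_{-}$ follows by the dual computation, using $r_{-}=-r_{+}^{*}$ and the quadratic relation. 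A direct check also gives $\langle Q\zeta,b\rangle=-\langle\zeta,Rb\rangle$ for all $\zeta\in\g^{*}$ and $b\in\g$.

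It remains to expand the Reynolds identity for $Q$ on $\g^{*}_{r^{B,S}}$ using the bracket $[\xi,\eta]_{r}=\ad^{*}_{r_{+}\xi}\eta-\ad^{*}_{r_{-}\eta}\xi$. I would replace $r_{+}(Q\xi)$ by $R(r_{+}\xi)$ and $r_{-}(Q\eta)$ by $R(r_{-}\eta)$ via the two relations above, then pair the whole identity with an arbitrary $b\in\g$, using $\langle\ad^{*}_{a}\zeta,b\rangle=-\langle\zeta,[a,b]_{\g}\rangle$ and $\langle Q\zeta,b\rangle=-\langle\zeta,Rb\rangle$. The $\xi$- and $\eta$-homogeneous components then separate, and each collapses precisely to the Reynolds identity for $R$ evaluated at the pair $(r_{+}\xi,b)$, respectively $(r_{-}\eta,b)$, namely $[Rc,Rb]_{\g}=R([Rc,b]_{\g}+[c,Rb]_{\g}-[Rc,Rb]_{\g})$ with $c\in\g$. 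These hold because $R$ is in particular a Reynolds operator on the Lie algebra $\g$, which finishes the proof.

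I expect the two intertwining identities $r_{\pm}\circ Q=R\circ r_{\pm}$ to be the only real obstacle: they are where the compatibility $B\circ R^{*}=-R\circ B$, the quadratic condition, and the factorizable structure $r^{B,S}_{+}=B\circ\huaI_S$ must be combined. Once they are available the remaining expansion is purely formal and simply exposes the Reynolds identity of $R$ on $\g$ restricted to the images of $r_{+}$ and $r_{-}$; alternatively, the same reduction can be carried out directly on $\g_B$ by pairing the identity for $-R^{*}$ with $S$, computing the $S$-adjoint of left multiplication as $B\,\ad_{a}-\ad_{Ba}$, and simplifying with \eqref{RBmanin} and $B\circ R^{*}=-R\circ B$.
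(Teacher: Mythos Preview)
Your argument is correct. The intertwining identities $r_{\pm}\circ Q=R\circ r_{\pm}$ do hold (the second one reduces, after using $r_{-}=-B^{*}\circ\huaI_S$ with $B^{*}=-B-\lambda\Id$, to $B^{*}R^{*}=-RB^{*}$, which follows from $BR^{*}=-RB$ together with $\lambda(R+R^{*})=0$), and once you have them, the pairing reduction you describe goes through exactly as stated: each of the $\langle\eta,\cdot\rangle$ and $\langle\xi,\cdot\rangle$ blocks collapses to an instance of the Reynolds identity for $R$ on $\g$ with first argument $r_{+}\xi$ or $r_{-}\eta$.

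Your route is, however, organized differently from the paper's. The paper stays on $\g_B$ throughout: it expands $[-R^{*}x,-R^{*}y]_B=[B(R^{*}x),R^{*}y]_\g-[R^{*}x,B^{*}(R^{*}y)]_\g$, uses $BR^{*}=-RB$ and the companion $B^{*}R^{*}=-RB^{*}$, and then pairs each bracket with an arbitrary $z$ via $S$ to transfer the outer $R^{*}$ across as $R$ and invoke the Reynolds identity for $R$ on $\g$ directly. You instead pass through the isomorphism $S^{\sharp}:\g_B\to\g^{*}_{r^{B,S}}$ from Theorem~\ref{converseW0} and recast the problem as the Reynolds identity for the conjugate operator $Q$ on $\g^{*}_{r^{B,S}}$, with the two intertwining relations packaging the algebraic input. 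This buys you something: since $Q=-R^{\vee}$ coincides with the operator appearing in Theorem~\ref{thm:FL}, your computation essentially proves that theorem at the same time, whereas the paper proves the present result first and then invokes it in Theorem~\ref{thm:FL}. The paper's approach, on the other hand, is more self-contained in that it does not appeal to the $r$-bracket formula or the Lie bialgebra picture at all.
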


\begin{proof}
Since $(\g, B,S)$ is a quadratic Rota-Baxter Lie algebra of weight $\lambda$, then the adjoint map
$B^*=-B-\lambda\mathrm{id}$ is also a Rota-Baxter operator of weight $\lambda$.
For any $x,y\in\g$ we have
\begin{eqnarray*}
[-R^*x,-R^*y]_B&=&[B(R^*x),R^*y]_{\g}+[R^*x,B(R^*y)]_{\g}+\lambda[R^*x,R^*y]_{\g}\\
&=&[B(R^*x),R^*y]_{\g}-[R^*x,B^*(R^*y)]_{\g}.
\end{eqnarray*}
 Recall that if $\lambda\neq 0$, then $R^*=-R$. It means that for any weight $\lambda$, the map $B^*$ also satisfies the condition $R\circ B^*=-B^*\circ R^*$. Consider $z\in\g$. Then
\begin{eqnarray*}
    S([B(R^*x),R^*y]_{\g},z)&=&S(-[R(Bx),R^*y]_{\g},z)\\
    &=&-S(y,R([z,R(Bx)]_{\g}))\\
    &=&-S(y,[Rz,R(Bx)]_{\g}+R([Rz,R(Bx)]_{\g})-R([Rz,Bx]_{\g}))\\
    &=&-S(R^*([R(Bx),y]_{\g})+R^*([R(Bx),R^*y]_{\g})-R^*([Bx,R^*y]_{\g}),z)\\
   &=&S(R^*([(B\circ R^*)x,y]_{\g})+R^*([(B\circ R^*)x,R^*y]_{\g})+R^*([Bx,R^*y]_{\g},z).
\end{eqnarray*}
Therefore,
$$[B(R^*x),R^*y]_{\g}=R^*([(B\circ R^*)x,y]_{\g}+[(B\circ R^*)x,R^*y]_{\g}+[Bx,R^*y]_{\g}).$$
Similarly,
\begin{eqnarray*}
-[R^*x,B^*(R^*y)]_{\g}&=&[B^*(R^*y),R^*x]_{\g}\\
&=&-R^*([(B^*\circ R^*)y),x]_{\g})+R^*([(B^*\circ R^*)y),R^*x]_{\g})+R^*([B^*y,R^*x]_{\g})\\
&=&R^*([x,(B\circ R^*)y]_{\g}+[R^*x,(B\circ R^*)y]_{\g}+[R^*x,By]_{\g}\\
&&+\lambda [x,R^*y]_{\g}+\lambda [R^*x,R^*y]_{\g}+\lambda [R^*x,y]_{\g}).
\end{eqnarray*}
Therefore,
\begin{eqnarray*}
    [-R^*x,-R^*y]_B&=&[B(R^*x),R^*y]_{\g}-[R^*x,B^*(R^*y)]_{\g}\\
    &=&R^*([(B\circ R^*)x,y]_{\g}+[(B\circ R^*)x,R^*y]_{\g}+[Bx,R^*y]_{\g})\\
    &+&R^*([x,(B\circ R^*)y]_{\g}+[R^*x,(B\circ R^*)y]_{\g}+[R^*x,By]_{\g})\\
    &+&R^*(\lambda [x,R^*y]_{\g}+\lambda [R^*x,R^*y]_{\g}+\lambda [R^*x,y]_{\g})\\
    &=&R^*([R^*x,y]_B+[x,R^*y]_B+[R^*x,R^*y]_B)\\
    &=&-R^*([-R^*x,y]_B+[x,-R^*y]_B-[-R^*x,-R^*y]_B).
\end{eqnarray*}
Thus, $-R^*$ is a Reynolds operator on $\g_B$.
\end{proof}

\begin{thm}\label{thm:FL}
Let  $(\g, B,S)$ be a quadratic Rota-Baxter Lie algebra of weight $\lambda$. Let $R:\g\to\g$ be a Reynolds operator on $(\g,B,S)$. Then $(\g,\g_{r^{B,S}}^*,R)$ is a Reynolds Lie bialgebra.
\end{thm}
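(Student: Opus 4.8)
The plan is to verify directly the two defining conditions of a Reynolds Lie bialgebra for the triple $(\g,\g_{r^{B,S}}^*,R)$. Condition (i), that $R$ be a Reynolds operator on the Lie algebra $\g$, is part of the hypothesis that $R$ is a Reynolds operator on $(\g,B,S)$ (indeed the proof of the preceding theorem already used the Reynolds identity for $R$ on $\g$). Thus the whole content of the statement is condition (ii): the usual linear dual $-R^*\colon\g^*\to\g^*$ must be a Reynolds operator on the Lie algebra $\g_{r^{B,S}}^*$. I would obtain this by transporting, along the isomorphism of Theorem~\ref{converseW0}, the fact already proved in the preceding theorem.

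First I would record the two inputs and keep careful track of a notational collision. The preceding theorem asserts that $-R^*$ is a Reynolds operator on the descendent Lie algebra $\g_B$; there $R^*\colon\g\to\g$ is the adjoint of $R$ \emph{with respect to $S$} (a self-map of $\g$), characterised by $S(R^*x,y)=S(x,Ry)$. By contrast, the $-R^*$ demanded in the bialgebra definition is the genuine linear dual, a self-map of $\g^*$. On the other hand, Theorem~\ref{converseW0} supplies the linear isomorphism $\huaI_S\colon\g^*\to\g$ with $\langle\huaI_S^{-1}x,y\rangle=S(x,y)$ together with the identity $[\huaI_S^{-1}x,\huaI_S^{-1}y]_{\g_{r^{B,S}}^*}=\huaI_S^{-1}([x,y]_B)$, which says precisely that $S^\sharp:=\huaI_S^{-1}\colon\g\to\g^*$ is a Lie algebra isomorphism from $\g_B$ onto $\g_{r^{B,S}}^*$.

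The key observation is that $S^\sharp$ reconciles these two operators. For the $S$-adjoint $R^*$ on $\g$ one computes $\langle S^\sharp(R^*x),y\rangle=S(R^*x,y)=S(x,Ry)=\langle S^\sharp x,Ry\rangle$, so that $S^\sharp\circ R^*$ equals the usual linear dual of $R$ postcomposed with $S^\sharp$; equivalently, the usual dual of $R$ is $\huaI_S^{-1}\circ R^*\circ\huaI_S$. Hence conjugating the self-map $-R^*$ of $\g_B$ by the isomorphism $S^\sharp$ produces exactly the operator $-R^*$ on $\g_{r^{B,S}}^*$ occurring in the definition of a Reynolds Lie bialgebra.

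To finish I would invoke the elementary transport principle that Reynolds operators are preserved under any Lie algebra isomorphism intertwining them: if $\phi\colon(\frka,[\cdot,\cdot]_{\frka})\to(\frkb,[\cdot,\cdot]_{\frkb})$ is an isomorphism and $P$ is a Reynolds operator on $\frka$, then $\phi P\phi^{-1}$ is a Reynolds operator on $\frkb$, which is immediate upon applying $\phi$ to the Reynolds identity for $P$. Taking $\phi=S^\sharp$, $\frka=\g_B$, $\frkb=\g_{r^{B,S}}^*$ and $P=-R^*$ (the $S$-adjoint version), the preceding theorem gives that $P$ is Reynolds on $\g_B$, and the conjugate $\phi P\phi^{-1}$ is the usual dual $-R^*$ on $\g_{r^{B,S}}^*$; this establishes condition (ii). Together with condition (i) this shows $(\g,\g_{r^{B,S}}^*,R)$ is a Reynolds Lie bialgebra. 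The one genuine obstacle, and the point I would flag most carefully, is the double meaning of the symbol $-R^*$ — the $S$-adjoint self-map of $\g$ of the preceding theorem versus the honest linear dual on $\g^*$ of the bialgebra definition — which the isomorphism $\huaI_S$ is exactly what resolves; note also that the argument is uniform in the weight $\lambda$.
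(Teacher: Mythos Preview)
Your proposal is correct and follows essentially the same route as the paper's own proof: the paper likewise notes that $(\g,\g_{r^{B,S}}^*)$ is already a Lie bialgebra by Theorem~\ref{converseW0}, distinguishes the linear dual $R^*$ from the $S$-adjoint $R^{*,S}$, records the intertwining $R^*\circ \huaI_S^{-1}=\huaI_S^{-1}\circ R^{*,S}$, and then transports the Reynolds identity for $-R^{*,S}$ on $\g_B$ (from the preceding theorem) through the Lie isomorphism $\huaI_S^{-1}:\g_B\to\g_{r^{B,S}}^*$ to obtain that $-R^*$ is Reynolds on $\g_{r^{B,S}}^*$. Your explicit flagging of the two meanings of ``$-R^*$'' is exactly the subtlety the paper pauses on as well.
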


\begin{proof}
Let  $(\g, B,S)$ be a quadratic Rota-Baxter Lie algebra of weight $\lambda$. By Theorem \ref{converseW0}, the pair $(\g,\g_{r^{B,S}}^*)$ is a Lie bialgebra (triangular or factorizable). Since $R$ is a Reynolds operator on the Lie algebra $\g$, we only need to show that $-R^*$ is a Reynolds operator on the Lie algebra $\g_{r^{B,S}}^*$. To avoid an ambiguity, in what follows we will mean by $R^*$ the dual map defined as
$\langle R^*f,x\rangle=\langle f,Rx\rangle$,
while by $R^{*,S}$ we will mean the adjoint map defined as $S(Ra,b)=S(a,R^{*,S}b)$. Obviously, $R^*\circ I_S^{-1}=I_S^{-1}\circ R^{*,S}$. For any $f=I_S^{-1}(x),g=I_S^{-1}(y)\in \g^*$ and $c\in\g$, we have
\begin{eqnarray*}
\left[(-R^*)f,(-R^*)g\right]_{g^*_{r^{B,S}}} &=& \left[I_S^{-1}(-R^{*,S}a),I_S^{-1}(-R^{*,S}b)\right ]\\ &=&I_S^{-1}\left(\left[-R^{*,S}x,-R^{*,S}y\right]_B\right)\\
&=&I_S^{-1}\left( -R^{*,S}([-R^{*,S}x,y]_B+[x,-R^{*,S}y]_B-[-R^{*,S}x,-R^{*,S}y]_B) \right)\\
&=&-R^*([-R^*f,g]_{g^*_{r^{B,S}}}+[f,-R^*g]_{g^*_{r^{B,S}}}-[-R^*f,-R^*g]_{g^*_{r^{B,S}}}).
\end{eqnarray*}
Therefore, $(\g,\g_{r^{B,S}}^*,R)$ is a Reynolds Lie bialgebra.
\end{proof}

\begin{ex}
Let $sl(2,\mathbb C)$ be the Lie algebra from Example \eqref{dre-Rey-ex}.
Let $\{h^*, x^*, y^*\}$  be the dual basis of the dual vector space $\g^*.$
Moreover, $(\g^*,[\cdot,\cdot]_{\g^*})$ is a Lie algebra, where the Lie bracket $[\cdot,\cdot]_{\g^*}$ are given by
\begin{eqnarray*}
[h^*,x^*]_{\g^*}=\frac{1}{4}x^*,\quad [h^*,y^*]_{\g^*}=\frac{1}{4}y^*,\quad [x^*,y^*]_{\g^*}=0,
\end{eqnarray*}
and $(\g,\g^*)$ is a Lie bialgebra (\cite{KM}). Let $S$ be the nondegenerate bilinear invariant form defined as $S(h,h)=2S(x,y)=2$, $S(h,x)=S(h,y)=0$.
Then $(\g,S)$ be the quadratic Lie algebra and an element $r=h\otimes x-x\otimes h$ is
a skew-symmetric solution of the classical Yang-Baxter equation in Lie algebra $sl(2,\mathbb C)$.
The corresponding map $B$ acts as
    \begin{equation}\label{RB-Scew}
    B(h)=2x,\ B(x)=0,\ B(y)=-h.
    \end{equation}
It is straightforward to check that the map $B$ satisfies  the condition \eqref{Reynolds-operator}, that is, a map $R=B$ is a skew-symmetric Reynolds operator. Obviously, $RB=BR$. Then $(\g,\g^*_r,R)$ is a Reynolds Lie bialgebra by Theorem \ref{thm:FL}.
\end{ex}

\begin{ex}
    Let $\g=sl_2(\mathbb C)$, $r=h\otimes x-x\otimes h$ be the skew-symmetric solution of the classical Yang-Baxter
    equation from the previous example and $(\g,\g^*_r)$ be
    the corresponding triangular Lie bialgebra. Note that the corresponding map $B$ defined by \eqref{RB-Scew}
     satisfies $B=-\ad_x$. Consider the Reynolds operator $R=(\Id-\ad_{h})^{-1}$ from Example \ref{dre-Rey-ex}
      (acting as $R(h)=h,\ R(x)=-x,\ R(y)=\frac{1}{3}y$). Then
    $$
    [B,d]=[\ad_{-x},\ad_{h}]=\ad_{[-x,h]}=2\ad_x=-2B.
    $$
    Thus, by Proposition \ref{inv-Der}, $R$ is a Reynolds operator on $(\g,B,S)$. Therefore, by Theorem \ref{thm:FL}, $(\g,\g_r,R)$ is a Reynolds Lie bialgebra.
\end{ex}

\section{The classical Yang-Baxter equation, relative Rota-Baxter operators and Reynolds pre-Lie algebras}\label{sec:YBE}
In this section, first we introduce the notion of the classical Yang-Baxter equation in a Reynolds Lie algebra, whose solution gives rise to a Reynolds Lie bialgebra. Then we introduce the notion of relative Rota-Baxter operators on a Reynolds Lie algebra with respect to a representation, which can give rise to solutions of the classical Yang-Baxter equation in the semidirect product Reynolds Lie algebra. Finally we introduce the notion of a Reynolds pre-Lie algebra, which can also produce  solutions of the classical Yang-Baxter equation in certain semidirect product Reynolds Lie algebra.

\subsection{The classical Yang-Baxter equation in a Reynolds Lie algebra}




Let $(\g,\Delta)$ be a Lie coalgebra. For any $x\in \g,$ we denote $\Delta(x)= x_{(1)}\otimes x_{(2)}.$
\begin{defi}
A Reynolds Lie coalgebra is a Lie coalgebra $(\g,\Delta)$ equipped with an operator $R$ such that
\begin{eqnarray}\label{Lie-coalgebra-Reynolds}
  (R\otimes R)\Delta=(R\otimes\Id+\Id\otimes R-R\otimes R)\Delta R.
\end{eqnarray}
\end{defi}
In Sweedler's notation, this means
\begin{eqnarray*}
  R(x_{(1)})\otimes R(x_{(2)})= R((Rx)_{(1)})\otimes (Rx)_{(2)}+ (Rx)_{(1)}\otimes R((Rx)_{(2)}) -R((Rx)_{(1)})\otimes R((Rx)_{(2)}).
\end{eqnarray*}
 We use the triple $(\g,\Delta,R)$ to denote a Reynolds Lie coalgebra. The following Proposition can be directly obtained.

\begin{pro}\label{prop52}
    A triple $(\g,\Delta,R)$ is  a Reynolds Lie coalgebra if and only if $(\g^*,[\cdot,\cdot]_{\g^*},R^*)$ is a Reynolds Lie algebra.
\end{pro}

\begin{defi}
  A Reynolds Lie bialgebra $(\g,\Delta,R)$ is called {\bf coboundary} if $\Delta$ is a $1$-coboundary of the Lie algebra $\g$ associated to the representation $\ad\otimes \Id+\Id\otimes\ad$ on the tensor space $\g\otimes \g$, that is, $\Delta$ is defined by \eqref{r-coboundary}, for some $r\in \g\otimes\g.$
\end{defi}

\begin{pro}\label{r-matrix-condition}
  Let $(\g,[\cdot,\cdot]_{\g},R)$ be a Reynolds Lie algebra and $r\in \g\otimes\g.$
  Suppose that the linear map $\Delta:\g\rightarrow\g\otimes\g$ defined by \eqref{r-coboundary}
  induces a Lie algebra structure on $\g^*$ such that $(\g,\g^*)$ is a Lie bialgebra.
   Then $(\g^*,[\cdot,\cdot]_{\g^*},{-R}^*)$ is a Reynolds Lie algebra if and only if the following equation holds:
  \begin{eqnarray} \label{Rey-r-matrix-1}
\nonumber &&\Big(\ad_{Rx}\otimes\Id+\Id\otimes \ad_{Rx}+R\circ\ad_{Rx}\otimes \Id+\Id\otimes R\circ \ad_{Rx}
 \\&&-R\circ \ad_x\otimes \Id-\Id\otimes R\circ\ad_x\Big)
 (R\otimes\Id+\Id\otimes R)(r)=0
  \end{eqnarray}
\end{pro}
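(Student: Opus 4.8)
The plan is to recast the condition ``$-R^*$ is a Reynolds operator on $\g^*$'' as an identity in $\g\otimes\g$ by dualizing against $\g$, then to substitute the coboundary formula \eqref{r-coboundary} for $\Delta$, and finally to reconcile the result with \eqref{Rey-r-matrix-1} by invoking the Reynolds identity \eqref{Reynolds-operator} for $R$ on $\g$ itself. The hypothesis that $(\g,\g^*)$ is a Lie bialgebra is used only to guarantee that $[\cdot,\cdot]_{\g^*}$ is a genuine Lie bracket, so that the notion of a Reynolds operator on $\g^*$ makes sense; beyond that, the Jacobi and cocycle structures play no role in the equivalence.

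First I would write out the defining condition: $(\g^*,[\cdot,\cdot]_{\g^*},-R^*)$ is a Reynolds Lie algebra exactly when, reading \eqref{Reynolds-operator} with $R$ replaced by $-R^*$,
\begin{equation*}
[-R^*\xi,-R^*\eta]_{\g^*}=-R^*\big([-R^*\xi,\eta]_{\g^*}+[\xi,-R^*\eta]_{\g^*}-[-R^*\xi,-R^*\eta]_{\g^*}\big)
\end{equation*}
for all $\xi,\eta\in\g^*$. Pairing with an arbitrary $x\in\g$ and using both the defining duality $\langle[\xi,\eta]_{\g^*},z\rangle=\langle\Delta(z),\xi\otimes\eta\rangle$ (valid up to the usual coantisymmetrization convention, whose overall factor is irrelevant to the vanishing) and the adjunction $\langle -R^*\gamma,z\rangle=-\langle\gamma,Rz\rangle$, each term becomes the pairing of $\xi\otimes\eta$ with an element of $\g\otimes\g$ built from $\Delta(x)$ and $\Delta(Rx)$. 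Since the pairing is nondegenerate and $\xi,\eta$ are free, the Reynolds condition is equivalent to the identity
\begin{equation*}
(R\otimes R)\Delta(x)-(R\otimes\Id)\Delta(Rx)-(\Id\otimes R)\Delta(Rx)-(R\otimes R)\Delta(Rx)=0,
\end{equation*}
which I call $(\star)$, holding for every $x\in\g$. The sign bookkeeping here must be watched: the four copies of $-R^*$ conspire so that in the first term the factors $R^*\otimes R^*$ transpose to $R\otimes R$ acting on $\Delta(x)$.

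Next I would substitute $\Delta(x)=(\ad_x\otimes\Id+\Id\otimes\ad_x)(r)$ and $\Delta(Rx)=(\ad_{Rx}\otimes\Id+\Id\otimes\ad_{Rx})(r)$, so that $(\star)$ becomes a sum of eight tensor-product operators applied to $r$. In parallel I would expand the left-hand side of \eqref{Rey-r-matrix-1} as $MN(r)$, where $N=R\otimes\Id+\Id\otimes R$ and $M$ is the six-term operator displayed in the statement; this yields twelve terms, several of which involve the composites $\ad_{Rx}R$, $R\ad_x R$ and $R\ad_{Rx}R$ that are absent from $(\star)$.

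The crux, and the step I expect to be the main obstacle, is matching these two expansions. The bridge is the Reynolds identity \eqref{Reynolds-operator}, read as an equality of operators $\g\to\g$:
\begin{equation*}
\ad_{Rx}\circ R=R\circ\ad_{Rx}+R\circ\ad_x\circ R-R\circ\ad_{Rx}\circ R.
\end{equation*}
Applying this to the two terms $\ad_{Rx}R\otimes\Id$ and $\Id\otimes\ad_{Rx}R$ occurring in $MN(r)$ replaces each by three terms; the composites $R\ad_x R$ and $R\ad_{Rx}R$ then cancel in pairs against terms already present, reducing $MN(r)$ to exactly eight surviving terms. A disciplined term-by-term comparison shows that these eight terms are precisely the negatives of the eight terms of the expanded $(\star)$, so that $MN(r)=-(\star)$ as elements of $\g\otimes\g$ for each $x$. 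Consequently $(\star)=0$ for all $x$ if and only if \eqref{Rey-r-matrix-1} holds for all $x$, which is the desired equivalence. The only genuine labour is this bookkeeping in the last step; the rest is formal dualization and substitution.
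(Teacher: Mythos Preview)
Your proposal is correct and follows essentially the same route as the paper. The paper first observes that $-R^*$ is a Reynolds operator on $\g^*$ iff $(\g,\Delta,-R)$ is a Reynolds Lie coalgebra, which is exactly your identity $(\star)$, and then asserts the equivalence with \eqref{Rey-r-matrix-1} ``by direct calculations''; you in fact supply those calculations, using the operator form of the Reynolds identity $\ad_{Rx}\circ R=R\circ\ad_{Rx}+R\circ\ad_x\circ R-R\circ\ad_{Rx}\circ R$ to effect the cancellations, and your conclusion $MN(r)=-(\star)$ is correct.
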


\begin{proof} By Proposition \ref{prop52},
$(\g^*,[\cdot,\cdot]_{\g^*},{-R}^*)$ is a Reynolds Lie algebra if and only if $(\g,\Delta,-R)$ is a Reynolds Lie coalgebra.
By \eqref{Lie-coalgebra-Reynolds}, for all $x\in \g,$ we
 \begin{eqnarray*}
&&(R\otimes\Id)\Delta (Rx)+(\Id\otimes R)\Delta (Rx)+(R\otimes R)\Delta (Rx)-(R\otimes R)\Delta(x)\\
&=&(\uline{R\circ\ad_{Rx}\otimes\Id}+R\otimes\ad_{Rx}
+\ad_{Rx}\otimes R+\uuline{\Id\otimes R\circ\ad_{Rx}}\\
&&+R\circ\ad_{Rx}\otimes R+R\otimes R\circ\ad_{Rx}-R\circ\ad_x\otimes R-R\otimes R\circ\ad_x)(r)\\
&=&(\uline{\ad_{Rx}\circ R\otimes\Id-R\circ \ad_x\circ R\otimes \Id+R\circ\ad_{Rx}\circ R\otimes \Id}+R\otimes \ad_{Rx}\\
&&+\ad_{Rx}\otimes R+\uuline{\Id\otimes\ad_{Rx}\circ R-\Id\otimes R\circ \ad_{x}\circ R+\Id\otimes R\circ\ad_{Rx}\circ R}\\
&&+R\circ \ad_{Rx}\otimes R+R\otimes R\circ\ad_{Rx}-R\circ \ad_x\otimes R-R\otimes R\circ\ad_x)(r)\\
&=&(\ad_{Rx}\otimes \Id)(R\otimes \Id)(r)-(R\circ\ad_x\otimes \Id)(R\otimes \Id)(r)+(R\circ\ad_{Rx}\otimes\Id)(R\otimes\Id)(r)\\
&&+(\Id\otimes \ad_{Rx})(R\otimes \Id)(r)+(\ad_{Rx}\otimes \Id)(\Id\otimes R)(r)+(\Id\otimes \ad_{Rx})(\Id\otimes R)(r)\\
&&-(\Id\otimes R\circ \ad_x)(\Id\otimes R)(r)+(\Id\otimes R\circ\ad_{Rx})(\Id\otimes R)(r)+(R\circ \ad_{Rx}\otimes \Id)(\Id\otimes R)(r)\\
&&+(\Id\otimes R\circ\ad_{Rx})(R\otimes \Id)(r)-(R\circ\ad_x\otimes\Id)(\Id\otimes R)(r)-(\Id\otimes R\circ\ad_x)(R\otimes\Id)(r)\\
&=&(\ad_{Rx}\otimes\Id+\Id\otimes \ad_{Rx}+R\circ\ad_{Rx}\otimes \Id+\Id\otimes R\circ \ad_{Rx}\\
&&-R\circ \ad_x\otimes \Id-\Id\otimes R\circ\ad_x)(R\otimes\Id+\Id\otimes R)(r)\\
&=&0,
 \end{eqnarray*}
 which means $(\g^*,[\cdot,\cdot]_{\g^*},{-R}^*)$ is a Reynolds Lie algebra if and only if \eqref{Rey-r-matrix-1} holds.
\end{proof}
By Theorem \ref{r-bialgebra} and Proposition \ref{r-matrix-condition}, we have the following conclusion.
\begin{thm}\label{Rey-bialgebra-cybe}
Let $(\g,[\cdot,\cdot]_{\g},R)$ be a Reynolds Lie algebra and $r\in \g\otimes\g$.
Then the linear map $\Delta:\g\rightarrow\g\otimes\g$ defined by \eqref{r-coboundary} induces a Lie algebra structure on $\g^*$ such that $(\g,\Delta,R)$ is a Reynolds Lie bialgebra
if the following equations hold:
\begin{eqnarray}
(\ad_x\otimes \Id+\Id\otimes\ad_x\big)(r+\sigma(r))&=&0,\\
\label{LIE-Yang-Baxter}[[r,r]]&=&0,\\
\label{Rey-LIE-Yang-Baxter-2}(R\otimes\Id+\Id\otimes R)(r)&=&0.
\end{eqnarray}
where $[[r,r]]$ is defined by \eqref{r-matrix}.
\end{thm}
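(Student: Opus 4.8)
The plan is to deduce the statement directly from Theorem \ref{r-bialgebra} together with Proposition \ref{r-matrix-condition}, which between them already isolate exactly the two facts that need verifying: that $\Delta$ makes $(\g,\g^*)$ into a Lie bialgebra, and that $-R^*$ is then a Reynolds operator on $\g^*$. Since these two results carry all the genuine content, the role of the proof is mainly to check that each of the three displayed hypotheses feeds into the appropriate slot.

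First I would match the first two hypotheses against Theorem \ref{r-bialgebra}. The $\ad$-invariance condition on $r+\sigma(r)$ is word-for-word the first requirement of that theorem. The classical Yang-Baxter equation \eqref{LIE-Yang-Baxter}, namely $[[r,r]]=0$, is strictly stronger than the second requirement of Theorem \ref{r-bialgebra}: applying the operator $\ad_x\otimes\Id\otimes\Id+\Id\otimes\ad_x\otimes\Id+\Id\otimes\Id\otimes\ad_x$ to the zero tensor $[[r,r]]$ yields $0$, so that requirement holds automatically. Hence Theorem \ref{r-bialgebra} guarantees that the map $\Delta$ defined by \eqref{r-coboundary} induces a Lie algebra structure on $\g^*$ for which $(\g,\g^*)$ is a Lie bialgebra; this secures the underlying bialgebra and, since $R$ is a Reynolds operator on $\g$ by the standing hypothesis that $(\g,[\cdot,\cdot]_\g,R)$ is a Reynolds Lie algebra, condition (i) in the definition of a Reynolds operator on $(\g,\g^*)$ holds at once.

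For condition (ii) I would invoke Proposition \ref{r-matrix-condition}, which says that $-R^*$ is a Reynolds operator on $\g^*$ if and only if \eqref{Rey-r-matrix-1} is satisfied. The decisive observation is that the operator in \eqref{Rey-r-matrix-1} is applied to the single tensor $(R\otimes\Id+\Id\otimes R)(r)$, and the third hypothesis \eqref{Rey-LIE-Yang-Baxter-2} asserts precisely that this tensor vanishes; thus \eqref{Rey-r-matrix-1} holds trivially and $-R^*$ is indeed a Reynolds operator on $\g^*$. Combining this with the previous paragraph gives both conditions of the definition, so $(\g,\Delta,R)$ is a Reynolds Lie bialgebra. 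I do not anticipate any real obstacle: the only point requiring care is to note that \eqref{Rey-LIE-Yang-Baxter-2} is a vanishing condition that is strictly stronger than what Proposition \ref{r-matrix-condition} requires, which is why the theorem is correctly stated as a one-way implication rather than an equivalence.
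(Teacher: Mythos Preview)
Your proposal is correct and matches the paper's approach exactly: the paper simply states that the theorem follows from Theorem~\ref{r-bialgebra} and Proposition~\ref{r-matrix-condition}, and your argument spells out precisely how the three hypotheses plug into those two results. Your observation that \eqref{LIE-Yang-Baxter} and \eqref{Rey-LIE-Yang-Baxter-2} are strictly stronger than what is needed (hence the one-way implication) is a helpful clarification the paper leaves implicit.
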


\begin{defi}
The equation \eqref{LIE-Yang-Baxter} together with conditions \eqref{Rey-LIE-Yang-Baxter-2} is called the {\bf classical Yang-Baxter equation} in the Reynolds Lie algebra $(\g,[\cdot,\cdot]_{\g},R)$.
\end{defi}

\begin{rmk}
    Note that if $(\g,S)$ is a quadratic Lie algebra and $r\in \g\otimes \g$, then the condition \eqref{Rey-LIE-Yang-Baxter-2} is equivalent to the condition $R\circ B =-B\circ R^*$, where $B$ is the corresponding to $r$ map defined by \eqref{sharp-condition}.
\end{rmk}

\subsection{Relative Rota-Baxter operators on Reynolds Lie algebras}

We introduce the notion of a relative Rota-Baxter operator on a Reynolds Lie algebra.

\begin{defi}
Let $(W;T,\rho)$ be a representation of a Reynolds Lie algebra $(\g,[\cdot,\cdot]_{\g},R).$ A linear map $K:W\rightarrow\g$ is called a {\bf relative Rota-Baxter operator} on $(\g,[\cdot,\cdot]_{\g},R)$ with
respect to the representation $(W;T,\rho)$ if the following equation holds:
\begin{eqnarray}
  \label{relative-RBO}[Ku,Kv]_{\g}&=&K(\rho(Ku)v-\rho(Kv)u),\quad \forall u,v\in W,\\
  \label{compatible-condition}R\circ K&=&K\circ T.
\end{eqnarray}
Moreover, a relative Rota-Baxter operator on a Reynolds Lie algebra with
respect to the adjoint representation $(\g;R,\ad)$ is called a {\bf Rota-Baxter operator of weight $0$.}
\end{defi}
\begin{rmk}
  Note that a linear map $K:W\rightarrow\g$ satisfying \eqref{relative-RBO} is called a relative
   Rota-Baxter operator on $(\g,[\cdot,\cdot]_{\g})$ with
respect to the representation $(W;\rho)$ \rm{(\cite{Ku})}.
\end{rmk}

\begin{pro}\label{descendent-AveLie}
  Let $K:W\rightarrow\g$ be a relative Rota-Baxter operator on a Reynolds Lie algebra $(\g,[\cdot,\cdot]_{\g},R)$  with
respect to the representation $(W;T,\rho).$ Define
\begin{eqnarray}\label{relative-Rotabaxter-braket}
 [u,v]_{K}=\rho(Ku)v-\rho(Kv)u,\quad \forall u,v\in W.
\end{eqnarray}
 Then $(W,[\cdot,\cdot]_{K},T)$ is a Reynolds Lie algebra, called the {\bf descendent Reynolds Lie algebra.}
\end{pro}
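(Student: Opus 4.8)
The plan is to split the verification into two independent parts: first that $(W,[\cdot,\cdot]_{K})$ is a Lie algebra, and then that $T$ is a Reynolds operator for this bracket. For the Lie algebra part I would observe that $[u,v]_{K}=\rho(Ku)v-\rho(Kv)u$ is manifestly skew-symmetric, and that a linear map satisfying only \eqref{relative-RBO} is exactly a relative Rota-Baxter operator on $(\g,[\cdot,\cdot]_{\g})$ in the classical sense (the Remark following the definition). Such an operator already induces a Lie bracket on $W$; concretely, \eqref{relative-RBO} says that $K[u,v]_{K}=[Ku,Kv]_{\g}$, so $K$ is a Lie algebra homomorphism, and the Jacobi identity for $[\cdot,\cdot]_{K}$ follows by applying $K$, using that $\rho$ is a representation together with the Jacobi identity of $\g$. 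This part thus requires no new computation.

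The core of the argument is the Reynolds identity for $T$, namely
\[
[Tu,Tv]_{K}=T\big([Tu,v]_{K}+[u,Tv]_{K}-[Tu,Tv]_{K}\big),\qquad \forall u,v\in W.
\]
The first key step is to use the compatibility \eqref{compatible-condition}, i.e. $K\circ T=R\circ K$, to replace $K(Tu)$ by $R(Ku)$ and $K(Tv)$ by $R(Kv)$ throughout, so that
\[
[Tu,Tv]_{K}=\rho(R(Ku))(Tv)-\rho(R(Kv))(Tu).
\]
The decisive move is then to feed each of these two terms through the representation's Reynolds compatibility \eqref{representation-Reynolds-Liealg}, applied once with $x=Ku$, $u=v$, and once with $x=Kv$, $u=u$:
\[
\rho(R(Ku))(Tv)=T\big(\rho(Ku)(Tv)+\rho(R(Ku))v-\rho(R(Ku))(Tv)\big),
\]
and symmetrically for the second term. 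Substituting, the left-hand side is exhibited as $T$ applied to an explicit sum of six terms.

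To finish I would expand the right-hand side $[Tu,v]_{K}+[u,Tv]_{K}-[Tu,Tv]_{K}$ directly from the definition of $[\cdot,\cdot]_{K}$, again invoking $K\circ T=R\circ K$ to rewrite every $K(T-)$ as $R(K-)$, and then check that the resulting expression coincides term-by-term with the six-term argument of $T$ produced above, which yields the claimed identity. There is no genuine obstacle here; the proof is essentially bookkeeping. The only point demanding care is selecting the substitution in \eqref{representation-Reynolds-Liealg} so that the two sides line up exactly, and this matching is automatic once $K(Tu)=R(Ku)$ is applied consistently on both sides.
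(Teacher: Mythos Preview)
Your proposal is correct and follows essentially the same approach as the paper: cite the classical fact that a relative Rota-Baxter operator induces a Lie bracket $[\cdot,\cdot]_K$ on $W$, then verify the Reynolds identity for $T$ by rewriting $K(Tu)=R(Ku)$ via \eqref{compatible-condition} and applying the representation compatibility \eqref{representation-Reynolds-Liealg} to each of the two summands of $[Tu,Tv]_K$. One small caution: your parenthetical sketch of the Jacobi identity (``follows by applying $K$'') is not literally a proof when $K$ is not injective, but since you (like the paper) are invoking the known result rather than reproving it, this does not affect the argument.
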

\begin{proof}
  It was proved in \rm{(\cite{Ku})} that if $K:W\rightarrow\g$ is a relative Rota-Baxter operator on a Lie algebra
$\g$ with respect to  a representation $(W;\rho),$ then $(W,[\cdot,\cdot]_{K})$ is a Lie algebra. By \eqref{representation-Reynolds-Liealg} and \eqref{compatible-condition}, for all $u,v\in W,$ we have
\begin{eqnarray*}
[Tu,Tv]_{K}&=&\rho(K(Tu))(Tv)-\rho(K(Tv))(Tu)\\
&=&\rho(R(Ku))(Tv)-\rho(R(Kv))(Tu)\\
&=&T(\rho(Ku)(Tv))+T(\rho(R(Ku)v))-T(\rho(R(Ku))(Tv))\\
&&-T(\rho(Kv)(Tu))-T(\rho(R(Kv)u))+T(\rho(R(Kv))(Tu))\\
&=&T([Tu,v]_{K}+[u,Tv]_{K}-[Tu,Tv]_{K}),
\end{eqnarray*}
which implies that $T:W\rightarrow W$ is a Reynolds operator on the Lie algebra $(W,[\cdot,\cdot]_{K}).$
\end{proof}

\begin{cor}
 Let $K:W\rightarrow\g$ be a relative Rota-Baxter operator on a Reynolds Lie algebra $(\g,[\cdot,\cdot]_{\g},R)$ with
respect to the representation $(W;T,\rho).$ Then $K$ is a homomorphism from the descendent Reynolds Lie algebra $(W,[\cdot,\cdot]_{K},T)$ to the Reynolds Lie algebra $(\g,[\cdot,\cdot]_{\g},R).$
\end{cor}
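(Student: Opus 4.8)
The plan is to unwind the definition of a homomorphism of Reynolds Lie algebras and to check its two constituent conditions directly against the two axioms defining a relative Rota-Baxter operator. Recall that a homomorphism from $(W,[\cdot,\cdot]_{K},T)$ to $(\g,[\cdot,\cdot]_{\g},R)$ is, by definition, a Lie algebra homomorphism $K$ that intertwines the two Reynolds operators, i.e. satisfies $K\circ T=R\circ K$. By Proposition \ref{descendent-AveLie}, the source $(W,[\cdot,\cdot]_{K},T)$ is genuinely a Reynolds Lie algebra, so the assertion is well-posed and only these two properties remain to be verified.

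First I would verify that $K$ is a homomorphism of the underlying Lie algebras. Expanding the descendent bracket via \eqref{relative-Rotabaxter-braket} and then applying the relative Rota-Baxter identity \eqref{relative-RBO}, one finds for all $u,v\in W$ that
\begin{eqnarray*}
K[u,v]_{K}=K\big(\rho(Ku)v-\rho(Kv)u\big)=[Ku,Kv]_{\g},
\end{eqnarray*}
so $K$ preserves the brackets. This step is immediate; indeed, the bracket $[\cdot,\cdot]_{K}$ was essentially engineered so that \eqref{relative-RBO} becomes exactly the statement that $K$ is multiplicative.

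Second I would check compatibility with the Reynolds operators. This is nothing beyond the second defining condition \eqref{compatible-condition}, namely $R\circ K=K\circ T$, which is precisely the intertwining relation required of a homomorphism of Reynolds Lie algebras. With both properties in hand, $K$ is such a homomorphism and the corollary follows. I expect no genuine obstacle here: the result is an immediate repackaging of the two axioms \eqref{relative-RBO} and \eqref{compatible-condition}, together with the descendent structure furnished by Proposition \ref{descendent-AveLie}. The only point requiring a little care is to invoke Proposition \ref{descendent-AveLie} explicitly, so as to guarantee that the domain is a bona fide Reynolds Lie algebra and not merely a Lie algebra equipped with an operator.
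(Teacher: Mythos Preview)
Your proposal is correct and matches the paper's approach: the paper states this corollary without proof, treating it as an immediate consequence of the definitions, and your argument spells out precisely the two checks (the Lie-algebra homomorphism property via \eqref{relative-RBO} and the intertwining condition \eqref{compatible-condition}) that make it so.
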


\begin{pro}\label{construction-mathched-pair}
Let $K:W\rightarrow\g$ be a relative Rota-Baxter operator on a Reynolds Lie algebra $(\g,[\cdot,\cdot]_{\g},R)$ with
respect to the representation $(W;T,\rho).$ Then $((\g,R),(W,T);\rho,\mu)$
is a matched pair of Reynolds Lie algebras, where $(W,T)$ is the descendent Reynolds Lie algebra, the Lie bracket $[\cdot,\cdot]_{K}$ is defined by \eqref{relative-Rotabaxter-braket} and $\mu:W\rightarrow\gl(\g)$ is given by
\begin{eqnarray*}
\mu(u)x=K(\rho(x)u)-[x,Ku]_{\g},\quad \forall x\in \g,u\in W.
\end{eqnarray*}
\end{pro}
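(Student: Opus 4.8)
The plan is to reduce the statement to the alternative characterization of matched pairs of Reynolds Lie algebras recorded just after Definition \ref{MPR}. Taking $\h=W$ and $\huaR=T$ there, the datum $((\g,R),(W,T);\rho,\mu)$ is a matched pair of Reynolds Lie algebras if and only if three conditions hold: that $(\g,W;\rho,\mu)$ is a matched pair of ordinary Lie algebras, that \eqref{Rey-1} holds, and that \eqref{Rey-2} holds. So I would first note that $(W,[\cdot,\cdot]_{K},T)$ is genuinely a Reynolds Lie algebra by Proposition \ref{descendent-AveLie}, and then verify these three conditions in turn.

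For the first condition, the underlying matched pair of Lie algebras $(\g,W;\rho,\mu)$ is exactly the classical structure attached to a relative Rota-Baxter operator (cf. \cite{Ku,Bai}): $\rho$ is the given representation of $\g$ on the vector space $W$, the second Lie algebra is the descendent Lie algebra $(W,[\cdot,\cdot]_{K})$, and one checks directly that $\mu$ is a representation of $(W,[\cdot,\cdot]_{K})$ on $\g$ and that the compatibilities \eqref{eq:mp1} and \eqref{eq:mp2} hold; I would either cite this or record the short verification. The second condition \eqref{Rey-1}, written out with $\huaR=T$, reads $\rho(Rx)(Tu)=T\big(\rho(x)(Tu)+\rho(Rx)u-\rho(Rx)(Tu)\big)$, which is precisely the defining identity \eqref{representation-Reynolds-Liealg} of the representation $(W;T,\rho)$ and hence needs nothing new.

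The substance of the proof is the third condition \eqref{Rey-2}, namely $\mu(Tu)(Rx)=R\big(\mu(u)(Rx)+\mu(Tu)(x)-\mu(Tu)(Rx)\big)$. Here I would expand every occurrence of $\mu$ via $\mu(u)x=K(\rho(x)u)-[x,Ku]_{\g}$ and at once rewrite each composite $R\circ K$ as $K\circ T$ using the compatibility \eqref{compatible-condition}; in particular $K(Tu)=R(Ku)$. Each resulting term then splits into a $K$-part and a bracket-part. The three $K$-parts assemble into $K\big(T(\rho(x)(Tu)+\rho(Rx)u-\rho(Rx)(Tu))\big)$, which collapses to $K(\rho(Rx)(Tu))$ by the representation identity \eqref{representation-Reynolds-Liealg}; the three bracket-parts, after applying $R$, assemble into $-\big(R[Rx,Ku]_{\g}+R[x,R(Ku)]_{\g}-R[Rx,R(Ku)]_{\g}\big)$, which collapses to $-[Rx,R(Ku)]_{\g}$ by the Reynolds identity \eqref{Reynolds-operator} applied to the pair $x,\,Ku$. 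Comparing with the left-hand side $\mu(Tu)(Rx)=K(\rho(Rx)(Tu))-[Rx,R(Ku)]_{\g}$ then closes the argument.

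I expect the bookkeeping of this last step to be the only real obstacle. The cancellation is not term-by-term: it succeeds only because the representation condition and the Reynolds condition are each hidden inside a specific combination of three summands, so the care lies entirely in grouping the six terms correctly before invoking \eqref{representation-Reynolds-Liealg} and \eqref{Reynolds-operator}.
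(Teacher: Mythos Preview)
Your proposal is correct and follows essentially the same route as the paper's proof: both invoke the alternative characterization after Definition \ref{MPR}, cite the classical matched pair of Lie algebras attached to a relative Rota-Baxter operator, observe that \eqref{Rey-1} is literally \eqref{representation-Reynolds-Liealg}, and reduce the substance to verifying \eqref{Rey-2} via the same two identities \eqref{representation-Reynolds-Liealg} and \eqref{Reynolds-operator} together with $K\circ T=R\circ K$. The only cosmetic difference is that the paper starts from the left-hand side $\mu(Tu)(Rx)$ and expands it into the six-term right-hand side, whereas you start from the right-hand side and collapse it to the left; the grouping into $K$-parts and bracket-parts is identical in both directions.
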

\begin{proof}
By \eqref{relative-RBO} and \eqref{compatible-condition}, for all $x\in \g, u,v\in W,$ we have $[\mu(u),\mu(v)]x=\mu([u,v]_{K})x,$ and
\begin{eqnarray*}
\mu(Tu)(Rx)&=&K(\rho(Rx)(Tu))-[Rx,K(Tu)]_{\g}\\
&=&K(T(\rho(x))(Tu))+K(T(\rho(Rx)u))-K(T(\rho(Rx)(Tu)))\\
&&-R[Rx,Ku]_{\g}-R[x,R(Ku)]_{\g}+R[Rx,R(Ku)]_{\g}\\
&=&R(\mu(u)(Rx)+\mu(Tu)(x)-\mu(Tu)(Rx))
\end{eqnarray*}
which implies that $(\g;R,\mu)$  is a representation of the descendent Reynolds Lie algebra $(W,[\cdot,\cdot]_{K},T)$.  It is also known that $((\g,[\cdot,\cdot]_{\g}),(W,[\cdot,\cdot]_{K}),\rho,\mu)$ is a matched pair of Lie algebras.
Therefore, $((\g,R),(W,T);\rho,\mu)$
is a matched pair of Reynolds Lie algebras.
\end{proof}

We have the following relationship between relative Rota-Baxter operators on Reynolds Lie algebras and solutions of the classical Yang-Baxter equation in Reynolds Lie algebras.
\begin{pro}\label{pro:rrr}
Let $(\g,[\cdot,\cdot]_{\g},R)$ be a Reynolds Lie algebra. Suppose that $r\in \g\otimes\g$ is skew-symmetric. Then $r_+:\g^*\rightarrow\g$ is a relative Rota-Baxter operator on $(\g,[\cdot,\cdot]_{\g},R)$
with respect to the coadjoint representation $(\g^*;-R^*,\ad^*)$ if and only if $r$ is a solution of the
classical Yang-Baxter equation in the Reynolds Lie algebra $(\g,[\cdot,\cdot]_{\g},R).$
\end{pro}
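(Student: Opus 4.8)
The plan is to separate the two defining conditions of a relative Rota-Baxter operator and match them one at a time against the two ingredients of the classical Yang-Baxter equation in $(\g,[\cdot,\cdot]_\g,R)$. Recall from Corollary~\ref{thm:coadjoint} that the coadjoint representation is $(\g^*;-R^*,\ad^*)$, so saying that $r_+$ is a relative Rota-Baxter operator with respect to it means exactly that two equations hold: the operator identity \eqref{relative-RBO}, which here reads $[r_+\xi,r_+\eta]_\g=r_+\big(\ad^*_{r_+\xi}\eta-\ad^*_{r_+\eta}\xi\big)$ for all $\xi,\eta\in\g^*$, together with the compatibility \eqref{compatible-condition}, which here becomes $R\circ r_+=r_+\circ(-R^*)=-r_+\circ R^*$.

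First I would dispose of the purely Lie-algebraic identity \eqref{relative-RBO}. Since $r$ is skew-symmetric, this is precisely the operator form of the classical Yang-Baxter equation on the underlying Lie algebra: by the theorem of Kupershmidt and Bai \cite{Ku,Bai}, for a skew-symmetric $r$ the map $r_+:\g^*\to\g$ satisfies \eqref{relative-RBO} with $\rho=\ad^*$ if and only if $[[r,r]]=0$, i.e. \eqref{LIE-Yang-Baxter} holds. Thus \eqref{relative-RBO} is equivalent to the first ingredient of the CYBE in the Reynolds Lie algebra, and no new computation is needed here; this is the step that carries all of the genuinely Lie-theoretic content, and it is imported as a citation.

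It then remains to show that the compatibility \eqref{compatible-condition} is equivalent to the Reynolds condition \eqref{Rey-LIE-Yang-Baxter-2}, namely $(R\otimes\Id+\Id\otimes R)(r)=0$. Writing $r=\sum_i x_i\otimes y_i$ and using the formula \eqref{sharp-condition} for $r_+$, I would evaluate both sides of $R\circ r_+=-r_+\circ R^*$ on an arbitrary $\xi\in\g^*$: the left side gives $\sum_i\langle\xi,x_i\rangle Ry_i$, which is the contraction of $(\Id\otimes R)(r)$ against $\xi$ in the first leg, while the right side gives $-\sum_i\langle\xi,Rx_i\rangle y_i$, minus the contraction of $(R\otimes\Id)(r)$ against $\xi$ in the first leg. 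Hence \eqref{compatible-condition} asserts that the first-leg contraction of $(R\otimes\Id+\Id\otimes R)(r)$ against every $\xi$ vanishes, and by nondegeneracy of the pairing this is equivalent to $(R\otimes\Id+\Id\otimes R)(r)=0$, which is the same bookkeeping already recorded in the Remark following Theorem~\ref{Rey-bialgebra-cybe}. Combining the two equivalences, $r_+$ is a relative Rota-Baxter operator on the Reynolds Lie algebra exactly when both \eqref{LIE-Yang-Baxter} and \eqref{Rey-LIE-Yang-Baxter-2} hold, i.e. when $r$ solves the classical Yang-Baxter equation in $(\g,[\cdot,\cdot]_\g,R)$. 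The only point that needs care is the sign coming from $T=-R^*$ in the coadjoint representation, which is precisely what makes the minus signs in \eqref{compatible-condition} and in the contraction of $(R\otimes\Id)(r)$ cancel to yield the symmetric tensor condition; beyond tracking that sign, the argument is routine.
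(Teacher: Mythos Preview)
Your proof is correct and follows essentially the same approach as the paper: both split the relative Rota-Baxter conditions into \eqref{relative-RBO} and \eqref{compatible-condition}, cite the classical Kupershmidt/Bai equivalence for the first, and identify the second with \eqref{Rey-LIE-Yang-Baxter-2} via the contraction formula for $r_+$. Your version is somewhat more explicit about the contraction computation and the sign tracking than the paper's terse ``By $R\circ r_+=r_+\circ(-R^*)$, we have $(R\otimes\Id)r=-(\Id\otimes R)r$,'' but the structure is identical.
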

\begin{proof}
 It is well known that  $r_+:\g^*\rightarrow\g$ is a relative Rota-Baxter operator on a Lie algebra
$\g$ with respect to the representation $(\g^*;\ad^*)$, i.e.
  \begin{equation*}
    [r_{+}(\xi),r_+(\eta)]_\g=r_+({\ad}^*_{r_+(\xi)}\eta-\ad^*_{r_+(\eta)}\xi), \quad \forall \xi, \eta\in \g^*,
\end{equation*}
if and only if $r$ is a solution of the classical Yang-Baxter equation in the Lie algebra $\g.$
By  $R\circ r_+=r_+\circ (-R^*),$  we have
 \begin{eqnarray}
   (R\otimes\Id)r=-(\Id\otimes R)r,
 \end{eqnarray}
which implies \eqref{Rey-LIE-Yang-Baxter-2} holds. Then $r_+:\g^*\rightarrow\g$ is a relative Rota-Baxter operator on $(\g,[\cdot,\cdot]_{\g},R)$ with respect to the coadjoint representation $(\g^*;-R^*,\ad^*)$ if and only if $r$ is a solution of the
classical Yang-Baxter equation in the Reynolds Lie algebra $(\g,[\cdot,\cdot]_{\g},R).$
This completes the proof.
\end{proof}

In a quadratic Reynolds Lie algebra, relative Rota-Baxter operators with respect to the coadjoint representation are equivalent to Rota-Baxter operators of weight 0.

\begin{pro}
Let $(\g,R,S)$ be a quadratic Reynolds Lie algebra and $K:\g^*\rightarrow\g$ a linear map. Then $K$ is a relative Rota-Baxter operator on $(\g,[\cdot,\cdot]_{\g},R)$ with respect to the coadjoint representation
$(\g^*;-R^*,\ad^*)$ if and only if $K\circ S^{\sharp}$ is a Rota-Baxter operator of weight $0$ on the Reynolds Lie algebra $(\g,[\cdot,\cdot]_{\g},R).$
\end{pro}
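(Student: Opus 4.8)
The plan is to transport both defining conditions of a relative Rota-Baxter operator across the isomorphism $S^\sharp:\g\to\g^*$ furnished by Theorem \ref{quadratic-Rep}, which by construction intertwines $\ad_x$ with $\ad_x^*$ and $R$ with $-R^*$; concretely I use $S^\sharp\circ\ad_x=\ad_x^*\circ S^\sharp$ together with \eqref{eq:SP-1}. Write $P:=K\circ S^\sharp:\g\to\g$. Because $S^\sharp$ is bijective, every covector is uniquely of the form $S^\sharp(x)$, so it suffices to test each identity on pairs $(\xi,\eta)=(S^\sharp x,S^\sharp y)$, and the equivalences will follow from the fact that this substitution ranges over all of $\g^*\times\g^*$.

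First I would handle the compatibility conditions. The requirement that $K$ be compatible with respect to $(\g^*;-R^*,\ad^*)$ reads $R\circ K=-K\circ R^*$. Composing on the right with $S^\sharp$ and using \eqref{eq:SP-1} in the form $S^\sharp\circ R=-R^*\circ S^\sharp$ turns the right-hand side into $K\circ S^\sharp\circ R$, so the condition is equivalent to $R\circ(K\circ S^\sharp)=(K\circ S^\sharp)\circ R$, i.e. $R\circ P=P\circ R$; invertibility of $S^\sharp$ gives the reverse implication. This is exactly the compatibility condition \eqref{compatible-condition} for $P$ as a relative Rota-Baxter operator with respect to the adjoint representation $(\g;R,\ad)$.

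Next I would handle the main operator identity. Setting $\xi=S^\sharp(x)$ and $\eta=S^\sharp(y)$ we have $K\xi=Px$ and $K\eta=Py$, so the left side of \eqref{relative-RBO} becomes $[Px,Py]_\g$. For the right side, the intertwining relation $\ad^*_z\circ S^\sharp=S^\sharp\circ\ad_z$ yields $\ad^*_{Px}(S^\sharp y)=S^\sharp([Px,y]_\g)$ and $\ad^*_{Py}(S^\sharp x)=S^\sharp([Py,x]_\g)=-S^\sharp([x,Py]_\g)$, whence
\[
K\big(\ad^*_{K\xi}\eta-\ad^*_{K\eta}\xi\big)=K\,S^\sharp\big([Px,y]_\g+[x,Py]_\g\big)=P\big([Px,y]_\g+[x,Py]_\g\big).
\]
Thus \eqref{relative-RBO} for $K$, evaluated at $(S^\sharp x,S^\sharp y)$, is termwise identical to the weight-$0$ Rota-Baxter identity $[Px,Py]_\g=P([Px,y]_\g+[x,Py]_\g)$ for $P$, and the two hold simultaneously as $(x,y)$ ranges over $\g\times\g$.

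Combining the two equivalences proves the claim. I do not expect a genuine obstacle here: the argument is purely a change of variables along the bijection $S^\sharp$, and the only delicate points are the signs, namely the $-R^*$ appearing in \eqref{eq:SP-1} and the skew-symmetry step $\ad^*_{Py}(S^\sharp x)=-S^\sharp([x,Py]_\g)$ that aligns the second term of the Rota-Baxter identity.
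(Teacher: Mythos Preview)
Your proposal is correct and follows essentially the same approach as the paper: both transport the relative Rota-Baxter identity and the compatibility condition along the isomorphism $S^\sharp$, invoking the intertwining relations $S^\sharp\circ\ad_x=\ad_x^*\circ S^\sharp$ and \eqref{eq:SP-1} from Theorem~\ref{quadratic-Rep}. Your exposition is slightly more explicit about the sign bookkeeping, but the underlying argument is identical.
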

\begin{proof}
  For all $x,y \in \g,$ by Theorem \ref{quadratic-Rep}, we have
\begin{eqnarray*}
(K\circ S^{\sharp})([K\circ S^{\sharp}(x),y]_{\g}+[x,K\circ S^{\sharp}(y)]_{\g})
&=&(K\circ S^{\sharp})(\ad_{K\circ  S^{\sharp}(x)}y-\ad_{K\circ  S^{\sharp}(y)}x)\\
&=&K\circ\ad^*_{K\circ  S^{\sharp}(x)}S^{\sharp}(y)-K\circ\ad^*_{K\circ  S^{\sharp}(y)}S^{\sharp}(x),
\end{eqnarray*}
which implies that $K$ is a relative Rota-Baxter operator if and only if $K\circ S^\sharp$ is a Rota-Baxter operator of weight $0$ on the Lie algebra $\g$.

On the other hand, by \eqref{eq:SP-1},
 $R\circ(K\circ S^{\sharp})=(K\circ S^{\sharp}) \circ R$ if and only if $(R\circ K)\circ S^{\sharp}=-(K\circ R^*)\circ S^{\sharp},$ which is equivalent to $R\circ K=K\circ (-R^*)$ due to the fact that the bilinear form $S$ is nondegenerate.
 Thus, $K$ is a relative Rota-Baxter operator if and only if $K\circ S^{\sharp}$ is a Rota-Baxter operator of weight $0$ on the Reynolds Lie algebra $(\g,[\cdot,\cdot]_{\g},R).$
\end{proof}

\begin{cor}
Let $(\g,R,S)$ be a quadratic Reynolds Lie algebra.  Suppose that $r\in \g\otimes\g$ is skew-symmetric. Then $r_+\circ S^{\sharp}$ is a Rota-Baxter operator of weight $0$
on the Reynolds Lie algebra $(\g,[\cdot,\cdot]_{\g},R)$ if and only if $r$ is a solution of the
classical Yang-Baxter equation in the Reynolds Lie algebra $(\g,[\cdot,\cdot]_{\g},R).$
\end{cor}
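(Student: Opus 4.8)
The plan is to obtain this corollary as a direct consequence of the two immediately preceding results by specializing the general linear map $K$ to the particular map $K=r_+$. First I would note that since $r\in\g\otimes\g$ is skew-symmetric, the induced map $r_+:\g^*\to\g$ is a well-defined linear map satisfying the skew-symmetry condition \eqref{eq:skew-symmetry}, so $r_+$ is a legitimate instance of the map $K:\g^*\to\g$ appearing in the preceding proposition. Moreover, the quadratic data $(\g,R,S)$ supplies the isomorphism $S^\sharp:\g\to\g^*$ of Theorem \ref{quadratic-Rep}, so that the composite $r_+\circ S^\sharp:\g\to\g$ is exactly the operator $K\circ S^\sharp$ with $K=r_+$.

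Next I would chain the two equivalences. By the proposition immediately preceding this corollary (applied with $K=r_+$), the operator $r_+\circ S^\sharp$ is a Rota-Baxter operator of weight $0$ on the Reynolds Lie algebra $(\g,[\cdot,\cdot]_\g,R)$ if and only if $r_+$ is a relative Rota-Baxter operator on $(\g,[\cdot,\cdot]_\g,R)$ with respect to the coadjoint representation $(\g^*;-R^*,\ad^*)$. By Proposition \ref{pro:rrr}, this latter condition holds if and only if $r$ is a solution of the classical Yang-Baxter equation in the Reynolds Lie algebra $(\g,[\cdot,\cdot]_\g,R)$. Composing these two biconditionals yields precisely the asserted equivalence between ``$r_+\circ S^\sharp$ is a Rota-Baxter operator of weight $0$'' and ``$r$ is a solution of the classical Yang-Baxter equation in $(\g,[\cdot,\cdot]_\g,R)$,'' which completes the argument.

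Since the proof is a formal composition of two already-established equivalences, I expect essentially no computational obstacle; the substantive content was discharged in the two preceding propositions. The only point requiring a moment of care is to verify that the hypotheses of both inputs are met simultaneously for the \emph{same} map $r_+$: the preceding proposition needs the full quadratic Reynolds structure $(\g,R,S)$ (which is in force here), whereas Proposition \ref{pro:rrr} needs only that $r$ be skew-symmetric (which is assumed). Both conditions hold under the standing hypotheses, so the two equivalences apply to $r_+$ and the corollary follows immediately.
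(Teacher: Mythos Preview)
Your proposal is correct and matches the paper's approach: the paper gives no explicit proof for this corollary, treating it as an immediate consequence of the two preceding propositions, and your argument is precisely the composition of those two equivalences with $K=r_+$.
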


Recall that a relative Rota-Baxter operator on Lie algebras can give rise to a solution of the classical Yang-Baxter equation in the semi-direct product Lie algebra as follows.

\begin{lem}\label{lem:rt}{\rm (\cite{Bai})}
Let $(\g,[\cdot,\cdot]_{\g})$ be a Lie algebra and $(W;\rho)$ be a representation. Let $K: W\rightarrow\g$ be a linear map which is identified as an element in
$(\g\ltimes_{\rho^*} W^*)\otimes (\g\ltimes_{\rho^*} W^*)$
 $($through ${\rm Hom}(W,\g)\cong W^*\otimes \g\subseteq (\g\ltimes_{\rho^*} W^*)\otimes (\g\ltimes_{\rho^*} W^*)$$)$.
Then
\begin{equation}
r_K:=K-\sigma(K)
\end{equation}
is a skew-symmetric  solution of the
classical Yang-Baxter equation
in the Lie algebra $\g\ltimes_{\rho^*} W^*$ if and only if $K$
is a relative Rota-Baxter operator on $\g$ with respect to $(W;\rho)$.
\end{lem}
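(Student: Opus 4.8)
The plan is to turn the classical Yang-Baxter equation $[[r_K,r_K]]=0$ in the semidirect product Lie algebra $\frkh:=\g\ltimes_{\rho^*}W^*$ into the defining identity \eqref{relative-RBO} of a relative Rota-Baxter operator by a direct expansion. First I would fix dual bases $\{f_a\}$ of $W$ and $\{f^a\}$ of $W^*$; under $\Hom(W,\g)\cong W^*\ot\g$ the operator $K$ becomes $K=\sum_a f^a\ot Kf_a$, so that
\[
r_K=\sum_a\big(f^a\ot Kf_a-Kf_a\ot f^a\big)\in\frkh\ot\frkh,
\]
which is skew-symmetric by construction; hence the skew-symmetry requirement is automatic and the real content is the scalar equation $[[r_K,r_K]]=0$. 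The structural fact I would lean on throughout is that $W^*$ is an abelian ideal of $\frkh$: the bracket reads $[x,y]=[x,y]_\g$, $[x,\xi]=\rho^{*}(x)\xi$ and $[\xi,\eta]=0$ for $x,y\in\g$ and $\xi,\eta\in W^*$.

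Next I would expand $[[r_K,r_K]]=[r_{12},r_{13}]+[r_{13},r_{23}]+[r_{12},r_{23}]$. Since $W^*$ is abelian, every bracket whose two inputs both lie in $W^*$ vanishes, which eliminates the majority of terms. The surviving contributions fall into the mixed component $\g\ot W^*\ot W^*$ and its cyclic images: the pure $\g$-brackets produce $\sum_{a,b}[Kf_a,Kf_b]_\g\ot f^a\ot f^b$, while the brackets of a $\g$-leg against a $W^*$-leg produce $\rho^{*}$-terms such as $\sum_{a,b}Kf_a\ot f^b\ot\rho^{*}(Kf_b)f^a$. Pairing the two $W^*$-legs of each component against arbitrary $v,v'\in W$ and using the definition $\langle\rho^{*}(x)\xi,w\rangle=-\langle\xi,\rho(x)w\rangle$ of the dual representation turns every $\rho^{*}$-leg back into a $K\rho$-term; collecting, the vanishing of this component becomes exactly
\[
[Ku,Kv]_\g-K\big(\rho(Ku)v-\rho(Kv)u\big)=0,\qquad\forall\,u,v\in W,
\]
that is, \eqref{relative-RBO}. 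Tracing the equivalence in both directions then proves the lemma.

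The main obstacle I anticipate is organizational rather than conceptual: keeping the three tensor slots straight after the Leibniz expansion, and recognizing that the $\rho^{*}$-valued legs re-assemble into $K\rho$-terms only after contraction with $W$, together with the signs produced by the flip $\sigma$ and by the minus sign in $\rho^{*}$. To sidestep this bookkeeping I would, as an alternative, pass to the operator form used in Proposition \ref{pro:rrr}: for a skew-symmetric $r$ it is standard that $[[r,r]]=0$ holds iff $r_+:\frkh^*\to\frkh$ satisfies $[r_+\alpha,r_+\beta]_{\frkh}=r_+(\ad^*_{r_+\alpha}\beta-\ad^*_{r_+\beta}\alpha)$. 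Under the identification $\frkh^*\cong\g^*\oplus W$ one computes $(r_K)_+(\phi+w)=Kw-K^{*}\phi$, and unwinding the coadjoint action of the semidirect product in terms of $\rho$ and $\rho^{*}$ reduces this identity to \eqref{relative-RBO}; the only delicate point there is to check that the $W^*$-valued component of the operator identity holds automatically, so that no constraint beyond \eqref{relative-RBO} is imposed.
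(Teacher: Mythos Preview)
The paper does not give its own proof of this lemma; it is quoted from \cite{Bai} and stated without argument. Your proposal is a correct sketch of the standard proof: the direct expansion of $[[r_K,r_K]]$ using that $W^*$ is an abelian ideal of $\g\ltimes_{\rho^*}W^*$, followed by pairing the $W^*$-legs against elements of $W$, is exactly how the result is established in the literature, and your alternative route through the operator form (the analogue of Proposition~\ref{pro:rrr} on the semidirect product) is equally valid. One small caution on the second route: when you check that the $W^*$-component of the operator identity is automatic, you will in fact find a nontrivial identity there too, but it is the same identity \eqref{relative-RBO} paired against an element of $\g^*$, so no extra constraint appears.
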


Let $(\g,[\cdot,\cdot]_{\g},R)$ be a Reynolds Lie algebra and  $(W;T,\rho)$ be a representation. Then there is a semi-direct product Reynolds Lie algebra $(\g \ltimes_{\rho^*}W^*,R-T^*)$.
Any linear map $K:W\rightarrow\g$ can be viewed as an element $\bar{K}\in \otimes^2(\g\oplus W^*)$ via
\begin{eqnarray}
  \bar{K}(\xi+u,\eta+v)=\langle Ku,\eta\rangle,\quad\forall \xi+u,\eta+v\in \g^*\oplus W.
\end{eqnarray}

\begin{thm}\label{skew-solution-Reynolds}
  Let $K:W\rightarrow\g$ be a relative Rota-Baxter operator on a Reynolds Lie algebra $(\g,[\cdot,\cdot]_{\g},R)$ with respect to  a representation $(W;T,\rho)$. Then
$r_K=\bar{K}-\sigma(\bar{K})$ is a skew-symmetric solution of the classical Yang-Baxter equation in the semi-direct product Reynolds Lie algebra $(\g\ltimes_{\rho^*}W^*,R-T^*)$.
\end{thm}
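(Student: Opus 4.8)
The plan is to separate the two defining conditions of a solution of the classical Yang–Baxter equation in the Reynolds Lie algebra $(\g\ltimes_{\rho^*}W^*, R-T^*)$: the equation $[[r_K,r_K]]=0$ of \eqref{LIE-Yang-Baxter}, which lives purely in the underlying Lie algebra, and the Reynolds compatibility \eqref{Rey-LIE-Yang-Baxter-2} with respect to the ambient Reynolds operator $R-T^*$. Recall from Proposition \ref{dual-representation} that $-T^*$ is the Reynolds operator on the dual representation $W^*$, so that $R-T^*$ is indeed a Reynolds operator on the semi-direct product $\g\ltimes_{\rho^*}W^*$. The first condition is supplied by a known result, and essentially all the new content lies in verifying the second.

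For the first condition, I would invoke Lemma \ref{lem:rt}. Since $K$ satisfies the relative Rota–Baxter identity \eqref{relative-RBO}, it is in particular a relative Rota–Baxter operator on the Lie algebra $\g$ with respect to the representation $(W;\rho)$, forgetting the Reynolds data. Bai's lemma then shows that $r_K=\bar K-\sigma(\bar K)$ is a skew-symmetric solution of the classical Yang–Baxter equation in the Lie algebra $\g\ltimes_{\rho^*}W^*$; that is, $[[r_K,r_K]]=0$.

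The second condition is where the compatibility \eqref{compatible-condition} enters, and this is the genuinely new step. Fixing a basis $\{e_i\}$ of $W$ with dual basis $\{u^i\}$ of $W^*$, the identification of $K$ with $\bar K$ reads $\bar K=\sum_i u^i\otimes Ke_i\in W^*\otimes\g\subseteq (\g\oplus W^*)^{\otimes 2}$. Since $R-T^*$ acts as $-T^*$ on the $W^*$-factor and as $R$ on the $\g$-factor, I would compute
\[
\big((R-T^*)\otimes\Id+\Id\otimes(R-T^*)\big)(\bar K)=\sum_i (-T^*u^i)\otimes Ke_i+\sum_i u^i\otimes R(Ke_i).
\]
Reading this element of $W^*\otimes\g$ back as a linear map $W\to\g$, the first sum gives $w\mapsto -K(Tw)$ and the second gives $w\mapsto R(Kw)$, so the total corresponds to $R\circ K-K\circ T$, which vanishes by \eqref{compatible-condition}. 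Hence $\big((R-T^*)\otimes\Id+\Id\otimes(R-T^*)\big)(\bar K)=0$, and since this operator commutes with the flip $\sigma$, the same holds for $r_K=\bar K-\sigma(\bar K)$. This is precisely \eqref{Rey-LIE-Yang-Baxter-2}.

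Combining the two conditions, $r_K$ solves the classical Yang–Baxter equation in $(\g\ltimes_{\rho^*}W^*, R-T^*)$. The main obstacle is purely bookkeeping: correctly tracking how $R-T^*$ acts on the two tensor legs of $\bar K$ (in particular the sign coming from $-T^*$ on the $W^*$-factor) and translating the vanishing condition back into the operator identity $R\circ K=K\circ T$. Once the identification $\bar K=\sum_i u^i\otimes Ke_i$ is in place, the verification is routine.
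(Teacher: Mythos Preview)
Your proposal is correct and follows essentially the same strategy as the paper: invoke Lemma~\ref{lem:rt} for the CYBE condition $[[r_K,r_K]]=0$, then verify the Reynolds compatibility \eqref{Rey-LIE-Yang-Baxter-2} separately using $R\circ K=K\circ T$. The only cosmetic difference is that the paper checks \eqref{Rey-LIE-Yang-Baxter-2} via the pairing $\bar K(\xi+u,\eta+v)$ with elements of $\g^*\oplus W$, whereas you expand $\bar K$ in a basis of $W$; your observation that $(R-T^*)\otimes\Id+\Id\otimes(R-T^*)$ commutes with $\sigma$ is a slight streamlining over the paper's explicit repetition for $\sigma(\bar K)$.
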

\begin{proof}
By Lemma \ref{lem:rt},
  $r_K$ satisfies the classical Yang-Baxter equation in the semi-direct product Lie algebra $\g\ltimes_{\rho^*}W^*$ if and only if $K:W\to\g$ is a relative Rota-Baxter operator on the Lie algebra $\g$ with respect to the representation $(W;\rho)$. Thus, we only need to show that $r_K$ satisfies \eqref{Rey-LIE-Yang-Baxter-2} for the Reynolds operator $R-T^*.$ By \eqref{compatible-condition}, we have
  \begin{eqnarray*}
   && \Big((R-T^*)\otimes \Id+\Id\otimes (R-T^*)\Big) \bar{K} (\xi+u,\eta+v)\\
    &=& \bar{K} ((R^*-T)(\xi+u),\eta+v)+ \bar{K} (\xi+u,(R^*-T)(\eta+v))\\
    &=& \bar{K} (R^*\xi-T u,\eta+v)+ \bar{K} (\xi+u,R^*\eta-T v)\\
    &=&\langle -KTu,\eta\rangle+\langle Ku,R^*\eta\rangle\\
    &=&\langle (-K\circ T+R\circ K)u,\eta\rangle\\
    &=&0.
  \end{eqnarray*}
Similarly, we have
  \begin{eqnarray*}
   && \Big((R-T^*)\otimes \Id+\Id\otimes (R-T^*)\Big) \sigma(\bar{K}) (\xi+u,\eta+v)\\
   &=&-\sigma\Big(\big((R-T^*)\otimes \Id+\Id\otimes (R-T^*)\big) \bar{K} \Big) (\xi+u,\eta+v)\\
   &=&- \Big(\big((R-T^*)\otimes \Id+\Id\otimes (R-T^*)\big) \bar{K} \Big) (\eta+v,\xi+u)\\
       &=&0,
  \end{eqnarray*}
  which implies that
  $$
   \Big((R-T^*)\otimes \Id+\Id\otimes (R-T^*)\Big)r_{K}=0.
  $$
Hence \eqref{Rey-LIE-Yang-Baxter-2} holds. Therefore, $r_K$ is a skew-symmetric solution of the classical Yang-Baxter equation in the Reynolds Lie algebra $(\g\ltimes_{\rho^*}W^*,R-T^*)$.
\end{proof}

\subsection{Reynolds pre-Lie algebras}

A {\bf pre-Lie algebra} $(\g,\{\cdot,\cdot\})$ is a vector space $\g$ equipped with a bilinear bracket $\{\cdot,\cdot\}:\g\otimes\g\rightarrow\g$
such that for any $x,y,z\in \g,$ the associator $(x,y,z)=\{\{x,y\},z\}-\{x,\{y,z\}\}$ is symmetric in $x,y,$ i.e. $(x,y,z)=(y,x,z),$ or equivalently
\begin{eqnarray}\label{pre-Lie-defi}
\{\{x,y\},z\}-\{x,\{y,z\}\}=\{\{y,x\},z\}-\{y,\{x,z\}\}.
\end{eqnarray}

\begin{defi}
Let $(\g,\{\cdot,\cdot\})$ be a pre-Lie algebra. A linear operator $R:\g\rightarrow \g$ is called
{\bf a Reynolds operator} if $R$ satisfies
   \begin{eqnarray}\label{pre-Reynolds-operator}
 \{Rx,Ry\}=R\Big(\{Rx,y\}+\{x,Ry\}-\{Rx,Ry\}\Big), \quad \forall x, y \in \g.
\end{eqnarray}
Moreover, a pre-Lie algebra $(\g,\{\cdot,\cdot\})$ equipped with a Reynolds operator $R$ is called a {\bf Reynolds pre-Lie algebra},
which is denoted by $(\g,\{\cdot,\cdot\},R).$
\end{defi}

Then following conclusion is obvious.
\begin{pro}\label{Reynolds-Pre-conclusion3}
Let $(\g,\{\cdot,\cdot\},R)$ be a Reynolds pre-Lie algebra. Then
\begin{itemize}
  \item[{\rm (i)}]  $(\g, \Courant{\cdot,\cdot}_{\g},R)$
is a Reynolds Lie algebra, where the bracket $\Courant{\cdot,\cdot}_{\g}$ is given by
\begin{eqnarray*}
\Courant{x,y}_{\g}=\{x,y\}-\{y,x\},\quad \forall x,y\in \g.
\end{eqnarray*}
We call  $(\g,\Courant{\cdot,\cdot}_{\g},R)$ is the {\bf sub-adjacent Reynolds Lie algebra} of  $(\g,\{\cdot,\cdot\},R)$, and $(\g,\{\cdot,\cdot\},R)$ is called a {\bf compatible Reynolds pre-Lie algebra} of $(\g, \Courant{\cdot,\cdot}_{\g},R).$
 \item[{\rm (ii)}] Define a linear map $L:\g\rightarrow \gl(\g)$ by
$ L(x)y=\{x,y\},$ for all $x,y \in \g.$ Then $(\g;R,L)$ is a representation of the  Reynolds Lie algebra $(\g, \Courant{\cdot,\cdot}_{\g},R)$.
\item[{\rm (iii)}] The identity map $\Id:\g\rightarrow\g$ is a relative Rota-Baxter operator on the Reynolds Lie algebra $(\g,\Courant{\cdot,\cdot}_{\g},R)$ with respect to the representation $(\g;R,L).$
\end{itemize}
\end{pro}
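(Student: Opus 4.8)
The plan is to dispatch all three parts by reducing each to the defining pre-Lie Reynolds identity \eqref{pre-Reynolds-operator}, together with the classical passage from a pre-Lie algebra to its sub-adjacent Lie algebra. None of the three verifications requires any new idea; they are all direct substitutions into identities already set up in the excerpt.

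For part (i) I would first recall that for an arbitrary pre-Lie algebra the commutator $\Courant{x,y}_{\g}=\{x,y\}-\{y,x\}$ satisfies the Jacobi identity, a standard consequence of the left-symmetry \eqref{pre-Lie-defi} of the associator; this produces the underlying Lie algebra $(\g,\Courant{\cdot,\cdot}_{\g})$. To see that $R$ is then a Reynolds operator in the sense of \eqref{Reynolds-operator}, I would apply \eqref{pre-Reynolds-operator} separately to $\{Rx,Ry\}$ and to $\{Ry,Rx\}$ and subtract. Because $R$ is linear and commutes with the subtraction, regrouping the resulting terms into the three commutators $\{Rx,y\}-\{y,Rx\}$, $\{x,Ry\}-\{Ry,x\}$ and $\{Rx,Ry\}-\{Ry,Rx\}$ reassembles precisely $R\bigl(\Courant{Rx,y}_{\g}+\Courant{x,Ry}_{\g}-\Courant{Rx,Ry}_{\g}\bigr)$, which is the Reynolds identity for $\Courant{\cdot,\cdot}_{\g}$.

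For part (ii) the map $L$ is a representation of $(\g,\Courant{\cdot,\cdot}_{\g})$ via the familiar identity $L(\Courant{x,y}_{\g})=L(x)L(y)-L(y)L(x)$, once more a reformulation of left-symmetry. It then remains only to verify the representation-level Reynolds compatibility \eqref{representation-Reynolds-Liealg} for the pair $(R,L)$, which, after substituting $\rho=L$ and $T=R$, reads $\{Rx,Ru\}=R\bigl(\{x,Ru\}+\{Rx,u\}-\{Rx,Ru\}\bigr)$; this is exactly \eqref{pre-Reynolds-operator} with $y$ renamed to $u$, so nothing further is needed.

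For part (iii) I take $K=\Id$ with $W=\g$, $\rho=L$ and $T=R$. The operator equation \eqref{relative-RBO} becomes $\Courant{u,v}_{\g}=L(u)v-L(v)u=\{u,v\}-\{v,u\}$, which is the very definition of $\Courant{\cdot,\cdot}_{\g}$, while the compatibility \eqref{compatible-condition} reduces to $R\circ\Id=\Id\circ R$, which is trivial. Since every step is a substitution into an identity already established, there is no genuine obstacle; the only place demanding a little care is the sign bookkeeping in part (i), where the pre-Lie Reynolds identity must be antisymmetrized correctly before recognizing the Lie-algebra Reynolds identity.
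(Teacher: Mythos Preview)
Your proposal is correct and follows essentially the same route as the paper's own proof: in each part you reduce the required identity directly to the pre-Lie Reynolds relation \eqref{pre-Reynolds-operator}, antisymmetrizing for (i), reading \eqref{representation-Reynolds-Liealg} as \eqref{pre-Reynolds-operator} for (ii), and invoking the definition of $\Courant{\cdot,\cdot}_{\g}$ for (iii). The paper does precisely the same substitutions, so there is no meaningful difference in approach.
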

\begin{proof}{\rm (i)}It is known that $(\g,\Courant{\cdot,\cdot}_{\g})$ is a Lie algebra. For all $x,y\in \g,$ we have
 \begin{eqnarray*}
 \Courant{Rx,Ry}_{\g}&=&\{Rx,Ry\}-\{Ry,Rx\}\\
 &=&R(\{Rx,y\}+\{x,Ry\}-\{Rx,Ry\})-R(\{Ry,x\}+\{y,Rx\}-\{Ry,Rx\})\\
 &=&R(\Courant{Rx,y}_{\g}+\Courant{x,Ry}_{\g}-\Courant{Rx,Ry}_{\g}).
 \end{eqnarray*}
 Therefore, $(\g, \Courant{\cdot,\cdot}_{\g},R)$ is a Reynolds Lie algebra.
 \item[{\rm (ii)}] For all $x,y,z\in\g,$ by \eqref{pre-Lie-defi},we have $L(\Courant{x,y}_{\g})(z)=[L(x),L(y)](z)$ and
  \begin{eqnarray*}
 L(Rx)(Ry)=\{Rx,Ry\}&=&R(\{Rx,y\}+\{x,Ry\}-\{Rx,Ry\})\\
 &=&R(L(x)(Ry)+L(Rx)(y)-L(Rx)(Ry)),
  \end{eqnarray*}
  which implies that $(\g;R,L)$ is a representation of the  Reynolds Lie algebra $(\g, \Courant{\cdot,\cdot}_{\g},R)$.
\item[{\rm (iii)}]It follows from the definition of the sub-adjacent Reynolds Lie algebra directly.
\end{proof}

\begin{pro}\label{RRB-Reynolds-Pre-Lie}
Let $K:W\rightarrow\g$ be a relative Rota-Baxter operator on a Reynolds Lie algebra $(\g,[\cdot,\cdot]_{\g},R)$   with respect to the representation $(W;T,\rho).$
Then $(W,\{\cdot,\cdot\}_{K},T)$ is  a Reynolds pre-Lie algebra, where $\{\cdot,\cdot\}_{K}$ is given by
\begin{eqnarray}\label{Rey-pre}
\{u,v\}_{K}=\rho(Ku)v,\quad \forall u,v\in W.
\end{eqnarray}
\end{pro}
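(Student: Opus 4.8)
The plan is to verify separately the two properties that make $(W,\{\cdot,\cdot\}_{K},T)$ a Reynolds pre-Lie algebra: first that $(W,\{\cdot,\cdot\}_{K})$ satisfies the pre-Lie identity \eqref{pre-Lie-defi}, and then that $T$ is a Reynolds operator on it in the sense of \eqref{pre-Reynolds-operator}.

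For the pre-Lie identity I would compute the associator directly from \eqref{Rey-pre}. Using $\{u,v\}_{K}=\rho(Ku)v$, one finds
\[
\{\{u,v\}_{K},w\}_{K}-\{u,\{v,w\}_{K}\}_{K}=\rho\big(K(\rho(Ku)v)\big)w-\rho(Ku)\rho(Kv)w,
\]
and antisymmetrizing in $u,v$ gives
\[
\rho\Big(K\big(\rho(Ku)v-\rho(Kv)u\big)-[Ku,Kv]_{\g}\Big)w,
\]
where the commutator term $\rho([Ku,Kv]_{\g})w$ appears because $\rho$ is a representation of $\g$. This vanishes precisely by the relative Rota-Baxter relation \eqref{relative-RBO}. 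This is the classical fact that a relative Rota-Baxter operator induces a pre-Lie structure (see \cite{Ku,Bai}), so I would either cite it or record this short computation.

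For the Reynolds condition the key observation is that the compatibility condition \eqref{compatible-condition} lets me rewrite $K(Tu)=R(Ku)$, converting an occurrence of $T$ inside $K$ into an occurrence of $R$ on the Lie-algebra side. Starting from the left-hand side,
\[
\{Tu,Tv\}_{K}=\rho\big(K(Tu)\big)(Tv)=\rho\big(R(Ku)\big)(Tv),
\]
I would apply the representation-of-Reynolds-Lie-algebra identity \eqref{representation-Reynolds-Liealg} with $x=Ku$ and module vector $v$, obtaining
\[
\rho\big(R(Ku)\big)(Tv)=T\Big(\rho(Ku)(Tv)+\rho\big(R(Ku)\big)(v)-\rho\big(R(Ku)\big)(Tv)\Big).
\]
Translating the three terms back via \eqref{Rey-pre} together with $R(Ku)=K(Tu)$, they become exactly $\{u,Tv\}_{K}$, $\{Tu,v\}_{K}$ and $\{Tu,Tv\}_{K}$, so the right-hand side is $T\big(\{Tu,v\}_{K}+\{u,Tv\}_{K}-\{Tu,Tv\}_{K}\big)$, which is precisely \eqref{pre-Reynolds-operator}.

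I expect no serious obstacle here: the proof is a direct translation once one notices that \eqref{compatible-condition} interchanges $T$ acting inside $K$ with $R$ acting on $\g$, so that the abstract Reynolds-representation identity \eqref{representation-Reynolds-Liealg} becomes exactly the pre-Lie Reynolds identity. The only point requiring care is bookkeeping of which argument of the bracket carries $Ku$ versus the module vector, but this is entirely mechanical.
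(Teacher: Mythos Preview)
Your proposal is correct and follows essentially the same approach as the paper: the paper cites the well-known fact that $(W,\{\cdot,\cdot\}_K)$ is a pre-Lie algebra (which you optionally verify), and then carries out exactly the computation you describe for the Reynolds condition, using $K\circ T=R\circ K$ to rewrite $\{Tu,Tv\}_K=\rho(R(Ku))(Tv)$ and applying \eqref{representation-Reynolds-Liealg}.
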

\begin{proof}
It is well known that if $K$ is a relative Rota-Baxter operator on a Lie algebra $(\g,[\cdot,\cdot]_{\g})$ with respect to a representation $(W;\rho),$ then $(W,\{\cdot,\cdot\}_{K})$ is a pre-Lie algebra.

By $R\circ K=K\circ T$ and the fact that $(W;T,\rho)$ is a representation, for all $u,v\in W,$ we have
\begin{eqnarray*}
\{Tu,Tv\}_{K}=\rho(K(Tu))(Tv)&=&\rho(R(Ku))(Tv)\\
&=&T(\rho(Ku)(Tv))+T(\rho(R(Ku))v)-T(\rho(R(Ku)(Tv)))\\
&=&T(\{u,Tv\}_{K}+\{Tu,v\}_{K}-\{Tu,Tv\}_{K}).
\end{eqnarray*}
Hence $T$ is a Reynolds operator on the pre-Lie algebra $(W,\{\cdot,\cdot\}_{K}).$
\end{proof}

\begin{pro}
  Let $(\g,[\cdot,\cdot]_{\g},R)$ be a Reynolds Lie algebra. Then there exists a compatible
  Reynolds pre-Lie algebra $(\g,\{\cdot,\cdot\},R)$
  if and only if there exists a representation $(W;T,\rho)$ of the Reynolds
  Lie algebra $(\g,[\cdot,\cdot]_{\g},R)$ and an invertible relative
  Rota-Baxter operator $K:W\rightarrow\g$ on $(\g,[\cdot,\cdot]_{\g},R)$
  with respect to the representation $(W;T,\rho)$. Furthermore, the compatible Reynolds pre-Lie algebra structure $(\g,\{\cdot,\cdot\}_{K},R)$ on $\g$ is given by
\begin{eqnarray*}
  \{x,y\}_{K}:=K(\rho(x)K^{-1}y),\quad \forall x,y\in \g.
\end{eqnarray*}
\end{pro}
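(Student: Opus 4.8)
The plan is to establish the two implications separately and then read off the explicit formula, using Propositions \ref{Reynolds-Pre-conclusion3} and \ref{RRB-Reynolds-Pre-Lie} as the two halves of the correspondence. For the ``only if'' direction I would start from a compatible Reynolds pre-Lie algebra $(\g,\{\cdot,\cdot\},R)$ of $(\g,[\cdot,\cdot]_\g,R)$. By Proposition \ref{Reynolds-Pre-conclusion3} the sub-adjacent bracket $\Courant{\cdot,\cdot}_\g$ coincides with $[\cdot,\cdot]_\g$, the pair $(\g;R,L)$ with $L(x)y=\{x,y\}$ is a representation of $(\g,[\cdot,\cdot]_\g,R)$, and the identity map $\Id:\g\to\g$ is a relative Rota-Baxter operator with respect to $(\g;R,L)$. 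Since $\Id$ is trivially invertible, the choice $W=\g$, $T=R$, $\rho=L$, $K=\Id$ furnishes the required invertible relative Rota-Baxter operator, and here the stated formula degenerates to $K(\rho(x)K^{-1}y)=L(x)y=\{x,y\}$, recovering the original product.

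For the ``if'' direction, let $K:W\to\g$ be an invertible relative Rota-Baxter operator with respect to $(W;T,\rho)$. By Proposition \ref{RRB-Reynolds-Pre-Lie}, $(W,\{\cdot,\cdot\}_K,T)$ is a Reynolds pre-Lie algebra with $\{u,v\}_K=\rho(Ku)v$. I would then transport this structure to $\g$ along the linear isomorphism $K$: setting $\{x,y\}_K:=K\big(\{K^{-1}x,K^{-1}y\}_K\big)=K(\rho(x)K^{-1}y)$ makes $K$ a pre-Lie algebra isomorphism, so $(\g,\{\cdot,\cdot\}_K)$ is automatically a pre-Lie algebra and the displayed formula holds. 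The Reynolds compatibility \eqref{compatible-condition} reads $R\circ K=K\circ T$, hence $R=K\circ T\circ K^{-1}$ is the image of the Reynolds operator $T$ under the same isomorphism; since the defining identity \eqref{pre-Reynolds-operator} is preserved by pre-Lie isomorphisms, $R$ is a Reynolds operator on $(\g,\{\cdot,\cdot\}_K)$.

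It remains to check compatibility, i.e. that the sub-adjacent Lie bracket of $\{\cdot,\cdot\}_K$ equals $[\cdot,\cdot]_\g$. Writing $x=Ku$ and $y=Kv$, the sub-adjacent bracket is $K(\rho(Ku)v)-K(\rho(Kv)u)=K\big(\rho(Ku)v-\rho(Kv)u\big)$, which by the relative Rota-Baxter identity \eqref{relative-RBO} equals $[Ku,Kv]_\g=[x,y]_\g$. This single computation is the only step with genuine content; everything else is formal transport of the pre-Lie and Reynolds structures under the isomorphism $K$. Consequently $(\g,\{\cdot,\cdot\}_K,R)$ is a compatible Reynolds pre-Lie algebra of $(\g,[\cdot,\cdot]_\g,R)$. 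I expect the main (essentially the only) obstacle to be exactly this compatibility verification, since it is the point at which the defining relative Rota-Baxter equation is invoked rather than a generic transport argument; the rest of the argument reduces to checking that isomorphisms carry pre-Lie and Reynolds data correctly.
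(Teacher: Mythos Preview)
Your proposal is correct and follows essentially the same approach as the paper: both directions invoke Propositions \ref{Reynolds-Pre-conclusion3} and \ref{RRB-Reynolds-Pre-Lie} exactly as you do, and the ``if'' direction transports the Reynolds pre-Lie structure from $W$ to $\g$ along the invertible $K$. The only minor differences are that the paper verifies the Reynolds identity on $\g$ by a direct computation using \eqref{representation-Reynolds-Liealg} and $R\circ K=K\circ T$ rather than your transport-of-structure argument, and that you spell out the sub-adjacent compatibility check via \eqref{relative-RBO} which the paper asserts without detail.
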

\begin{proof}
Let $K:W\rightarrow\g$ be an invertible relative Rota-Baxter operator on
$(\g,[\cdot,\cdot]_{\g},R)$ with respect to a representation $(W;T,\rho).$ By Proposition \ref{RRB-Reynolds-Pre-Lie},
there exist a Reynolds pre-Lie algebra $(W,\{\cdot,\cdot\}_{K},T)$ on $W,$ where $\{\cdot,\cdot\}_{K}$ is given by \eqref{Rey-pre}. Since $K$ is an invertible relative Rota-Baxter operator, for all $x,y\in \g,$
we obtain that
\begin{eqnarray*}
\{x,y\}_{K}:=K\{K^{-1}x,K^{-1}y\}_{K}=K(\rho(x)K^{-1}y)
\end{eqnarray*}
is a pre-Lie algebra on $\g.$ Moreover, we have
\begin{eqnarray*}
\{Rx,Ry\}_{K}&=&K(\rho(Rx)(K^{-1}(Ry)))\\
&=&K(\rho(Rx)(T(K^{-1}y)))\\
&=&K\Big(T(\rho(x)(T(K^{-1}y)))+T(\rho(Rx)(K^{-1}y))-T(\rho(Rx)(T(K^{-1}y)))\Big)\\
&=&R(\{Rx,y\}_{K}+\{x,Ry\}_{K}-\{Rx,Ry\}_{K}).
\end{eqnarray*}
Thus, $(\g,\{\cdot,\cdot\}_{K},R)$ is a Reynolds pre-Lie algebra, whose subadjacent Reynolds Lie algebra is exactly $(\g,[\cdot,\cdot]_{\g},R)$.

On the other hand, let $(\g,\{\cdot,\cdot\},R)$ be a compatible Reynolds pre-Lie algebra of the Reynolds Lie algebra $(\g,[\cdot,\cdot]_{\g},R).$ By Proposition \ref{Reynolds-Pre-conclusion3}, $(\g;R,L)$ is a representation of the Reynolds Lie algebra $(\g,[\cdot,\cdot]_{\g},R).$ Moreover, the identity map $\Id:\g\rightarrow\g$ is a relative Rota-Baxter operator on  $(\g,[\cdot,\cdot]_{\g},R)$ with respect to the representation $(\g;R,L).$ The proof is finished.
\end{proof}

\begin{thm}
Let $(\g,\{\cdot,\cdot\},R)$ be a Reynolds pre-Lie algebra. Then
\begin{eqnarray*}
  r:=\sum^{n}_{i=1}(e_i\otimes e_i^*-e_i^*\otimes e_i)
\end{eqnarray*}
is a skew-symmetric solution of the classical Yang-Baxter equation in the semi-direct product Reynolds Lie algebra $(\g\ltimes_{L^*}\g^*,R-R^*),$ where $\{e_1,\cdots,e_n\}$ is a basis of $\g$ and $\{e^*_1,\cdots,e^*_n\}$ is its dual basis.
\end{thm}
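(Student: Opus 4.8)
The plan is to recognize $r$ as (up to an overall sign) the canonical skew-symmetric solution $r_K=\overline{K}-\sigma(\overline{K})$ furnished by Theorem \ref{skew-solution-Reynolds} when $K$ is taken to be the identity operator of $\g$. Indeed, all the data needed to invoke that theorem has already been assembled in Proposition \ref{Reynolds-Pre-conclusion3}: the Reynolds pre-Lie algebra $(\g,\{\cdot,\cdot\},R)$ gives rise to the sub-adjacent Reynolds Lie algebra $(\g,\Courant{\cdot,\cdot}_{\g},R)$, to the representation $(\g;R,L)$ with $L(x)y=\{x,y\}$, and to the statement that $\Id:\g\to\g$ is a relative Rota-Baxter operator on $(\g,\Courant{\cdot,\cdot}_{\g},R)$ with respect to $(\g;R,L)$.

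First I would apply Theorem \ref{skew-solution-Reynolds} with the Reynolds Lie algebra $(\g,\Courant{\cdot,\cdot}_{\g},R)$, the representation $(W;T,\rho)=(\g;R,L)$, and the relative Rota-Baxter operator $K=\Id$. This yields that $r_{\Id}=\overline{\Id}-\sigma(\overline{\Id})$ is a skew-symmetric solution of the classical Yang-Baxter equation in the semi-direct product Reynolds Lie algebra $\g\ltimes_{\rho^*}W^*=\g\ltimes_{L^*}\g^*$ with Reynolds operator $R-T^*=R-R^*$, which is exactly the Reynolds Lie algebra $(\g\ltimes_{L^*}\g^*,R-R^*)$ appearing in the statement. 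The only remaining task is to compute $\overline{\Id}$ explicitly. Under the identification $\Hom(\g,\g)\cong\g^*\otimes\g\subseteq\otimes^2(\g\oplus\g^*)$ used to define $\overline{K}$, the identity corresponds to $\sum_{i=1}^n e_i^*\otimes e_i$: for all $\xi+u,\eta+v\in\g^*\oplus\g$ one verifies $\big(\sum_i e_i^*\otimes e_i\big)(\xi+u,\eta+v)=\sum_i\langle u,e_i^*\rangle\langle e_i,\eta\rangle=\langle u,\eta\rangle=\langle\Id(u),\eta\rangle=\overline{\Id}(\xi+u,\eta+v)$. Hence $\overline{\Id}=\sum_{i=1}^n e_i^*\otimes e_i$ and $r_{\Id}=\sum_{i=1}^n(e_i^*\otimes e_i-e_i\otimes e_i^*)=-r$.

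Finally I would note that passing from $r_{\Id}$ to $r=-r_{\Id}$ preserves being a solution: the classical Yang-Baxter expression $[[r,r]]$ in \eqref{LIE-Yang-Baxter} is homogeneous of degree two, so it is insensitive to the sign, while the Reynolds compatibility \eqref{Rey-LIE-Yang-Baxter-2} for $R-R^*$ is linear in $r$, so it also survives multiplication by $-1$; both hold for $r_{\Id}$ by Theorem \ref{skew-solution-Reynolds}, hence for $r$. I do not expect any genuine obstacle here: the substantive Reynolds condition is already guaranteed by Theorem \ref{skew-solution-Reynolds} (its proof uses only the compatibility $R\circ\Id=\Id\circ R$, i.e. \eqref{compatible-condition} for $K=\Id$ and $T=R$), and the sole point requiring care is the bookkeeping in the identification $\overline{\Id}=\sum_i e_i^*\otimes e_i$ and the resulting overall sign.
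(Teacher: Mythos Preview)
Your proposal is correct and follows essentially the same route as the paper: invoke Proposition~\ref{Reynolds-Pre-conclusion3}(iii) to see that $\Id$ is a relative Rota-Baxter operator on the sub-adjacent Reynolds Lie algebra with respect to $(\g;R,L)$, then apply Theorem~\ref{skew-solution-Reynolds}. You are in fact more careful than the paper on one point: you explicitly identify $\overline{\Id}=\sum_i e_i^*\otimes e_i$, observe that $r_{\Id}=-r$, and justify why the overall sign is harmless for both \eqref{LIE-Yang-Baxter} and \eqref{Rey-LIE-Yang-Baxter-2}; the paper's proof simply asserts the conclusion for $r$ without commenting on this sign.
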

\begin{proof}
  Since $(\g,\{\cdot,\cdot\},R)$ \mg{is} a Reynolds pre-Lie algebra, then $\Id:\g\rightarrow\g$ is a relative Rota-Baxter operator on the sub-adjacent Reynolds Lie algebra $(\g,\Courant{\cdot,\cdot}_{\g},R)$ with respect to the representation $(\g;R,L).$ By Theorem \ref{skew-solution-Reynolds}, $r=\sum^{n}_{i=1}(e_i\otimes e_i^*-e_i^*\otimes e_i)$ is a skew-symmetric solution of the classical Yang-Baxter equation in the semi-direct product Reynolds Lie algebra $(\g\ltimes_{L^*}\g^*,R-R^*).$
\end{proof}

\begin{ex}
Consider a $2$-dimensional pre-Lie algebra  $(\g,\{\cdot,\cdot\})$ from \rm{(\cite{Bai-Meng})}. It has a basis $\{e_1,e_2\}$ with the following table of multiplication:
  \begin{eqnarray*}
   \{e_1,e_2\} =e_1,\quad \{e_2,e_2\} =e_2,\quad \{e_1,e_1\}=\{e_2,e_1\}=0.
  \end{eqnarray*}
It is straightforward to check that for any $\alpha,\beta\in \mathbb F$, a map $R:\g\rightarrow \g$ defined as $R(e_1)=\alpha e_1$, $R(e_2)=\beta e_1+e_2$ is a Reynolds operator on the pre-Lie algebra $(\g,\{\cdot,\cdot\})$.
By Proposition \ref{Reynolds-Pre-conclusion3}, we have the sub-adjacent Reynolds  Lie algebra  $(\g,\Courant{\cdot,\cdot}_{\g},R)$, where non-zero products are given by
$
   \Courant{e_1,e_2}_{\g}=e_1.
$
The identity map $\Id:\g\rightarrow\g$ is a relative Rota-Baxter operator on $(\g,\Courant{\cdot,\cdot}_{\g},R)$  with respect to the representation $(\g;R,L),$
where the  map $L:\g\rightarrow\gl(\g)$ is defined by
$$
L_{e_1}(e_1)=0,\ L_{e_1}(e_2)=e_1,\ L_{e_2}(e_1)=0,\ L_{e_2}(e_2)=e_2.
$$
Let $\{e_1^*,e_2^*\}$ be the dual basis of the dual vector space $\g^*.$
Then the dual representation $L^*:\g\rightarrow\gl(\g^*)$  with respect to the basis $\{e_1^*, e_2^*\}$ is given by
$$
L^*_{e_1}(e_1^*)=-e_2^*,\ L^*_{e_1}(e_2^*)=0,\ L^*_{e_2}(e_1^*)=0,\ L^*_{e_2}(e_2^*)=-e_2^*.
$$
Consider the dual representation $(\g^*;R^*,L^*)$ of the Reynolds Lie algebra $(\g,\Courant{\cdot,\cdot}_{\g},R),$ where $R^*$ is given by $R^*(e_1^*)=\alpha e_1^*+\beta e_2^*, R^*(e_2^*)=e_2^*.$
By Proposition \ref{semi-direct-ReyLiealge}, $(\g\ltimes_{L^*}\g^*,\Courant{\cdot,\cdot}_{\ltimes},R-R^*)$ is a $4$-dimensional Reynolds Lie algebra, where  the table of multiplication  with respect to the basis $\{e_1,e_2,e_1^*,e_2^*\}$ is
$\left(\begin{array}{cccc}
0&e_1&-e_2^*&0\\
-e_1&0&0&-e_2^*\\
e_2^*& 0&0&0\\
0&e_2^*&0&0
\end{array}\right)$.
Then $r=\sum^{2}_{i=1}(e_i\otimes e_i^*-e_i^*\otimes e_i)$  is a skew-symmetric solution of the classical Yang-Baxter equation in the above Reynolds Lie algebra.
By Theorem \ref{Rey-bialgebra-cybe}, there is a Reynolds Lie bialgebra $(\g\ltimes_{L^*}\g^*,\Delta,R-R^*),$ where
\begin{equation*}
\begin{matrix}
\begin{aligned}
\Delta(e_1)&=0,\\
\Delta(e_1^*)&=e_2^*\otimes e_1^*+e_1^*\otimes e_2^*,\\
(R-R^*)(e_1)&=a_{11}e_1,\\
(R-R^*)(e_1^*)&=-a_{11}e_1^*-a_{12}e_2^*,
\end{aligned}
&
\begin{aligned}
\quad \Delta(e_2)&=e_2^*\otimes e_2+e_1^*\otimes e_1-e_2\otimes e_2^*-e_1\otimes e_1^*,\\
\quad \Delta(e_2^*)&=2(e_2^*\otimes e_2^*),\\
\quad (R-R^*)(e_2)&=a_{12}e_1+e_2,\\
\quad (R-R^*)(e_2^*)&=-e_2^*.
\end{aligned}
\end{matrix}
\end{equation*}
\end{ex}

{\bf Acknowledgements.} The first author was supported by NSFC (12301034, 12461004, W2412041). The second author was supported by the Russian Science Foundation (project 25-41-
00005).  We give warmest thanks to Professor Yunhe Sheng for valuable suggestions and  helpful comments.

\end{document}